\theoremstyle{plain}
\newtheorem{lemma}{Lemma}
\newtheorem{proposition}{Proposition}
\newtheorem{theorem}{Theorem}
\newtheorem{conjecture}{Conjecture}
\newtheorem{remark}{Remark}
\newcommand{\Mod}[1]{\, (\text{mod}\ #1)}
\newcommand{\Res}{\mathop{\mathrm{Res}}}
\newcommand{\sumd}{\mathop{\sum\,^d}}
\newcommand{\mm}{\mathfrak{m}}
\newcommand{\nn}{\mathfrak{n}}
\begin{document}

\title{The variance of divisor sums in arithmetic progressions}
\author{Brad Rodgers, Kannan Soundararajan}
\address{Department of Mathematics, University of Michigan, Ann Arbor, MI 48109}
\email{rbrad@umich.edu}
\address{Department of Mathematics, Stanford University, Stanford CA 94305}
\email{ksound@stanford.edu} 
\date{}

\begin{abstract}
We study the variance of sums of the $k$-fold divisor function $d_k(n)$ over sparse arithmetic progressions, with averaging over both residue classes and moduli. In a restricted range, we confirm an averaged version of a recent conjecture about the asymptotics of this variance. This result is closely related to moments of Dirichlet $L$-functions and our proof relies on the asymptotic large sieve.
\end{abstract}

\keywords{}

\maketitle

\section{Introduction}
\label{sec:intro}

Consider the $k$-fold divisor function,
$$
d_k(n):= \#\{(a_1,...,a_k) \in \mathbb{N}^k:\, a_1\cdots a_k = n\},
$$
which has the Dirichlet series
\begin{equation}
\label{DivisorGen}
\sum_n \frac{d_k(n)}{n^s} = \zeta^k(s).
\end{equation}
For $(b,q)=1$, the extent to which the sums
\begin{equation}
\label{APsum}
\sum_{\substack{n \equiv b \Mod q \\ n \leq X}} d_k(n)
\end{equation}
are approximated by the mean value
$$
\frac{1}{\phi(q)}\sum_{\substack{(n,q)=1 \\ n \leq X}} d_k(n),
$$
has been extensively studied. To mention examples from a long line of work, Heath-Brown \cite{He1}, Hooley \cite{Ho}, and Friedlander and Iwaniec \cite{FrIw} have obtained uniform estimates for various $k$. Our purpose will be to study the variance of these sums as $b$ varies. In the case that $k=2$ this problem has been studied in various ways (see e.g. \cite{Bl, Mo, FoGaKoMi}) with in particular Lau and Zhao \cite{LaZh} proving a pleasant and simple asymptotic formula for the variance with $q$ and $X$ growing at different rates.

In the case that $k$ is larger, a function field variant of this problem has recently been considered by Keating, the first author, Roditty-Gershon, and Rudnick \cite{KeRoRoRu}. Their result suggests the following conjecture over the integers.
\begin{conjecture}
\label{APConj}
For $X,q\rightarrow\infty$ in such a way that $\frac{\log X}{\log q} \rightarrow c \in (0,k)$, we have
\begin{align}
\label{sharp_variance}
v_k(q;X) &:= \sum_{\substack{1\leq a \leq q \\ (a,q)=1}} \Big| \sum_{\substack{n \equiv a \Mod q \\ n \leq X}} d_k(n) - \frac{1}{\phi(q)} \sum_{\substack{(n,q)=1 \\ n \leq X}} d_k(n)\Big|^2\\
&\notag \sim a_k(q) \gamma_k(c) X(\log q)^{k^2-1},
\end{align}
where $a_k(q)$ is the arithmetic constant
\begin{equation}
\label{constant_q}
a_k(q):= \lim_{s\rightarrow 1^+} (s-1)^{k^2} \sum_{\substack{n\geq 1 \\ (n,q)=1}} \frac{d_k(n)^2}{n^s},
\end{equation}
and $\gamma_k(c)$ is a piecewise polynomial of degree $k^2-1$ that is positive for $c \in (0,k)$ and is described in more detail below.
\end{conjecture}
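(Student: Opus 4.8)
The plan is to pass to Dirichlet characters and reduce the variance to a second moment of twisted divisor sums, and thence to a divisor correlation problem. For $(a,q)=1$, orthogonality gives
\[
\sum_{\substack{n\equiv a\Mod q\\ n\le X}}d_k(n)-\frac{1}{\phi(q)}\sum_{\substack{(n,q)=1\\ n\le X}}d_k(n)=\frac{1}{\phi(q)}\sum_{\chi\ne\chi_0}\bar\chi(a)\,D_k(X,\chi),\qquad D_k(X,\chi)\assign\sum_{n\le X}d_k(n)\chi(n),
\]
and summing $|\cdot|^2$ over $a$ collapses, by orthogonality again, to
\[
v_k(q;X)=\frac{1}{\phi(q)}\sum_{\chi\ne\chi_0}\bigl|D_k(X,\chi)\bigr|^2=\sum_{\substack{m\equiv n\Mod q\\ m,n\le X,\ (mn,q)=1}}d_k(m)d_k(n)\;-\;\frac{1}{\phi(q)}\Bigl(\sum_{\substack{(n,q)=1\\ n\le X}}d_k(n)\Bigr)^2.
\]
So everything comes down to an asymptotic evaluation of the divisor correlation sum on the right; note that the subtracted principal‑character term is not a negligible error once $c\ge 1$ (it can be as large as $X^{2-1/c+o(1)}$), so the cancellation it effects is itself part of the mechanism producing the comparatively small main term $\asymp X(\log q)^{k^2-1}$.

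The first step is to split off the diagonal $m=n$, which contributes $\sum_{n\le X,\ (n,q)=1}d_k(n)^2$. A Selberg--Delange/contour argument evaluates this as $\sim \tfrac{a_k(q)}{\Gamma(k^2)}\,X(\log X)^{k^2-1}$, and since $\log X\sim c\log q$ this already produces the predicted shape \eqref{sharp_variance} with $\gamma_k(c)=c^{k^2-1}/\Gamma(k^2)$ on the interval $(0,1)$; there the subtracted term is a genuine power saving and no off‑diagonal terms occur at all, so the conjecture is established for $c\in(0,1)$ and every $k$. For $c\ge 1$ one must handle $m=n+qh$ with $1\le|h|\le X/q$, i.e. the family of shifted convolution sums $\sum_{n\le X}d_k(n)d_k(n+qh)$, evaluated with a true asymptotic and uniformly in the shift $qh$, then summed over $h$ and over divisibility conditions modulo $q$. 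Each such sum is expected to have a main term equal to a singular series in the shift times a polynomial of degree $2k-2$ in $\log X$; assembling these over $h$, and carefully tracking the interplay among the singular series, the length $X/q$ of the $h$‑range, the coprimality to $q$, and the constraint $X=q^{c+o(1)}$ — together with the cancellation against the principal‑character term — is precisely what turns the $\log X$ polynomials into the piecewise polynomial $\gamma_k(c)$, the breakpoints at $c=1,2,\dots,k-1$ arising as new contributions switch on when $X/q$ crosses successive powers of $q$. Checking that this bookkeeping reproduces exactly the piecewise polynomial of the function‑field heuristic of \cite{KeRoRoRu} — equivalently, the CFKRS/ratios‑conjecture prediction for $\phi(q)^{-1}\sum_{\chi}|D_k(X,\chi)|^2$, whence the link to moments of Dirichlet $L$‑functions — is a secondary, essentially combinatorial, obstacle.

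The principal obstacle is the analytic one above: an asymptotic for the $k$‑fold shifted divisor sum $\sum_{n\le X}d_k(n)d_k(n+qh)$, uniform enough in $qh$ to be summed over $h$ up to $X/q\asymp q^{c-1}$. For $k=2$ this is the classical binary additive divisor problem, and the requisite uniformity is available through Kloosterman/spectral input; inserting it into the scheme recovers Conjecture~\ref{APConj} for $k=2$ on all of $(0,2)$, in line with the theorem of Lau and Zhao \cite{LaZh}. For $k\ge 3$, by contrast, no such asymptotic is known with anything approaching the needed uniformity — this is the higher additive divisor problem — and it is exactly this gap that leaves Conjecture~\ref{APConj} open in that range. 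Absent a breakthrough there, the realistic route is to interpose an additional average over the moduli $q$ in a dyadic interval and bring in the asymptotic large sieve, which bypasses the individual shifted convolution at the price of shrinking the admissible range of $c$.
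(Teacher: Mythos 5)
You were asked about Conjecture \ref{APConj}, which the paper itself does not prove (and neither do you); your proposal is, quite properly, a reduction plus a program, and as such it is accurate and matches the paper's own framing. The orthogonality identity $v_k(q;X)=\phi(q)^{-1}\sum_{\chi\neq\chi_0}|D_k(X,\chi)|^2$, rewritten as the congruence correlation sum minus the principal-character square, is exactly the decomposition the paper takes as its starting point (the split into $\mathcal{A}_k-\mathcal{B}_k$ in Section \ref{sec:decomp}, before averaging over $q$). Your observation that for $c\in(0,1)$ only the diagonal survives and yields $\gamma_k(c)=c^{k^2-1}/(k^2-1)!$ agrees with the paper's remark following Lemma \ref{small_c} (stated there for smoothed cutoffs; for the sharp cutoff of \eqref{sharp_variance} you would need the routine sharp-cutoff analogue of Proposition \ref{Diagonal_fixedmoduli}, and uniformity degenerates as $c\to1$). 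Your identification of the obstruction for $c\ge 1$ and $k\ge3$ --- an asymptotic for $\sum_{n\le X}d_k(n)d_k(n+qh)$ uniform in shifts of size up to $X$ --- is the right one, and your proposed fallback, averaging over the modulus and invoking the asymptotic large sieve, is precisely what the paper carries out in Theorems \ref{main_thm} and \ref{main_thm_GRH} for the smoothed, $q$-averaged variance in the ranges $c<(k+2)/k$ and (on GRH) $c<2$.

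Two caveats. First, your assertion that the $k=2$ case of the conjecture is thereby settled on all of $(0,2)$ overstates what is in the literature: \cite{LaZh} (and the related works cited in the introduction) treat $q$ and $X$ growing at restricted relative rates, and the uniformity of the binary additive divisor asymptotic in shifts $qh$ of size comparable to $X$ is exactly where this becomes delicate, so at best this is ``known in spirit'' rather than a proof of the $k=2$ conjecture as stated. Second, your sketch omits the other known partial route, namely Voronoi/functional-equation methods which handle the range $c\in(k-1,k)$ (as in \cite{Le} for short intervals, and as the paper suggests for progressions); combinatorially this is the mirror image $\gamma_k(c)=\gamma_k(k-c)$ of your diagonal range, and it is independent of the shifted-convolution obstruction you describe.
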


Here and in the rest of the paper $k \geq 2$ is assumed to be fixed.  It may be helpful to note that in this conjecture 
the range $c<1$ corresponds to $X \le q$ so that the arithmetic progressions in question have at most one term; the 
range when $c$ is just a little larger than $1$ corresponds to arithmetic progressions with large moduli (close to, but smaller than, $X$).   

In that paper an analogous conjecture is also made for the variance of divisor sums in short intervals:
\begin{conjecture}
\label{ShortConj}
For $X, H \geq 1$ such that $X\rightarrow\infty$ and $X/H\rightarrow\infty$ in such a way that $\frac{\log X}{\log (X/H)} \rightarrow c \in (0,k)$, we have
\begin{equation}
\label{short_int_variance}
\frac{1}{X} \int_X^{2X} \Big|\sum_{x \leq n \leq x+H} d_k(n)\Big|^2\,dx - \Big(\frac{1}{X} \int_X^{2X} \sum_{x\leq n \leq x+H} d_k(n)\, dx\Big)^2 \sim a_k \gamma_k(c) X(\log (X/H))^{k^2-1},
\end{equation}
where $a_k:= \lim (s-1)^{k^2} \sum d_k(n)^2 n^{-s}$. 
\end{conjecture}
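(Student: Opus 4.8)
The natural first move is to pass to the error term. Write $\Delta_k(y):=\sum_{n\le y}d_k(n)$ and let $M_k(y):=\mathrm{Res}_{s=1}\big(\zeta^k(s)y^s/s\big)$, the smooth main term ($y$ times a polynomial in $\log y$ of degree $k-1$), so that $E_k(y):=\Delta_k(y)-M_k(y)$ is the error in the $k$-dimensional divisor problem. Since $X/H\to\infty$, the quantity $M_k(x+H)-M_k(x)$ is the smooth main term of $\sum_{x\le n\le x+H}d_k(n)$, and I would first reduce \eqref{short_int_variance}, after taking this as the centering, to the evaluation of
\[
\frac{1}{X}\int_X^{2X}\big|E_k(x+H)-E_k(x)\big|^2\,dx .
\]
Already this step asks for a mean-square bound on $E_k$ over dyadic ranges of the right order of magnitude, which is routine for $k=2$ and much less so for $k\ge 3$.

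From here I would keep two essentially equivalent approaches in play. The analytic one represents $E_k(y)$ by a truncated Perron integral and moves the contour to $\Re s=\tfrac{1}{2}$, crossing the order-$k$ pole at $s=1$, so that heuristically $E_k(x+H)-E_k(x)\approx\frac{1}{2\pi i}\int_{(1/2)}\zeta^k(s)\frac{(x+H)^s-x^s}{s}\,ds$; squaring, averaging over $x\in[X,2X]$, and carrying out the $x$-integral pins the two spectral parameters together on the critical line and cuts the $t$-range to $|t|\lesssim X/H$, leaving the variance expressed through a weighted $2k$-th moment of $\zeta(\tfrac{1}{2}+it)$. The arithmetic one expands $\big|\sum_{x\le n\le x+H}d_k(n)\big|^2$ and integrates in $x$ first, writing the variance as the $h=0$ diagonal term $\tfrac{H}{X}\sum_{n\sim X}d_k(n)^2$ together with the weighted sum over shifts $\tfrac{1}{X}\sum_{0<|h|<H}(H-|h|)\sum_{n\sim X}d_k(n)d_k(n-h)$, modulo terms coming from the centering. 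Either way one is reduced to the same input --- the conjectural asymptotics for the $2k$-th moment of $\zeta$ on the critical line, equivalently for the $k$-fold additive divisor correlations --- and one needs not merely the leading term but the full moment polynomial, since the lower-order terms of the correlations (and the corresponding off-diagonal ranges) feed into the final answer.

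Granting that input in the precise form predicted by the recipe of Conrey, Farmer, Keating, Rubinstein and Snaith --- which supplies exactly the arithmetic constant $a_k$ of \eqref{short_int_variance} together with an explicit random-matrix factor --- the rest should be bookkeeping: one inserts the moment polynomial into the weighted integral (respectively the additive-divisor asymptotics into the sum over $h$), and after the terms reconstructing the centering cancel against the leading part of the off-diagonal, the surviving main term is the right-hand side of \eqref{short_int_variance}. The appearance of $\gamma_k(c)$ is structural rather than fortuitous: $\gamma_k$ is by construction the same $U(N)$ matrix integral that Keating, the first author, Roditty-Gershon and Rudnick extracted in the function-field model, and the same object that governs Conjecture \ref{APConj}.

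The genuine obstacle is that very input. For $k=2$, the fourth moment of $\zeta$ --- with the uniformity and averaging the argument needs --- is classical, and Conjecture \ref{ShortConj} can be proved unconditionally in the stated range. For $k=3$ one requires the sixth moment, equivalently the ternary additive divisor asymptotics, which is known only in restricted ranges; and for $k\ge 4$ the relevant moment --- equivalently, the main term in the $k$-fold additive divisor problem --- is a well-known open problem. One should therefore not expect to prove Conjecture \ref{ShortConj} outright for large $k$; the feasible target is an \emph{averaged} statement in which one additionally averages over the length $H$ across a dyadic window --- the short-interval counterpart of the averaging over moduli $q$ in Conjecture \ref{APConj} --- where the extra averaging should furnish enough cancellation to replace the pointwise moment asymptotic by an appeal to the asymptotic large sieve, exactly as in the companion result on arithmetic progressions that this paper establishes.
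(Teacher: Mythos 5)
The statement you were asked about is Conjecture~\ref{ShortConj}, not a theorem; the paper offers no proof of it, so there is nothing to compare your argument against. You have, correctly, not claimed to prove it either: your write-up is a heuristic reduction together with an honest assessment of where the genuine obstruction lies, and that assessment matches the paper's own discussion. In particular, your identification of the required input --- the $2k$-th moment of $\zeta(\tfrac12+it)$, equivalently the $k$-fold additive divisor correlations with full lower-order terms --- is the right one, your remark that this is classical for $k=2$ and open for $k\ge 3$ (partial for $k=3$) agrees with the state of the art, and your closing suggestion to average over the interval length $H$ as the short-interval analogue of the averaging over moduli $q$ is exactly the philosophy the paper pursues for Conjecture~\ref{APConj} via the asymptotic large sieve.

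Two small points worth flagging. First, the paper records (citing Lester) that the range $c\in(k-1,k)$ of Conjecture~\ref{ShortConj} \emph{is} known for every $k$, via summation formulas coming from the functional equation of $\zeta$; your account, focused on moment asymptotics, reads as if everything beyond $k=2$ were out of reach, whereas one end of the $c$-range is in fact accessible unconditionally. Second, in your arithmetic expansion the term ``$\tfrac{1}{X}\sum_{0<|h|<H}(H-|h|)\sum_{n\sim X}d_k(n)d_k(n-h)$'' should be understood with the caveat that the sharp cutoff $x\le n\le x+H$ makes the weight in $h$ only approximately tent-shaped and introduces boundary terms; this is cosmetic for a heuristic, but if one were to push for a rigorous reduction (even conditionally) the paper's choice of smooth weights $\Psi$, $\Phi$ is there precisely to avoid such bookkeeping. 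Neither point affects the substance: your proposal is a reasonable and accurate sketch of why Conjecture~\ref{ShortConj} is expected to hold and why it remains open for general $k$.
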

Note that $X$ plays the same role in \eqref{short_int_variance} and \eqref{sharp_variance}, while $H$ in \eqref{short_int_variance} plays the role of the number of terms $X/q$ in each summand of \eqref{sharp_variance}.

The piecewise polynomial $\gamma_k(c)$ appearing in these conjectures may be defined by
\begin{equation}
\label{piecewise_poly}
\gamma_k(c):= \frac{1}{k! \, G(k+1)^2} \int_{[0,1]^k} \delta_c(w_1+\cdots+w_k) \Delta(w)^2\, d^k w,
\end{equation}
with $\delta_c(x) := \delta(x-c)$ a Dirac delta-function centered at $c$, and $\Delta(w) := \prod_{i<j} (w_i-w_j)$ a Vandermonde determinant, and $G$ the Barnes $G$-function, so that in particular $G(k+1) = (k-1)!\cdot (k-2)! \cdots 1!$. Thus defined $\gamma_k(c)$ is a certain polynomial on each of the intervals $[0,1)$, $[1,2)$,...,$[k-1,k)$, with changes in its coefficients from interval to interval. For instance 
$$
\gamma_3(c) = \begin{cases} 
 \frac{1}{8!}c^8 &\textrm{for}\;0\leq c<1,\\
 \frac{1}{8!} (3-c)^8 &\textrm{for} \; 2\le c<3,  
 \end{cases}
$$
while for $1\le c\le 2$ we have 
$$
\gamma_3(c) = \frac 1{8!}\Big(-2 c^8+24 c^7-252 c^6+1512 c^5\\-4830
c^4+8568 c^3-8484 c^2+4392 c-927\Big).
$$
In general
$$
\gamma_k(c) = \gamma_k(k-c).
$$
That $\gamma_k(c)$ changes from interval to interval in Conjectures \ref{APConj} and \ref{ShortConj} is an at first surprising phenomenon. Though analogous phase changes occur in the function field analogue proved in \cite{KeRoRoRu}, over the integers these phase changes remain somewhat mysterious. Indeed, the observation that there is somewhat strange behavior for limiting functions like $\gamma_k(c)$ may be said to date back at least to the work of Conrey and Gonek \cite{CoGo}, who studied polynomials very closely related to $\gamma_k(c)$ with $c \in [1,2)$ in their work on the eighth moment of the Riemann zeta-function. In fact, Conjectures \ref{APConj} and \ref{ShortConj} remain closely related to the moments of Dirichlet $L$-functions and the Riemann zeta-function respectively. See \cite{CoKe1,CoKe2,CoKe3,CoKe4} for recent heuristic work that is related to Conrey and Gonek's.

What is known rigorously over the integers in the short-interval setting of Conjecture \ref{ShortConj} follows from using summation formulas related to the functional equation for the Riemann zeta function. In this way Lester \cite{Le} has evaluated the variance for $c \in (k-1,k)$. It is likely that a similar argument could be used to verify Conjecture \ref{APConj} in this restricted range for all $k$ (indeed, this is close to the strategy of \cite{LaZh} in the case $k=2$).

Our purpose here is to demonstrate that a different range of the asymptotic evaluation $\gamma_k(c)$ in Conjecture \ref{APConj} may be rigorously seen if we allow ourselves to average over moduli $q$. We make use of smoothed weights as opposed to the sharp cutoffs in the variance \eqref{sharp_variance}, and our main result concerns the following quantities:

We let $\Phi$ and $\Psi$ be fixed smooth non-negative functions compactly supported in the positive reals normalized so that
$$
\int \Phi = 1, \quad\quad \int \Psi^2 = 1.
$$
Define
$$
V_k(q;X):= \sum_{\substack{1 \leq a \leq q \\ (a,q)=1}} \bigg| \sum_{n \equiv a \Mod q} d_k(n) \Psi\Big(\frac{n}{X}\Big) - \frac{1}{\phi(q)} \sum_{(n,q)=1} d_k(n) \Psi\Big(\frac{n}{X}\Big)\bigg|^2,
$$
and
$$
\Delta_k(Q;X):= \sum_q V_k(q;X) \Phi\Big(\frac{q}{Q}\Big).
$$
\begin{theorem}
\label{main_thm}
Fix $k \geq 2$. Suppose $X,Q\rightarrow \infty$ and introduce the parameter $c:=\frac{\log X}{\log Q}$. For $\delta > 0$, uniformly for $c$ constrained by $\delta \leq c \leq \frac{k+2}{k}-\delta,$ we have
\begin{align}
\label{smooth_variance1}
\Delta_k(Q;X) &\sim \sum_q a_k(q) X \gamma_k(c) (\log q)^{k^2-1} \Phi\Big(\frac{q}{Q}\Big) \\
\label{smooth_variance2}&\sim \widetilde{a}_k\gamma_k(c)  QX (\log Q)^{k^2-1},
\end{align}
where $\widetilde{a}_k$ is the arithmetic constant
\begin{equation}
\label{constant_avg}
\widetilde{a}_k:= \lim_{Q\rightarrow\infty} \frac{1}{Q}\sum_{q\leq Q} a_k(q).
\end{equation}
\end{theorem}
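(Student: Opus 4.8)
The plan is to open with orthogonality of characters to replace the residue-class average by a sum over Dirichlet characters $\chi\pmod q$: the quantity $V_k(q;X)$ becomes, up to the removal of the principal character contribution, $\tfrac{1}{\phi(q)}\sum_{\chi\neq\chi_0}\bigl|\sum_n d_k(n)\chi(n)\Psi(n/X)\bigr|^2$. Summing against $\Phi(q/Q)$ and expanding the square, one reduces $\Delta_k(Q;X)$ to a double sum over integers $m,n$ coprime to $q$ with a weight $\sum_{q}\Phi(q/Q)\tfrac{1}{\phi(q)}\bigl(\sum_{\chi}\chi(m)\overline{\chi(n)}-1\bigr)$, which by orthogonality again detects $m\equiv n\pmod q$ (minus the diagonal-across-$q$ term). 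At this point the object is exactly of the shape handled by the \emph{asymptotic large sieve} of Conrey, Iwaniec and Soundararajan: one is averaging the divisor correlation $d_k(m)d_k(n)$ over $m\equiv n\pmod q$ with smooth weights in $m,n,q$. I would invoke that machinery to extract the main term, which comes from the ``diagonal'' $m=n$ together with the off-diagonal terms that the large sieve shows to be of comparable size in the relevant range; the restriction $c\le \tfrac{k+2}{k}-\delta$ is precisely the limit of what the asymptotic large sieve can control (the off-diagonal main terms and error terms remain tractable only up to that threshold).

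Next I would identify the main term. The diagonal contributes $\sum_q \Phi(q/Q)\tfrac{1}{\phi(q)}\sum_{(n,q)=1} d_k(n)^2\Psi(n/X)^2$, and by a standard Perron/contour argument using the Dirichlet series $\sum d_k(n)^2 n^{-s}$, whose polar part at $s=1$ is governed by $a_k(q)(s-1)^{-k^2}$, this evaluates to a constant times $a_k(q)X(\log X)^{k^2-1}$; the off-diagonal terms supplied by the large sieve contribute further pieces that, when assembled, convert the power of $\log X$ into the piecewise-polynomial profile $\gamma_k(c)(\log q)^{k^2-1}$. Concretely, I expect the combinatorial heart of the computation to be a shifted-moment / contour-integral manipulation in which the residue is written as a $k$-dimensional integral over shift parameters; carrying out that integral and matching it against the definition \eqref{piecewise_poly} of $\gamma_k(c)$ — with the Vandermonde $\Delta(w)^2$ and the Barnes $G$-function normalization emerging from the $\zeta$-factors — is the step that produces the phase transitions at integer values of $c$. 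This is formally the same mechanism as in the function-field computation of \cite{KeRoRoRu} and in Conrey–Gonek-type moment heuristics, so I would organize it to mirror those. Passing from \eqref{smooth_variance1} to \eqref{smooth_variance2} is then just summation by parts against $\Phi(q/Q)$ together with the definition \eqref{constant_avg} of $\widetilde a_k$ and the fact that $a_k(q)(\log q)^{k^2-1}$ is slowly varying on the scale $q\asymp Q$.

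The main obstacle is the off-diagonal analysis: one must show that the terms with $m\neq n$, $m\equiv n\pmod q$ — that is, $m-n=qr$ with $r\neq 0$ — both (a) contribute a genuine secondary main term that is essential to producing $\gamma_k(c)$ rather than a monomial in $\log$, and (b) do not produce uncontrollable error terms once $c$ exceeds $1$. Handling (a) requires an asymptotic evaluation of $\sum_{m\equiv n(q)} d_k(m)d_k(n)\,(\text{weights})$ that is uniform enough to see the lower-order logarithmic structure, and for general $k$ this correlation is not known unconditionally in the naive range; the point of averaging over $q$ via the asymptotic large sieve is exactly to make this accessible, but extracting the correct constants (and checking they reassemble into the Barnes-$G$ normalization) is delicate. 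I would expect to spend the bulk of the argument on a careful contour-shift bookkeeping of these pieces, keeping track of which poles are crossed as $c$ moves through $1, 2, \dots$, and verifying termwise that the result agrees with the piecewise polynomial defined in \eqref{piecewise_poly}. A secondary technical point is controlling the transition between the character-sum formulation and the large-sieve input uniformly in the stated range of $c$, including the removal of imprimitive characters, which contributes lower-order but not entirely negligible terms that must be shown to be absorbed into the error.
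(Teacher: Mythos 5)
Your outline captures the paper's broad architecture — diagonal/off-diagonal split, the asymptotic large sieve, Mellin/contour manipulation, a final identification with $\gamma_k(c)$ — but two load-bearing ideas are absent or misidentified, and one of them is the step that makes the whole thing work.

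The first is the complementary-divisor (``switching of moduli'') step, which you do not describe. Detecting $m\equiv n\pmod q$ via characters of modulus $q\asymp Q$, as your first reduction suggests, and then appealing to the large sieve, gives only $\ll (X+Q^2)\,X(\log X)^{k^2-1}$ — off from $QX(\log Q)^{k^2-1}$ by a factor of $Q$ whenever $X\ll Q^2$. The mechanism that rescues the range $c>1$ is to pull out $g=(m,n)$, write $m=gM$, $n=gN$, express the divisibility $q\mid(M-N)$ by summing over the \emph{complementary} divisor $r$ with $rq\sim|M-N|$, and observe that the smooth cutoff $\Phi$ then forces $r\ll X/Q$, which is small. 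One then detects $M\equiv N\pmod{ar}$ with Dirichlet characters of modulus $ar\ll X/Q$; the principal characters supply the secondary main term $\mathcal{MG}_k$, and the large sieve/moment bound applied to non-principal characters of these small moduli gives $\mathcal{EG}_k\ll_\epsilon X^{1+\epsilon}(X/Q)^{k/2}$, which is $o(XQ)$ exactly for $c\le (k+2)/k-\delta$. If you treat the asymptotic large sieve as a black box applied to characters modulo $q$, that step fails; the switch of moduli is the content.

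The second is the route to $\gamma_k(c)$. The number-theoretic side does not produce a $k$-dimensional shift integral against a Vandermonde density; it produces a three-variable residue in $(s_1,s_2,z)$ defining a polynomial $P_k(c)$ as in \eqref{P_k}, arising from shifting the $z$-contour past $z=1$ after cancelling $\mathcal{MG}_k$ against $\mathcal{B}_k$ at the pole $z=0$. The identification $\gamma_k(c)=c^{k^2-1}/(k^2-1)!+P_k(c)$ on $[1,2)$ is then proved by an independent random-matrix computation: one applies the Conrey--Farmer--Keating--Rubinstein--Snaith autocorrelation formula to $\int_{U(N)}\det(1-xg)^k\det(1-g^{-1})^k\,dg$, extracts the coefficient $I_k(m;N)$ as a residue, takes $N\to\infty$ with $m/N\to c$, and compares against the theorem of \cite{KeRoRoRu} that $I_k(m;N)\sim\gamma_k(m/N)N^{k^2-1}$. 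The Barnes $G$-function and $\Delta(w)^2$ live on the RMT side of this identity, not in the $\zeta$-residue calculus; expecting them to emerge directly from the $\zeta$-factors would require a much harder and differently organized expansion. Relatedly, the cancellation $\mathcal{MG}_k-\mathcal{B}_k$ is not a cosmetic subtraction of the mean: for $c>1$ both terms are individually of larger order than $QX(\log Q)^{k^2-1}$, and the argument must exhibit the cancellation by matching $\lim_{z\to 0}(-zT_k)$ against $\lim_{z\to 0}(zF_k)$, a point your sketch does not address.
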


The constants $a_k(q)$ and $\widetilde{a}_k$ in \eqref{constant_q} and \eqref{constant_avg} may also be expressed as an Euler product. For
$$
\mathfrak{a}_p:= \sum_{\ell=0}^\infty \binom{k+\ell-1}{k-1}^2 p^{-\ell},\quad \textrm{and} \quad a_k:= \prod_p \Big(1-\frac{1}{p}\Big)^{k^2} \mathfrak{a}_p,
$$
we have
\begin{equation}
\label{constants_euler}
a_k(q) = a_k \prod_{p|q} \mathfrak{a}_p^{-1},\quad\textrm{and}\quad \widetilde{a}_k= a_k \prod_p\Big( 1 - \frac{1}{p} (1-\mathfrak{a}_p^{-1})\Big).
\end{equation}
We leave it to the reader to verify that this representation \eqref{constants_euler} for the arithmetic constants is the same as \eqref{constant_q} and \eqref{constant_avg}, and likewise that the expressions \eqref{smooth_variance1} and \eqref{smooth_variance2} in Theorem \ref{main_thm} are the same (owing to the slow growth of the logarithm function).

The work of de la Bret\`eche and Fiorilli \cite{BrFi} considers a related variance, using however an arithmetic approximation 
(motivated by work of Vaughan) instead of the probabilistic variance considered here.   Interestingly, their asymptotic for their arithmetic variance matches our result in Theorem 1.


If we assume the Generalized Riemann Hypothesis (GRH), we can prove a stronger result\footnote{In fact, a generalized Lindel\"of hypothesis will suffice.}:
\begin{theorem}
\label{main_thm_GRH}
On GRH, \eqref{smooth_variance1} is true uniformly for $\delta \leq c \leq 2-\delta$, for all $k \geq 2$.
\end{theorem}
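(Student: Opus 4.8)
The plan is to revisit the proof of Theorem~\ref{main_thm}, isolate the single place where the ceiling $c \le \tfrac{k+2}{k}-\delta$ is imposed, and show that the generalized Lindel\"of hypothesis removes it below $c = 2$. Recall the skeleton of that argument. Subtracting the principal character gives $V_k(q;X) = \phi(q)^{-1}\sum_{\chi \ne \chi_0 \bmod q}|D_k(\chi)|^2$ with $D_k(\chi) = \sum_n d_k(n)\chi(n)\Psi(n/X)$; writing each $\chi$ through the primitive character $\chi^*$ that induces it, expanding $|D_k(\chi)|^2$, and using orthogonality of Dirichlet characters over residue classes and over moduli, one is left with a diagonal main term together with an off-diagonal term built from the divisor correlations $\sum_{m \equiv n \Mod{q^*},\; m \ne n} d_k(m)d_k(n)\Psi(m/X)\Psi(n/X)$ averaged against $\Phi(q/Q)$. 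The asymptotic large sieve handles this off-diagonal: after reciprocity it dualizes into a structurally similar average over a new family of characters, and unwinding the multiplicative structure of $d_k$ produces a combinatorial main term which, added to the diagonal, converts the monomial $c^{k^2-1}/(k^2-1)!$ (the value of $\gamma_k(c)$ on $[0,1)$) into $\gamma_k(c)$ --- an identity that is purely formal, valid for all $c \in (0,k)$, and therefore inherited from the proof of Theorem~\ref{main_thm}. The obstruction is the accompanying error: unconditionally, the dualized average can be evaluated with a power saving only when the Dirichlet polynomials involved are short enough relative to the conductors, and that requirement works out to $c < \tfrac{k+2}{k}$.

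On GRH one does not need an asymptotic for the dualized average --- a good enough upper bound suffices, and Lindel\"of supplies it. For any non-principal $\chi \bmod q$ induced by $\chi^*$ primitive modulo $q^*$ one has $|L(\tfrac12+it,\chi^*)| \ll_\varepsilon (q^*(1+|t|))^\varepsilon$; shifting the contour in the Mellin integral representation of $D_k(\chi)$ in terms of $L(s,\chi^*)^k$ down to the critical line --- the smooth weight $\Psi$ making the resulting integral converge rapidly --- yields $D_k(\chi) \ll_\varepsilon X^{1/2}(qX)^\varepsilon$, and similarly $\sum_{n\le Y}d_k(n)\chi(n) \ll_\varepsilon Y^{1/2}(qY)^\varepsilon$ for every $Y \ge 1$. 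Feeding these square-root-cancellation bounds into the off-diagonal term of the asymptotic large sieve in place of the unconditional divisor-correlation inputs, and using the same bounds to control the contribution of imprimitive characters and of the non-decaying tails of the smooth cutoffs, one finds that each of these is $O(Q^{1+c-\eta})$ for some $\eta = \eta(c,\delta) > 0$ as soon as $c < 2$, hence negligible against the main term $\asymp QX(\log Q)^{k^2-1} \asymp Q^{1+c}(\log Q)^{k^2-1}$; here $c < 2$ is exactly the range in which $D_k(\chi)$, a character sum of length $X = Q^c$, stays shorter than $Q^2$, the classical large-sieve range. Thus the very computation that produced \eqref{smooth_variance1} for $c < \tfrac{k+2}{k}$ now produces it for all $\delta \le c \le 2-\delta$.

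The main difficulty is not conceptual but a matter of careful bookkeeping near the endpoint. One has to check that every error term in the proof of Theorem~\ref{main_thm}, with its unconditional input replaced by the GRH-conditional square-root bound above, genuinely beats the main term by a fixed power of $Q$ uniformly for $\delta \le c \le 2-\delta$: this includes the terms localized where $m$ is close to $n$, the terms with $q^*$ small (where $L(s,\chi^*)^k$ nearly has its pole at $s = 1$ and the contour tails are most delicate), and the integer value $c = 1$ across which $\gamma_k$ changes polynomial. One must also verify that Lindel\"of is invoked only on the critical line and with full uniformity in the conductor and in the height of the contour, so that all implied constants are genuinely uniform in $q$, $q^*$, and $c$. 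Granting this routine but lengthy verification, \eqref{smooth_variance1} holds throughout $\delta \le c \le 2-\delta$, and since \eqref{smooth_variance1} and \eqref{smooth_variance2} are equivalent this proves Theorem~\ref{main_thm_GRH}.
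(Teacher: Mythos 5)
The overall strategy you sketch --- use Lindel\"of on the critical line to improve control of the off-diagonal --- is aligned with the paper's actual route, but you have located only one of the two places where the threshold $c < (k+2)/k$ arises, and the one you omit is not subsumed by what you discuss. In the paper the unconditional range restriction enters twice: once in bounding $\mathcal{EG}_k$, the non-principal-character remainder coming from the asymptotic large sieve (Lemma~\ref{Ebound_unconditional}), and a second time in the \emph{main term} estimate for $\mathcal{MG}_k - \mathcal{B}_k$ (Step~1 of Lemma~\ref{off_diagonal_unconditional}), where the contour in $s_1, s_2$ is shifted past the critical line and the resulting remainder is controlled by moments of $\zeta(s)$ itself, giving $O(Q^{-A}X^{3/2+5\epsilon+A})$ which is $O(XQ)$ only for $X \leq Q^{(k+2)/k - \delta}$. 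Extending this to $X \leq Q^{2-\delta}$ requires the bound $|\zeta(1/2+it)| \ll (1+|t|)^{\epsilon}$, i.e.\ RH (or Lindel\"of) for $\zeta$, a hypothesis invoked \emph{before} any Dirichlet $L$-function appears. Your proposal explicitly restricts the Lindel\"of input to non-principal $\chi$ and then asserts that the combinatorial main term ``is purely formal, valid for all $c \in (0,k)$, and therefore inherited from the proof of Theorem~\ref{main_thm}.'' That is not the case: the asymptotic for $\mathcal{MG}_k - \mathcal{B}_k$ in terms of $P_k(\log X/\log Q)$ is proved only in the restricted range unconditionally and in $c < 2$ on RH, and the identity $\gamma_k(c) = c^{k^2-1}/(k^2-1)! + P_k(c)$ is verified in Section~\ref{sec:comparison} only for $c \in [1,2)$. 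Without the RH extension of Lemma~\ref{off_diagonal_unconditional}, your argument has no main term to compare the error against in the range $(k+2)/k \le c < 2$.

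A secondary but real issue is that the pointwise input you propose, $D_k(\chi) \ll_{\epsilon} X^{1/2}(qX)^{\epsilon}$, applied directly to $V_k(q;X) = \phi(q)^{-1}\sum_{\chi \neq \chi_0}|D_k(\chi)|^2$, gives only $V_k \ll X^{1+\epsilon}$ and hence $\Delta_k \ll QX^{1+\epsilon}$, which does not beat the main term $\asymp QX(\log Q)^{k^2-1}$ by any power of $Q$. The saving genuinely comes from the large-sieve restructuring in which the non-principal characters have \emph{modulus} $ar \ll X/Q$, not $q$, and one then needs a moment bound over characters of these small moduli integrated against the decaying weight $\widetilde{\mathcal{W}}_2$ (the paper's Proposition~\ref{conditional_moments}, which follows from the pointwise Lindel\"of bound in this averaged form). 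Your phrase ``feeding these square-root-cancellation bounds into the off-diagonal term'' gestures at this but does not carry it out, and as stated the pointwise bound on $D_k(\chi)$ is a red herring. You would need to exhibit the structure of $\mathcal{EG}_k$ in terms of $L(s,\chi)$ for $\chi$ of small modulus and then apply Lindel\"of there.
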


We note that with more work and bookkeeping, the asymptotic formula in Theorems \ref{main_thm} and \ref{main_thm_GRH} should be able to be replaced by a more complicated expression with a power-saving error term. Likewise, in these theorems we have made use of smooth cut-offs $\Phi$ and $\Psi$. It remains an interesting challenge to prove results like these with sharp cut-offs.

In recent work of Harper and the second author \cite{HaSo}, it was shown unconditionally that a quantity essentially the same as $\Delta_k(Q;X)$ is at least of order $QX (\log Q)^{k^2-1}$ for the entire range $\delta < c < 2-\delta$.  

We have already mentioned the close connection between Conjectures \ref{APConj} and \ref{ShortConj} on the one hand and conjectures for the moments of Dirichlet $L$-functions and the Riemann zeta-function on the other. Indeed, our method of proof is based on the asymptotic large sieve, developed by Conrey, Iwaniec, and the second author and applied in \cite{CoIwSo} by them to prove estimates for the $6^{th}$ moment of Dirichlet $L$-functions averaged over moduli, and subsequently used by Chandee and Li \cite{ChLi} to prove estimates for the $8^{th}$ moment under the assumption of GRH. Our technique closely follows those papers.\footnote{It is worth noting in this connection that $\int_0^k \gamma_k(c)\,dc = G(1+k)^2/G(1+2k)$, the same constant conjectured to appear in moments of $L$-functions in a unitary family.}

We give a brief outline of the main idea; the problem of estimating $\Delta_k(Q;X)$ may roughly be reduced to the problem of giving an accurate estimate for the sum
\begin{align*}
&\sum_q \Phi\Big(\frac{q}{Q}\Big) \sum_{\substack{n \equiv m \Mod q \\ n\neq m}} d_k(n) d_k(m) \Psi\Big(\frac{n}{X}\Big) \Psi\Big(\frac{m}{X}\Big) \\
&= \sum_{n \neq m} d_k(n) d_k(m) \Psi\Big(\frac{n}{X}\Big) \Psi\Big(\frac{m}{X}\Big) \sum_{rq = n-m} \Phi\Big(\frac{q}{Q}\Big) \\
&= \sum_{n \neq m} d_k(n) d_k(m) \Psi\Big(\frac{n}{X}\Big) \Psi\Big(\frac{m}{X}\Big) \sum_{r|(n-m)} \Phi\Big( \frac{n-m}{rQ}\Big).
\end{align*}
The advantage of rewriting the sum in this way is that the sum over $r$ is restricted to an interval of size $(n-m)/Q \approx X/Q$, which for us will be smaller than $Q$. The condition that $r|(n-m)$ may in turn be written in terms of a sum over Dirichlet characters modulo $r$, with principal characters contributing a main term and all others contributing only to an error term that may be bound using the large sieve, or slightly more effectively by assuming GRH. This main term is then possible to estimate using classical, though elaborate, techniques. Of course, many number-theoretic details are left out of this rough description, including especially coprimality conditions that inexorably arise when making use of Dirichlet characters which nonetheless make the argument more cumbersome. 

Alongside this technique, we will require a non-trivial combinatorial argument to verify that the asymptotic formula we produce agrees with the piecewise polynomial that has been predicted.

The organization of this paper is as follows: In section \ref{sec:decomp} we decompose the variance into pieces that will be treated seperately. These are: diagonal terms, off-diagonal terms and a regular approximation, and a sum over Dirichlet characters that will be an error term. After developing analytic estimates that we use throughout the paper, we turn in sections \ref{sec:diagonal} -- \ref{sec:error} to proving an asymptotic formula for each of these pieces. Putting these pieces together in section \ref{sec:final} yields an asymptotic formula for the variance. Finally we show in section \ref{sec:comparison} that this asymptotic formula agrees with the $\gamma_k(c)$ prediction.

We make one more comment about these results: by rewriting $V_k(q;X)$ and $\Delta_k(Q;X)$ as a variance of sums
\begin{equation}
\label{char_sums}
\sum_n d_k(n) \chi(n) \Psi\Big(\frac{n}{X}\Big),
\end{equation}
over a family of characters $\chi$ and using a summation formula for such sums, it should be possible to extend Theorems \ref{main_thm} and \ref{main_thm_GRH} to the dual ranges
$$
k- (k+2)/k+\delta \leq c \leq k-\delta,\quad \textrm{and}\quad k-2+\delta \leq c \leq k-\delta,
$$
respectively. An idea of this sort is effectively used in \cite{CoIwSo} and \cite{ChLi} in the form of an approximate functional equation in order to compute moments. In those papers, characters $\chi$ were averaged over only \emph{primitive} characters, and for such characters summation formulas for \eqref{char_sums} are less complicated to write down. Here in relating the variance of \eqref{char_sums} to the quantities $V_k(q;X)$ and $\Delta_k(Q;X)$, we must average over also \emph{imprimitive} characters; this makes the application of summation formulas rather more cumbersome. These dual ranges are not treated in this paper.

\vspace{5pt}
\noindent {\bf Acknowledgments.}
We thank Adam Harper  for a  discussion which prompted us to think about this problem, and R\'egis de la Bret\`eche along with an anonymous referee for corrections. Some of the research for this paper was done while the first author was visiting Stanford University, which he thanks for its gracious hospitality.   The second author is partially supported through a grant from the National Science Foundation (NSF) and a Simons Investigator grant from the Simons Foundation.  

\bibliographystyle{plain} 

\section{Decomposing the variance}
\label{sec:decomp}

\subsection{An initial decomposition}

Since 
$$ 
V_k(q; X) = \sum_{\substack{ m \equiv n \Mod q\\ (mn,q)=1}} d_k(m)d_k(n) \Psi\Big(\frac mX \Big) \Psi\Big(\frac nX\Big) 
- \frac{1}{\phi(q)} \Big| \sum_{(n,q)=1} d_k(n) \Psi\Big(\frac nX\Big) \Big|^2, 
$$ 
we may write 
$$
\Delta_k(Q;X) = \mathcal{A}_k(Q;X) - \mathcal{B}_k(Q;X),
$$
with
\begin{equation}
\label{A_sum}
\mathcal{A}_k(Q;X):= \sum_q \Phi\Big(\frac{q}{Q}\Big) \sum_{\substack{m \equiv n \Mod q \\ (mn,q)=1}} d_k(n) d_k(m) \Psi\Big(\frac{m}{X}\Big) \Psi\Big(\frac{n}{X}\Big),
\end{equation}
and 
\begin{equation}
\label{B_sum}
\mathcal{B}_k(Q;X):= \sum_q \Phi\Big(\frac{q}{Q}\Big) \frac{1}{\phi(q)} \Big| \sum_{(n,q)=1} d_k(n) \Psi\Big(\frac{n}{X}\Big)\Big|^2.
\end{equation}
The quantity ${\mathcal B}_k(Q;X)$ is relatively easy to evaluate; it makes a large contribution when $X\gg Q$, which will be offset by a correspondingly large contribution from ${\mathcal A}_k(Q;X)$.  
To handle $\mathcal{A}_k$, we begin by breaking into pieces consisting of diagonal and off-diagonal terms:
$$
\mathcal{A}_k(Q;X) = \mathcal{D}_k(Q;X) + \mathcal{G}_k(Q;X),
$$
with
\begin{equation}
\label{D_sum}
\mathcal{D}_k(Q;X) := \sum_q \Phi\Big(\frac{q}{Q}\Big) \sum_{(n,q)=1} d_k(n)^2 \Psi\Big(\frac{n}{X}\Big)^2
\end{equation}
and
\begin{equation}
\label{G_sum}
\mathcal{G}_k(Q;X) := \sum_q \Phi\Big(\frac{q}{Q}\Big) \sum_{\substack{m \equiv n \Mod q \\ (mn,q)=1 \\ m \neq n}} d_k(m) d_k(n)\Psi\Big(\frac{n}{X}\Big) \Psi\Big(\frac{m}{X}\Big).
\end{equation}
The diagonal term will be easy enough to estimate (see section \ref{sec:diagonal}), and so long as $X = o(Q)$ only diagonal terms make an important contribution to $\Delta_k$. For $X \gg Q$ however $\mathcal{G}_k$ contributes to the main term and we deal with it using the asymptotic large sieve.

\subsection{An off-diagonal decomposition}
We break $\mathcal{G}_k$ into a main term and error term as follows.  Write 
$$
m= gM, \ \  n = gN \ \ \text{with }  \ g := (m,n). 
$$
Note that for integers $m\neq n$, we have $m \equiv n \Mod q$ and $(mn,q)=1$ if and only if $q|(M-N)$ and $(q,g)=1$. Hence \eqref{G_sum} can be rewritten as 
\begin{align*}
\mathcal{G}_k(Q,X) &= \sum_{\substack{m,n \\ m\neq n}} d_k(m)d_k(n) \Psi\Big(\frac{m}{X}\Big) \Psi\Big(\frac{n}{X}\Big) \sum_{\substack{(q, g)=1 \\ q|(M-N)}} \Phi\Big(\frac{q}{Q}\Big) \\
&= \sum_{\substack{m,n \\ m\neq n}} d_k(m)d_k(n) \Psi\Big(\frac{m}{X}\Big) \Psi\Big(\frac{n}{X}\Big) \sum_{\substack{a,\ell \\ a|g \\ a\ell|(M-N)}} \mu(a) \Phi\Big(\frac{a\ell}{Q}\Big),
\end{align*}
where in the second line we used M\"obius inversion to express the coprimality condition $(q,g)=1$. By letting $r > 0$ be such that $a\ell r = |M-N|$, we can rewrite the above as
$$
\sum_{\substack{m,n \\ m\neq n}} d_k(m)d_k(n) \Psi\Big(\frac{m}{X}\Big) \Psi\Big(\frac{n}{X}\Big) \sum_{\substack{a,r \\ a | g \\ M \equiv N \Mod {ar}}} \mu(a) \Phi\Big(\frac{|M-N|}{rQ}\Big).
$$
Since the function $\Phi$ is supported away from $0$, note that the condition $m \neq n$ is redundant. 

Expressing the congruence condition using Dirichlet characters, we obtain
\begin{align}
\label{MainAndErrorDef}
\notag \mathcal{G}_k(Q,X) =& \sum_{m,n} d_k(m) d_k(n) \Psi\Big(\frac{m}{X}\Big) \Psi\Big(\frac{n}{X}\Big) \sum_{\substack{a,r \\ a| g}} \frac{\mu(a)}{\phi(ar)} \sum_{\chi \Mod {ar}} \chi(M) \overline{\chi(N)} \Phi\Big(\frac{|M-N|}{rQ}\Big) \\
=& \mathcal{MG}_k(Q;X) + \mathcal{EG}_k(Q;X),
\end{align}
where we have split the sum into a main term with contributions coming only from principal characters
\begin{equation}
\label{MG_sum}
\mathcal{MG}_k(Q;X):= \sum_{m,n} d_k(m) d_k(n) \Psi\Big(\frac{m}{X}\Big) \Psi\Big(\frac{n}{X}\Big) \sum_{\substack{a,r \\ a |g \\ (ar, MN)=1}} \frac{\mu(a)}{\phi(ar)} \Phi\Big(\frac{|M-N|}{rQ}\Big),
\end{equation}
and (what will turn out to be) an error term with contributions from all remaining characters
\begin{equation}
\label{EG_sum}
\mathcal{EG}_k(Q;X):= \sum_{m,n} d_k(m) d_k(n) \Psi\Big(\frac{m}{X}\Big) \Psi\Big(\frac{n}{X}\Big) \sum_{\substack{a,r \\ a| g}} \frac{\mu(a)}{\phi(ar)} \sum_{\substack{\chi \Mod {ar} \\ \chi \neq \chi_0}} \chi(M) \overline{\chi(N)} \Phi\Big(\frac{|M-N|}{rQ}\Big).
\end{equation}
In order to work more easily with \eqref{MG_sum} and \eqref{EG_sum} we define for $x,y,u \geq 0$ the function
\begin{equation}
\label{W_def}
\mathcal{W}(x,y;u):= \Psi(x)\Psi(y)\Phi(u|x-y|),
\end{equation}
so that
$$
\Psi\Big(\frac{m}{X}\Big)\Psi\Big(\frac{n}{X}\Big) \Phi\Big( \frac{|M-N|}{rQ}\Big) = \mathcal{W}\Big(\frac{gM}{X},\frac{gN}{X}; \frac{X}{grQ}\Big).
$$
We will deduce some analytic properties of the function $\mathcal{W}$ below in Section \ref{sec:analytic}.

To summarize what we have shown in this section: we have the decomposition 
\begin{equation}
\label{summary}
\Delta_k = \mathcal{D}_k - \mathcal{B}_k + \underbrace{\mathcal{MG}_k + \mathcal{EG}_k}_{\mathcal{G}_k}.
\end{equation}

\section{Mellin transforms of weight functions}
\label{sec:analytic}

In this section we collect in one place some analytic estimates that we will need in the course of our proof. Since this material is somewhat technical, the reader may wish to skim through the results in this section at first and the come back to them when they are called upon. 

In what follows we frequently make use of functions analytic in multiple variables. We do not require any sophistication here: that a function $f(s,z)$ is analytic in $s$ and $z$ in a given region means for us in what follows just that for fixed $z$, $f(s,z)$ is analytic in $s$, and likewise for fixed $s$, $f(s,z)$ is analytic in $z$.


For a smooth function $G$ compactly supported in the positive reals, we denote the Mellin transform by
\begin{equation}
\label{Mellin}
\widetilde{G}(z):= \int_0^\infty G(u) u^{z-1}\,du,
\end{equation}
so that the inverse Mellin transform is given by
\begin{equation}
\label{InverseMellin}
G(u) = \frac{1}{2\pi i} \int_{(\alpha)} \widetilde{G}(z) u^{-z}\, dz,
\end{equation}
for any vertical line $\Re z = \alpha$.

\begin{proposition}
\label{Mellin_decay}
Let $G$ be a smooth function compactly supported inside $(0,\infty)$, and fix a positive constant $A$. Then uniformly for $-A \leq \Re z \leq A$,
$$
\widetilde{G}(z) \ll_\ell \frac{1}{1+|\Im z|^\ell},
$$
for all positive integers $\ell$.
\end{proposition}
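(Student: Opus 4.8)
The statement to prove is Proposition~\ref{Mellin_decay}: for $G$ smooth and compactly supported in $(0,\infty)$, the Mellin transform $\widetilde G(z) = \int_0^\infty G(u) u^{z-1}\,du$ satisfies $\widetilde G(z) \ll_\ell (1+|\Im z|^\ell)^{-1}$ uniformly on any vertical strip $-A \le \Re z \le A$, for every positive integer $\ell$.

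\textbf{Plan of proof.}
The plan is to exploit the compact support of $G$ away from $0$ and $\infty$ together with repeated integration by parts, which is the standard device for converting smoothness into decay under an integral transform. First I would fix $A>0$ and note that since $G$ is supported in a fixed compact set $[\sigma, \tau] \subset (0,\infty)$, on the strip $-A \le \Re z \le A$ the factor $u^{z-1}$ is uniformly bounded: $|u^{z-1}| = u^{\Re z - 1} \le \max(\sigma^{-A-1}, \tau^{A-1})=:C_A$ for $u$ in the support. Hence trivially $|\widetilde G(z)| \le C_A (\tau-\sigma)\sup|G| =: M_A$, which already handles the regime where $|\Im z|$ is bounded (say $|\Im z|\le 1$), since there $1+|\Im z|^\ell \le 2^\ell$.

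For the range $|\Im z| \ge 1$ I would integrate by parts $\ell$ times. Writing $t = \Im z$ and using that $\tfrac{d}{du} u^z = z\, u^{z-1}$, so that $u^{z-1} = \tfrac{1}{z}\tfrac{d}{du} u^{z}$, each integration by parts moves a derivative onto $G$ and the boundary terms vanish because $G$ and all its derivatives vanish at the endpoints of its support (indeed $G$ is compactly supported in the open interval $(0,\infty)$). After $\ell$ steps one obtains
\begin{equation*}
\widetilde G(z) = \frac{(-1)^\ell}{z(z+1)\cdots(z+\ell-1)} \int_0^\infty G^{(\ell)}(u)\, u^{z+\ell-1}\,du.
\end{equation*}
Here one must be slightly careful that the Mellin shift is by $z+j$ at the $j$-th stage; tracking this gives exactly the falling-type product $z(z+1)\cdots(z+\ell-1)$ in the denominator (the precise constants in the product are immaterial). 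Now bound the remaining integral trivially as before: $\big|\int_0^\infty G^{(\ell)}(u) u^{z+\ell-1}\,du\big| \le C_A' (\tau-\sigma)\sup|G^{(\ell)}| =: M_{A,\ell}$, a constant depending only on $A$, $\ell$, and $G$. For the denominator, on the strip $-A \le \Re z \le A$ with $|\Im z|\ge 1$ we have $|z+j| \ge |\Im z| \ge \tfrac12(1+|\Im z|)$ for each $j = 0,\dots,\ell-1$, so $|z(z+1)\cdots(z+\ell-1)| \ge (|\Im z|)^\ell \gg_\ell (1+|\Im z|)^\ell \gg_\ell 1+|\Im z|^\ell$. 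Combining, $|\widetilde G(z)| \ll_{A,\ell} (1+|\Im z|^\ell)^{-1}$ in this range.

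\textbf{Conclusion.}
Merging the two ranges $|\Im z|\le 1$ and $|\Im z|\ge 1$ gives the claimed bound uniformly on $-A\le \Re z\le A$, with an implied constant depending only on $A$, $\ell$, and finitely many sup-norms of derivatives of $G$. I do not anticipate a genuine obstacle here; the only points requiring a modicum of care are (i) justifying that the boundary terms in the integration by parts truly vanish, which follows immediately from $\mathrm{supp}\,G$ being a compact subset of the \emph{open} half-line, and (ii) keeping the bookkeeping of the shifted exponents straight so that the denominator is correctly identified as a product of $\ell$ linear factors in $z$, each of modulus $\gg 1+|\Im z|$ on the strip. Everything else is the routine "smoothness $\Rightarrow$ rapid decay" mechanism.
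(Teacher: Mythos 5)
Your proof is correct and takes exactly the same approach as the paper: a trivial bound for $|\Im z|\le 1$, and $\ell$-fold integration by parts for $|\Im z|\ge 1$, yielding the factor $z(z+1)\cdots(z+\ell-1)=\prod_{j=1}^\ell(z+j-1)$ in the denominator. The only difference is that you spell out the bookkeeping (uniform bound on $u^{\Re z-1}$ over the compact support, vanishing boundary terms, lower bound on the denominator) in more detail than the paper does.
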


\begin{proof}  Since $G$ is compactly supported inside $(0,\infty)$, it follows that 
$\widetilde{G}(z)  \int_0^{\infty} |G(z)| dz \ll 1$, so that the claimed result holds if $|\Im z| \le 1$.  If $|\Im z|\ge 1$, then 
integrating by parts $\ell$ times gives 
$$ 
\widetilde{G}(z) = (-1)^{\ell}  \int_0^\infty G^{(\ell)}(u)  \frac{u^{z+\ell-1}}{\prod_{j=1}^{\ell} (z+j-1)} du \ll_{\ell} \frac{1}{1+|\Im z|^{\ell}}.  
$$ 
\end{proof}

We will also need to make use of multivariable Mellin transforms, for the function $\mathcal{W}$ in particular. Define
\begin{equation}
\label{W2tilde}
\widetilde{\mathcal{W}}_2(s_1,s_2;u):= \int_0^\infty\int_0^\infty \mathcal{W}(x,y;u) x^{s_1-1} y^{s_2-1}\,dxdy,
\end{equation}
\begin{equation}
\label{W3tilde}
\widetilde{\mathcal{W}}_3(s_1,s_2;z):= \int_0^\infty\int_0^\infty\int_0^\infty \mathcal{W}(x,y;u) u^{z-1} x^{s_1-1} y^{s_2-1}\,du dx dy,
\end{equation}
and 
\begin{equation}
\label{wtilde}
\widetilde{w}(s_1,s_2,z) := \int_0^\infty \int_0^\infty \Psi(x) \Psi(y) \frac{x^{s_1-1} y^{s_2-1}}{|x-y|^z} \, dx dy.
\end{equation}
In these definitions, $\widetilde{\mathcal{W}}_2$ is defined for all $s_1,s_2 \in \mathbb{C}$ owing to the compact support of $\Psi$, while
$$
\widetilde{\mathcal{W}}_3(s_1,s_2;z) = \widetilde{\Phi}(z)\widetilde{w}(s_1,s_2,z),
$$
with both $\widetilde{\mathcal{W}}_3$ and $\widetilde{w}$ well-defined for all $s_1, s_2 \in \mathbb{C}$ and $\Re z < 1$.

Because the function $\mathcal{W}$ is integrable and continuous, we have the multivariable Mellin inversion formulas
$$
\mathcal{W}(x,y;u) = \frac{1}{(2\pi i)^2} \int_{(\alpha)}\int_{(\beta)} \widetilde{\mathcal{W}}_2(s_1,s_2;u) x^{-s_1} y^{-s_2}\, ds_2 ds_1,
$$
and
$$
\mathcal{W}(x,y;u) = \frac{1}{(2\pi i)^3} \int_{(\alpha)}\int_{(\beta)}\int_{(\gamma)} \widetilde{\mathcal{W}}_3(s_1,s_2;z) x^{-s_1} y^{-s_2} u^{-z}\, dz ds_2 ds_1,
$$
where $\alpha$ and $\beta$ may be freely chosen for the line of integration, while $\gamma < 1$, and the integrals over $s_1$ and $s_2$ are understood in the principal value sense.

\begin{proposition}
\label{Mellin_AC}
For $\Re z < 1$, we have 
$$
\widetilde{w}(s_1,s_2;z) = \frac{2}{1-z} \widetilde{(\Psi^2)}(s_1+s_2-1) + \mathcal{H}(s_1,s_2;z)
$$
where $\mathcal{H}(s_1,s_2;z)$ is a function that is analytic for all $s_1, s_2$ and $0< \Re z < 2.$ Moreover, for any fixed $\delta > 0$, and $s_1, s_2$ in a fixed compact region, the function $\mathcal{H}(s_1,s_2;z)$ is bounded for $0 \leq \Re z \leq 2-\delta$, and 
$\widetilde{\mathcal{W}}_3(s_1,s_2;z)$ has a meromorphic continuation to $\Re z <2 $ furnished by 
$$
\widetilde{\mathcal{W}}_3(s_1,s_2;z) = \frac{2\widetilde{\Phi}(z)}{1-z} \widetilde{(\Psi^2)}(s_1+s_2-1) + \widetilde{\Phi}(z)\mathcal{H}(s_1,s_2;z).
$$
\end{proposition}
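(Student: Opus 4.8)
The plan is to isolate the singular behavior of $\widetilde w(s_1,s_2;z)$ as $x\to y$ by splitting the region of integration and Taylor-expanding $\Psi$ near the diagonal. First I would write, for $x,y$ in the support of $\Psi$,
\[
\Psi(x)\Psi(y) x^{s_1-1} y^{s_2-1} = F(x,y),
\]
a smooth, compactly supported function of $(x,y)$ on $(0,\infty)^2$, and the quantity to understand is $\int\int F(x,y)|x-y|^{-z}\,dx\,dy$. The only obstruction to absolute convergence and analyticity in $z$ is the diagonal $x=y$; away from a neighborhood of the diagonal the integrand is smooth and compactly supported, so that contribution is entire in $z$ (and in $s_1,s_2$), and bounded on compacta. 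So I would fix a smooth cutoff localizing to $|x-y|\le 1$, say, and on that piece change variables to $t=x-y$, $x=x$, integrating $t$ over a bounded interval and $x$ over the (bounded) support. One then writes $F(x,x+t) = F(x,x) + t\,R(x,t)$ with $R$ smooth and compactly supported; the remainder term $\int\int t R(x,t)|t|^{-z}\,dx\,dt$ converges absolutely and is analytic for $\Re z < 2$, contributing to $\mathcal H$, bounded on $\Re z\le 2-\delta$ with $s_1,s_2$ in a compact set. The leading term $\int F(x,x)\big(\int |t|^{-z}\,dt\big)\,dx$ produces, after doing the $t$-integral over the symmetric interval $[-c,c]$, a factor $\tfrac{2}{1-z}c^{1-z}$, i.e. $\tfrac{2}{1-z}$ plus something entire; and $\int F(x,x)\,dx = \int \Psi(x)^2 x^{s_1+s_2-2}\,dx = \widetilde{(\Psi^2)}(s_1+s_2-1)$. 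Collecting, $\widetilde w(s_1,s_2;z) = \tfrac{2}{1-z}\widetilde{(\Psi^2)}(s_1+s_2-1) + \mathcal H(s_1,s_2;z)$ with $\mathcal H$ analytic for all $s_1,s_2$ and $0<\Re z<2$ (the pole at $z=1$ having been removed), and bounded on $0\le \Re z\le 2-\delta$ for $s_1,s_2$ in a fixed compact region. I should double-check that the cutoff at $|x-y|=c$ can be chosen independent of $s_1,s_2$ and that all the pieces glue to an honest analytic function; the cleanest bookkeeping is to note first that $\widetilde w$ is manifestly analytic for $\Re z<1$, then verify the displayed identity there, and observe the right-hand side analytically continues the left past $\Re z = 1$.

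The statement about $\widetilde{\mathcal W}_3$ is then immediate: since $\widetilde{\mathcal W}_3(s_1,s_2;z) = \widetilde\Phi(z)\widetilde w(s_1,s_2;z)$ (already noted in the excerpt, valid for $\Re z<1$), multiplying the decomposition of $\widetilde w$ by $\widetilde\Phi(z)$ gives
\[
\widetilde{\mathcal W}_3(s_1,s_2;z) = \frac{2\widetilde\Phi(z)}{1-z}\widetilde{(\Psi^2)}(s_1+s_2-1) + \widetilde\Phi(z)\mathcal H(s_1,s_2;z),
\]
and the right side is meromorphic in $\Re z<2$ — in fact it has at most a simple pole at $z=1$, since $\widetilde\Phi$ is entire (Proposition \ref{Mellin_decay}) and $\mathcal H$ is analytic for $0<\Re z<2$, while near $z=0$ and for $\Re z\le 0$ one checks directly from the original integral that $\widetilde{\mathcal W}_3$ is analytic there too, so the two expressions agree and furnish the continuation.

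The main obstacle is purely the diagonal analysis: making precise that subtracting the diagonal value $F(x,x)$ gains exactly one power of $|x-y|$, uniformly in the parameters, so that the remainder integral converges for $\Re z<2$ rather than only $\Re z<1$, and that the resulting $\mathcal H$ is genuinely analytic (not merely continuous) in $z$ on the strip $0<\Re z<2$ — this requires differentiating under the integral sign, which is justified by the absolute convergence just established, together with the uniform bound on $0\le\Re z\le 2-\delta$. Everything else (the entire-ness of the off-diagonal piece, the explicit $t$-integral, the identification of $\int F(x,x)\,dx$ with the Mellin transform of $\Psi^2$, and the passage to $\widetilde{\mathcal W}_3$) is routine.
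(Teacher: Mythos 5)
Your proposal is essentially the same as the paper's proof: both isolate the diagonal singularity by subtracting the value of $\Psi(x)\Psi(y)x^{s_1-1}y^{s_2-1}$ at $y=x$ (the paper's $\Omega$ is exactly $F(x,y)-F(x,x)$ in your notation), observe that the difference is $O(|x-y|)$ uniformly for parameters in compacta so the remainder integral converges and is analytic for $\Re z < 2$, and compute the leading term $\int F(x,x)\,dx \cdot \int |t|^{-z}\,dt$ explicitly to extract the $\frac{2}{1-z}\widetilde{(\Psi^2)}(s_1+s_2-1)$ pole. The paper skips your auxiliary cutoff at $|x-y|\le c$ (it just does the $y$-integral explicitly over the support $[u_1,u_2]$, which is already bounded), but the structure and the key ideas are identical.
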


\begin{proof}  The function $\Psi$ is smooth and compactly supported in $(0,\infty)$; say, the support of $\Psi$ is in $[u_1,u_2] \subset (0,\infty)$.  For $s_2$ restricted to a compact region, clearly 
	$$
	\Psi(y)y^{s_2-1} = \Psi(x) x^{s_2-1} + O(|x-y|). 
	$$
Put
$$
	\Omega(x,y;s_1,s_2):= \Psi(x)x^{s_1-1}(\Psi(y)y^{s_2-1} - \Psi(x)x^{s_2-1}).
$$
	Then
	\begin{equation}
	\label{decomposition1}
	\widetilde{w}(s_1,s_2; z) = \int_{u_1}^{u_2}\int_{u_1}^{u_2} \Psi(x)^2 \frac{x^{s_1+s_2-2}}{|x-y|^z}\,dxdy + \int_{u_1}^{u_2}\int_{u_1}^{u_2}\frac{\Omega(x,y; s_1, s_2)}{|x-y|^z}\,dxdy.
	\end{equation}
	For any $\epsilon > 0$,
	$$
	\mathcal{I}_\epsilon(s_1,s_2;z):= \int_{u_1}^{u_2} \int_{u_1}^{u_2} \frac{\Omega(x,y;s_1,s_2)}{|x-y|^z} \mathbf{1}_{(\epsilon,\infty)}(|x-y|)\, dx dy
	$$
is analytic for all $s_1, s_2, z$. 
Since $\Omega(x,y;s_1,s_2) = O(|x-y|)$ for $s_1$, $s_2$, $z$ restricted to any compact region with $\Re z < 2$ at all points in the region, we have that $\mathcal{I}_\epsilon(s_1, s_2; z)$ tends uniformly to
	$$
	\mathcal{I}(s_1,s_2;z):= \int_{u_1}^{u_2} \int_{u_1}^{u_2} \frac{\Omega(x,y;s_1,s_2)}{|x-y|^z}\, dx dy,
	$$
	so $\mathcal{I}(s_1,s_2;z)$ as well is analytic for all $s_1$, $s_2$, and $z$ with $\Re z < 2$. That $\mathcal{I}$ is bounded for $s_1, s_2$ in a fixed compact region with $\Re z \leq 2-\delta$ is evident. 
	
	Moreover, for $0 < \Re z < 1$,
	\begin{align}
	\label{decomposition2}
	\notag \int_{u_1}^{u_2}\int_{u_1}^{u_2} \Psi(x)^2 \frac{x^{s_1+s_2-2}}{|x-y|^z}\,dxdy =& \int_{u_1}^{u_2}\Big( \int_0^{u_2-x} \frac{dt}{t^z} + \int_0^{x-u_1} \frac{dt}{t^z}\Big) \Psi(x)^2 x^{s_1+s_2-2}\,dx \\
	\notag =& \frac{2}{1-z} \int_0^\infty \Psi(x)^2 x^{s_1+s_2-2}\,dx \\
	\notag &+ \int_{u_1}^{u_2} \Big( \frac{(u_2-x)^{1-z}-1}{1-z} + \frac{(x-u_1)^{1-z}-1}{1-z}\Big) \Psi(x)^2 x^{s_1+s_2-2}\,dx\\
	=& \frac{2}{1-z} \widetilde{(\Psi^2)}(s_1+s_2-1) + \mathcal{J}(s_1,s_2;z).
	\end{align}
Clearly $\mathcal{J}$ is analytic for all $s_1, s_2,$ and $z$, and bounded for $s_1, s_2, z$ restricted to any compact region.
	
	Combining the two decompositions \eqref{decomposition1} and \eqref{decomposition2} above, letting $\mathcal{H} = \mathcal{I}+\mathcal{J}$ gives the lemma.
\end{proof}

In the next two propositions, we let $s_1 = \sigma_1 + i t_1$ and $s_2 = \sigma_2 + i t_2$.

\begin{proposition}
\label{W3_decay}
Fix positive constants $\epsilon$ and $A$. Uniformly for $-A \leq \Re z \leq 1-\epsilon$ and $-A \leq \sigma_1, \sigma_2 < A$,
$$
\widetilde{\mathcal{W}}_3(s_1,s_2;z) \ll_\ell \frac{1}{1+|\Im z|^\ell}\,\frac{1}{1+|t_1+t_2|^\ell} \,\frac{1}{1+|t_1-t_2|^{1-\Re z}}.
$$
for all positive integers $\ell$.
\end{proposition}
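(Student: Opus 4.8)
The plan is to start from the factorization $\widetilde{\mathcal{W}}_3(s_1,s_2;z) = \widetilde{\Phi}(z)\widetilde{w}(s_1,s_2;z)$ and the meromorphic continuation furnished by Proposition \ref{Mellin_AC}, namely
$$
\widetilde{\mathcal{W}}_3(s_1,s_2;z) = \frac{2\widetilde{\Phi}(z)}{1-z}\widetilde{(\Psi^2)}(s_1+s_2-1) + \widetilde{\Phi}(z)\mathcal{H}(s_1,s_2;z).
$$
The factor $\widetilde{\Phi}(z)$ is handled immediately by Proposition \ref{Mellin_decay}: since $\Phi$ is fixed, smooth and compactly supported in $(0,\infty)$, we have $\widetilde{\Phi}(z) \ll_\ell (1+|\Im z|)^{-\ell}$ uniformly for $-A \le \Re z \le A$, which supplies the $(1+|\Im z|^\ell)^{-1}$ factor in the claimed bound. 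Since $\widetilde{(\Psi^2)}(w)$ is also the Mellin transform of a fixed smooth compactly supported function, Proposition \ref{Mellin_decay} gives $\widetilde{(\Psi^2)}(s_1+s_2-1) \ll_\ell (1+|t_1+t_2|)^{-\ell}$, which is stronger than the $(1+|t_1+t_2|^\ell)^{-1}$ needed, and on $\Re z \le 1-\epsilon$ the prefactor $2/(1-z)$ is $O(1/\epsilon)$; so the first term satisfies the desired bound with room to spare (note in particular $(1+|t_1-t_2|^{1-\Re z})^{-1}\le 1$).

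The real content is therefore the bound on $\widetilde{\Phi}(z)\mathcal{H}(s_1,s_2;z)$, and within that the decay in $t_1 + t_2$ and in $t_1 - t_2$ of $\mathcal{H}$ itself; Proposition \ref{Mellin_AC} only asserts boundedness of $\mathcal{H}$ in compact $s_1,s_2$-regions, so I will need to revisit its proof to extract decay as $|t_1|,|t_2|\to\infty$. Recall $\mathcal{H}=\mathcal{I}+\mathcal{J}$ where $\mathcal{J}(s_1,s_2;z) = \int_{u_1}^{u_2}\big(\tfrac{(u_2-x)^{1-z}-1}{1-z}+\tfrac{(x-u_1)^{1-z}-1}{1-z}\big)\Psi(x)^2 x^{s_1+s_2-2}\,dx$, which is a one-dimensional Mellin-type integral in the combined variable $s_1+s_2$; integrating by parts in $x$ repeatedly (the bracketed function and $\Psi^2$ are smooth on the compact support, vanishing to high order at the endpoints since $\Psi$ does) yields $\mathcal{J} \ll_\ell (1+|t_1+t_2|)^{-\ell}$, and of course trivially $\mathcal{J}\ll 1 \le (1+|t_1-t_2|)^{-(1-\Re z)}\cdot(\text{something}\ge 1)$ — wait, that last inequality goes the wrong way, so for $\mathcal{J}$ I simply use that $(1+|t_1-t_2|^{1-\Re z})^{-1}\le 1$ and the bound $\mathcal{J}\ll_\ell (1+|t_1+t_2|)^{-\ell}$ already dominates. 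The genuinely delicate term is $\mathcal{I}(s_1,s_2;z) = \int_{u_1}^{u_2}\int_{u_1}^{u_2} \frac{\Psi(x)x^{s_1-1}(\Psi(y)y^{s_2-1}-\Psi(x)x^{s_2-1})}{|x-y|^z}\,dx\,dy$. Here I expect to:
\begin{itemize}
\item Obtain decay in $|t_1+t_2|$ by a change of variables to $x$ and $v=y/x$ (or $v=y-x$), writing the inner integrand so that $x$ appears as $x^{s_1+s_2-2}$ times a function smooth in $x$ with compact support in $(0,\infty)$, and integrating by parts $\ell$ times in $x$; the only subtlety is that the factor $|x-y|^{-z}$ introduces an $x$-dependence of the form $|1-v|^{-z}$ (after the substitution) which is $x$-independent, so the differentiations in $x$ hit only smooth compactly-supported factors.
\item Obtain the decay $(1+|t_1-t_2|^{1-\Re z})^{-1}$ by exploiting the difference structure: on the region $|x-y|$ small the bracket $\Psi(y)y^{s_2-1}-\Psi(x)x^{s_2-1}$ is $O(|s_2||x-y|)$, and more precisely one integrates by parts once in the $y$-variable against $|x-y|^{-z}$, trading a factor $|t_1-t_2|$ against a power $|x-y|^{1-z}$; iterating this (it is essentially the standard stationary-phase-free argument that a boundary-type integral $\int |x-y|^{-z} e^{i(t_1-t_2)\log y}$ decays like $|t_1-t_2|^{-(1-\Re z)}$) produces the asserted power saving. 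Combining the two independent integrations by parts — in the ``center of mass'' direction for $t_1+t_2$ and in the ``difference'' direction for $t_1-t_2$ — gives the product bound for $\mathcal{I}$.
\end{itemize}

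Finally I multiply the bound for $\mathcal{H}$ by $\widetilde{\Phi}(z)\ll_\ell(1+|\Im z|)^{-\ell}$ to insert the $z$-decay, and combine with the estimate for the polar term above; since all estimates are uniform for $s_1,s_2$ in the strip $-A\le\sigma_1,\sigma_2<A$ and for $-A\le\Re z\le 1-\epsilon$ (the $2/(1-z)$ and the $|x-y|^{1-z}$ gains are controlled precisely by $\Re z\le 1-\epsilon$), this yields the stated inequality for every positive integer $\ell$. The main obstacle is the second bullet: extracting the sharp exponent $1-\Re z$ (rather than merely $O(1)$ or an integer power) in the $t_1-t_2$ decay, since this requires carefully balancing a single integration by parts across the integrable singularity $|x-y|^{-z}$ rather than the brute-force repeated integration by parts that handles the other directions; one must check that the boundary terms at $x=y$ vanish (they do, because $\Re z<1$ makes $|x-y|^{1-z}$ vanish there) and that the resulting integral still converges after the parts step.
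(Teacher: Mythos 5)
Your route is genuinely different from the paper's. The paper does not go through the $\mathcal{H}=\mathcal{I}+\mathcal{J}$ decomposition of Proposition \ref{Mellin_AC} at all. Instead, it substitutes $\Psi(x)=x^{A+1}e^{-x}\Psi_1(x)$ and uses Mellin inversion on $\Psi_1$ to reduce $\widetilde{w}(s_1,s_2;z)$ to the explicit kernel
$$
E(s_1,s_2;z)=\int_0^\infty\int_0^\infty \frac{e^{-x}e^{-y}x^{s_1-1}y^{s_2-1}}{|x-y|^z}\,dx\,dy,
$$
which after a change of variables evaluates as $\Gamma(s_1+s_2-z)$ times a one-dimensional integral that is expressed exactly via Euler's integral for $_2F_1$; the bound then follows from Barnes's contour representation of $_2F_1$ together with Stirling. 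What this buys is that the fractional exponent $1-\Re z$ appears automatically from the Gamma-factor ratios $\Gamma(1-z)\Gamma(s_j)/\Gamma(1+s_j-z)$ via Stirling --- there is no need to engineer a Van der Corput style split to get a non-integer power of $|t_1-t_2|$. Your approach, by contrast, avoids special functions entirely and stays close to the definitions, which is conceptually simpler, but shifts all the difficulty onto a direct stationary-phase-type estimate.

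The concrete gap is exactly the one you flag in the second bullet. ``Iterating'' integration by parts cannot produce a fractional exponent: each parts step gains a whole power of $|t_1-t_2|^{-1}$. What you actually need is the standard split of the $y$-integral into the near-diagonal region $|x-y|\le 1/|t_1-t_2|$ (estimated trivially, contributing $\ll \int_{|u|\le 1/T}|u|^{-\Re z}\,du \ll T^{\Re z-1}$) and the far region $|x-y|>1/|t_1-t_2|$ (estimated after a \emph{single} integration by parts in $y$, which gains a factor $1/(|t_1-t_2||x-y|)$ and still leaves a convergent integral because $\Re z<1$ gives $\int_{|u|>1/T}|u|^{-\Re z - 1}\,du \ll T^{\Re z}$, and one must then divide by $|t_1-t_2|$). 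You hint at this in the final sentence but it contradicts the ``iterating'' language, so be sure the written version does the split. Two further points to track: (a) differentiating $|x-y|^{-z}$ brings down factors of $z$, so all your integration-by-parts constants are polynomial in $|\Im z|$ --- harmless in the end since they are absorbed by the superpolynomial decay of $\widetilde{\Phi}(z)$, but it has to be noted; (b) after the substitution $v=y/x$ the $x$-dependence is $x^{s_1+s_2-1-z}$, not $x^{s_1+s_2-2}$, so parts in $x$ gives decay in $|t_1+t_2-\Im z|$ rather than $|t_1+t_2|$; again this is fixed by combining with $\widetilde{\Phi}$'s decay, but the step should be made explicit, and one must also check that the $x$-derivatives do not destroy the vanishing of the numerator at $v=1$ that makes the $|1-v|^{-\Re z}$ singularity integrable (they do not, by Taylor expansion, but this is worth a line). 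With those repairs the plan should close, though it is a longer and more delicate computation than the paper's hypergeometric evaluation, and the constants are harder to keep uniform in $s_1,s_2$ over the strip $-A\le\sigma_1,\sigma_2<A$.
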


\begin{proof}
We prove a closely related 
bound for the function $\widetilde{w}(s_1,s_2;z)$ first.  
Let $\Psi_1(x) = x^{-A-1}e^x \Psi(x)$, so that like $\Psi$, the function $\Psi_1$ is smooth and compactly supported, with support in $(0,\infty)$. By Mellin inversion and Fubini, for $-A \leq \Re z \leq 1-\epsilon$ and $-A \leq \sigma_1, \sigma_2 < A$, we have
\begin{equation}
\label{w_bound}
\widetilde{w}(s_1,s_2;z) = \frac{1}{(2\pi i)^2}\int_{(0)}\int_{(0)} \widetilde{\Psi_1}(\zeta_1) \widetilde{\Psi_1}(\zeta_2) E(A+1+s_1-\zeta_1,\,A+1+s_2-\zeta_2;\,z)\, d\zeta_1d\zeta_2,
\end{equation}
where
$$
E(s_1,s_2;z):= \int_0^\infty \int_0^\infty \frac{e^{-x} e^{-y} x^{s_1-1} y^{s_2-1}}{|x-y|^z}\, dxdy,
$$
is plainly well-defined and bounded for $\Re s_1, \Re s_2 \geq 1$ and $\Re z \leq 1-\epsilon$. We will bound $E(s_1,s_2;z)$, which will translate into a bound for $\widetilde{w}(s_1,s_2;z)$ owing to the rapid decay of $\widetilde{\Psi_1}$. In the definition of $E$, we make the change of variables $x = y\lambda,\; dx = y\, d\lambda$, followed by $\tau = y(1+\lambda),\; d\tau = (1+\lambda)\,dy$, to obtain 
$$
E(s_1,s_2;z) = \Gamma(s_1+s_2-z) \int_0^\infty \frac{\lambda^{s_1-1}}{(1+\lambda)^{s_1+s_2-z} |1-\lambda|^z}\, d\lambda.
$$
By splitting the integral into two pieces and using Euler's integral representation for hypergeometric functions \cite[Thm. 2.2.1]{AnAsRo}, we see that 
\begin{align*}
\int_0^\infty \frac{\lambda^{s_1-1}}{(1+\lambda)^{s_1+s_2-z} |1-\lambda|^z}\, d\lambda =& \;\int_0^1 \frac{\lambda^{s_1-1}}{(1+\lambda)^{s_1+s_2-z}(1-\lambda)^z}\, d\lambda \\
&+ \int_1^\infty \frac{\lambda^{s_1-1}}{(1+\lambda)^{s_1+s_2-z}(\lambda-1)^z}\, d\lambda\\
=& \;\frac{\Gamma(1-z)\Gamma(s_1)}{\Gamma(1+s_1-z)} \,\setlength\arraycolsep{1pt}
\,_2 F_1\Big(\begin{matrix}s_1 &,\; s_1+s_2-z\\& 1+ s_1-z\quad\end{matrix};-1\Big) \\
&+ \frac{\Gamma(1-z)\Gamma(s_2)}{\Gamma(1+s_2-z)} \,\setlength\arraycolsep{1pt}
\,_2 F_1\Big(\begin{matrix}s_2 &,\; s_1+s_2-z\\& 1+ s_2-z\quad\end{matrix};-1\Big).
\end{align*}
We have made a change of variables $\lambda = 1/\ell$ in the second integral in order to simplify it.

On the other hand, by Barnes' integral for the hypergeometric function \cite[Thm 2.4.1]{AnAsRo}, for $1\leq \Re a, \Re b, \Re c \leq B$ for a fixed constant $B$,
$$
\setlength\arraycolsep{1pt}
\,_2 F_1\Big(\begin{matrix}a &,\; b\\& c\quad\end{matrix};-1\Big) = \frac{1}{2\pi i} \int_{-i \infty}^{i\infty} \frac{\Gamma(a+s)}{\Gamma(a)} \frac{\Gamma(b+s)}{\Gamma(b)}\frac{\Gamma(c)}{\Gamma(c+s)} \Gamma(-s)\,ds,
$$
where the path of integration is a straight line except for a small (radius $1/2$ say) semi-circle around the left of the origin in order to miss the singularity of $\Gamma(-s)$. By Stirling's formula \cite[Thm C.1]{MoVa} we see this is bounded for $1\leq \Re a, \Re b, \Re c \leq B$. Hence, for $1 \leq \Re s_1, \Re s_2 \leq B$ and $-A \leq \Re z \leq 1-\epsilon$,
$$
E(s_1,s_2;z) \ll \Gamma(s_1+s_2-z)\Big(\frac{\Gamma(1-z)\Gamma(s_1)}{\Gamma(1+s_1-z)}+ \frac{\Gamma(1-z)\Gamma(s_2)}{\Gamma(1+s_2-z)}\Big).
$$
Making use of Stirling's formula, this is
\begin{multline*}
\ll_\ell \frac{1}{1+|t_1+t_2-z|^\ell} \Big( \frac{(|t_1|+1)^{\sigma_1-1/2}(1+|\Im z|)^{1-\Re z -1/2}}{(1+|t_1-\Im z|)^{\sigma_1 + 1 - \Re z -1/2}} \\+ \frac{(|t_2|+1)^{\sigma_2-1/2}(1+|\Im z|)^{1-\Re z -1/2}}{(1+|t_2-\Im z|)^{\sigma_2 + 1 - \Re z -1/2}}\Big).
\end{multline*}

Hence, for $s_1, s_2, z$ as in the proposition, applying this in \eqref{w_bound} gives us the same bound for $w(s_1,s_2;z)$, with $\sigma_1$ and $\sigma_2$ replaced by $\sigma_1+A+1$ and $\sigma_2+A+1$ respectively.

Finally, because 
$$
\widetilde{\mathcal{W}}_3(s_1,s_2;z) = \widetilde{\Phi}(z)\widetilde{w}(s_1,s_2;z),
$$
and because $\widetilde{\Phi}(z) \ll_\ell 1/(1+|\Im z|^\ell)$, this bound for $\widetilde{w}$ gives us
$$
\widetilde{\mathcal{W}}_3(s_1,s_2;z) \ll_\ell \frac{1}{1+|\Im z|^\ell}\,\frac{1}{1+|t_1+t_2|^\ell} \Big( \frac{1}{1+|t_1|^{1-\Re z}} + \frac{1}{1+|t_2|^{1-\Re z}}\Big),
$$
which is equivalent to the claimed bound.
\end{proof}

\begin{proposition}
\label{W2_bound}
Fix $A>0$. Uniformly for $-A \leq \sigma_1, \sigma_2 \leq A$ and $u \geq 0$, we have 
$$
\widetilde{\mathcal{W}}_2(s_1,s_2;u) \ll_p \frac{u^{p-1}}{1+\max(|t_1|,|t_2|)^p}, 
$$ 
for all $p \geq 1$.  Further, there is a constant $K>0$ such that $\widetilde{\mathcal W}_2(s_1,s_2;u)=0$ unless $u\ge K$.  
\end{proposition}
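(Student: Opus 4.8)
My plan is to treat the two assertions separately: the support statement by inspection, and the decay bound by repeated integration by parts.

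The support statement is immediate. Write $\mathrm{supp}\,\Psi\subseteq[u_1,u_2]$ and $\mathrm{supp}\,\Phi\subseteq[v_1,v_2]$ with $0<u_1<u_2$ and $0<v_1<v_2$. On the support of the integrand in \eqref{W2tilde} one has $x,y\in[u_1,u_2]$, hence $|x-y|\le u_2-u_1$, while $\Phi(u|x-y|)\ne0$ forces $u|x-y|\ge v_1$; combining gives $u\ge v_1/(u_2-u_1)=:K>0$, so $\widetilde{\mathcal{W}}_2(s_1,s_2;u)=0$ whenever $u<K$ (in particular at $u=0$, where $\Phi(0)=0$).

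For the size bound I would integrate by parts, using that the integrand of \eqref{W2tilde} is smooth in $x$ and in $y$ (the only potential trouble is at $x=y$, but there $\Phi(u|x-y|)$ vanishes identically since $0\notin\mathrm{supp}\,\Phi$) and is supported on a compact subset of $(0,\infty)^2$. First I would record the trivial estimate: since $x^{\sigma_1-1},y^{\sigma_2-1}\ll_A 1$ on $[u_1,u_2]$ and, for each fixed $y$, the set $\{x:u|x-y|\in[v_1,v_2]\}$ is a union of two intervals of total length $\le 2(v_2-v_1)/u$, one gets $\widetilde{\mathcal{W}}_2(s_1,s_2;u)\ll\min(1/u,1)\ll_p u^{p-1}$ for $u\ge K$; this already settles the claim when $\max(|t_1|,|t_2|)\le1$. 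When $|t_1|=\max(|t_1|,|t_2|)>1$, I would integrate by parts $p$ times in $x$ in the inner integral $\int_0^\infty\Psi(x)\Phi(u|x-y|)x^{s_1-1}\,dx$, using $x^{s_1-1}=s_1^{-1}\frac{d}{dx}x^{s_1}$ repeatedly; there are no boundary terms, and one picks up the factor $\big(\prod_{j=0}^{p-1}(s_1+j)\big)^{-1}$, of modulus $\le|t_1|^{-p}$, against $\int_0^\infty\frac{d^p}{dx^p}\big(\Psi(x)\Phi(u|x-y|)\big)x^{s_1+p-1}\,dx$. Each $x$-derivative landing on $\Phi(u|x-y|)$ yields a factor $u$ (as $\frac{d}{dx}|x-y|=\pm1$ away from $x=y$), so the $p$-th derivative is $\ll_p\max(1,u^p)$ on its support, whose $x$-measure is still $\le 2(v_2-v_1)/u$; with $x^{\sigma_1+p-1}\ll_{p,A}1$ and an $O(1)$ outer integral in $y$ this gives $\widetilde{\mathcal{W}}_2(s_1,s_2;u)\ll_p u^{p-1}|t_1|^{-p}$ for $u\ge K$, using that $\max(1,u^p)\min(1/u,1)\ll_p u^{p-1}$ there. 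The symmetric argument in $y$ handles $|t_2|=\max>1$, and combining the three cases yields $\widetilde{\mathcal{W}}_2(s_1,s_2;u)\ll_p u^{p-1}/(1+\max(|t_1|,|t_2|)^p)$.

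The one delicate point, and the thing to watch, is the bookkeeping of the powers of $u$: the extra $u$'s from differentiating $\Phi(u|x-y|)$ must be balanced against the $\sim 1/u$ length of the $x$-support, and one must integrate by parts in only one of the two variables—doing it in both would cost $u^{2p}$ while recovering only one factor $1/u$, giving $u^{2p-1}$ in place of $u^{p-1}$. Everything else is routine. Alternatively, the decay bound could be deduced from Proposition \ref{W3_decay} by Mellin inversion in $u$ along the line $\Re z=1-p$, since $\widetilde{\mathcal{W}}_2(s_1,s_2;u)=\frac{1}{2\pi i}\int_{(1-p)}\widetilde{\mathcal{W}}_3(s_1,s_2;z)u^{-z}\,dz$; but the direct argument above is cleaner and delivers the support claim at the same time.
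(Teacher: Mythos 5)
Your proof is correct and follows the same integration-by-parts strategy the paper uses; the paper's own proof is essentially a two-sentence summary of exactly this argument, with the support claim handled identically via the supports of $\Phi$ and $\Psi$. Your observation that one should integrate by parts in only one of the two variables (to avoid producing $u^{2p-1}$ rather than $u^{p-1}$) is sound bookkeeping that the paper leaves implicit.
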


\begin{proof}
Integrating by parts $p$ times furnishes the bound on $\widetilde{\mathcal W}_2(s_1,s_2;u)$; the exponent of $u$ is $p-1$ because 
of the $\Phi(u|x-y|)$ term in the definition of ${\mathcal W}$ which forces $|x-y|$ to be on the scale of $1/u$.  
If $u$ is sufficiently small, then ${\mathcal W}(x,y;u)$ vanishes for all $x$ and $y$ -- this being a consequence of the support of $\Phi$ and $\Psi$. 
\end{proof}





\section{The diagonal contribution, and $\Delta_k$ for $X = o(Q)$}
\label{sec:diagonal}

In this section we estimate the diagonal sum $\mathcal{D}_k(Q;X)$, obtaining a good estimate for all ranges of $Q$ and $X$. In the easy range $X\leq \eta Q$, for a certain constant $\eta$, this gives an asymptotic formula for $\Delta_k(Q;X)$.

\subsection{Fixed moduli}

By a standard contour shift argument, we can estimate the diagonal contributions for individual $q$.  

\begin{proposition}
\label{Diagonal_fixedmoduli}
Fix a constant $C$. For all $\epsilon > 0$,
$$
\sum_{(n,q)=1} d_k(n)^2 \Psi\Big(\frac{n}{X}\Big)^2 = \frac{a_k(q)}{(k^2-1)!} X (\log X)^{k^2-1} + O_\epsilon(X(\log X)^{k^2-2+\epsilon}),
$$
uniformly for $q \leq X^C$.
\end{proposition}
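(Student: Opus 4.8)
The plan is to evaluate the sum by Mellin inversion followed by a contour shift, keeping the dependence on $q$ explicit throughout. Since $\Psi$ is smooth and compactly supported in $(0,\infty)$, so is $\Psi^2$, and hence by \eqref{InverseMellin} and absolute convergence we have, for any $c>1$,
$$
\sum_{(n,q)=1} d_k(n)^2 \Psi\Big(\tfrac nX\Big)^2 \;=\; \frac{1}{2\pi i}\int_{(c)} \widetilde{(\Psi^2)}(s)\, X^s\, D_q(s)\, ds, \qquad D_q(s):=\sum_{\substack{n\ge1\\(n,q)=1}}\frac{d_k(n)^2}{n^s}.
$$
I would then factor $D_q(s)=\zeta(s)^{k^2}B_q(s)$, where
$$
B_q(s):=\prod_{p\nmid q}\Big((1-p^{-s})^{k^2}\sum_{\ell\ge0}d_k(p^\ell)^2p^{-\ell s}\Big)\cdot\prod_{p\mid q}(1-p^{-s})^{k^2}.
$$
Because $d_k(p)^2=k^2$ coincides with the $p^{-s}$-coefficient of $(1-p^{-s})^{-k^2}$, each factor of the first product is $1+O_k(p^{-2\sigma})$, so that product is analytic and uniformly bounded for $\Re s\ge\tfrac12+\eta$ (any fixed $\eta>0$), while the second product is at most $2^{k^2\omega(q)}\ll_\epsilon q^\epsilon$ there by $\omega(q)\ll\log q/\log\log q$. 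Thus $D_q(s)$ is analytic in $\Re s>\tfrac12$ apart from a pole of order $k^2$ at $s=1$, and $B_q(s)\ll_\epsilon q^\epsilon$ uniformly for, say, $\Re s\ge\tfrac34$.

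Next I would move the contour to $\Re s=1-\delta$ for a small fixed $\delta\in(0,\tfrac14)$. The horizontal connecting segments contribute nothing in the limit, since $\widetilde{(\Psi^2)}$ decays faster than any polynomial by Proposition \ref{Mellin_decay} while $\zeta(s)^{k^2}B_q(s)$ grows at most polynomially in $|\Im s|$; crossing $s=1$ produces $\Res_{s=1}\big(\widetilde{(\Psi^2)}(s)D_q(s)X^s\big)$. On the new line $\zeta(1-\delta+it)\ll1+|t|$, so the remaining integral is $\ll_\epsilon X^{1-\delta}q^\epsilon\ll X^{1-\delta}X^{C\epsilon}$, which on taking $\epsilon$ small enough is $O(X^{1-\delta/2})$ and therefore absorbed into the claimed error term.

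It remains to read off the residue. Writing $\zeta(s)^{k^2}=(s-1)^{-k^2}E(s)$ with $E$ analytic near $s=1$ and $E(1)=1$, the residue equals $\tfrac1{(k^2-1)!}\big(\tfrac{d}{ds}\big)^{k^2-1}\big[E(s)B_q(s)\widetilde{(\Psi^2)}(s)X^s\big]_{s=1}$, i.e.\ $X$ times a polynomial of degree $k^2-1$ in $\log X$. Differentiating $X^s=Xe^{(s-1)\log X}$ all $k^2-1$ times produces the leading term $\tfrac1{(k^2-1)!}E(1)B_q(1)\widetilde{(\Psi^2)}(1)X(\log X)^{k^2-1}=\tfrac{a_k(q)}{(k^2-1)!}X(\log X)^{k^2-1}$, since $B_q(1)=\lim_{s\to1}(s-1)^{k^2}D_q(s)=a_k(q)$ and $\widetilde{(\Psi^2)}(1)=\int\Psi^2=1$. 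Every remaining term carries a factor $(\log X)^{k^2-1-m}$ with $m\ge1$ and a coefficient assembled from $E^{(j)}(1)$ and $\widetilde{(\Psi^2)}^{(j)}(1)$ (which depend only on $k$ and $\Psi$) and from the $B_q^{(j)}(1)$. For the last I would use that $B_q(1)=a_k(q)\le a_k$ is bounded, while for $j\ge1$ one has $\big(\tfrac{d}{ds}\big)^{j}\log B_q(s)\big|_{s=1}\ll_{j,k}1+\sum_{p\mid q}(\log p)^j/p\ll_{j,k}(\log\log X)^{j}$ (the extremal case being $q=\prod_{p\le y}p$ with $y\ll\log X$, by Mertens); repeated differentiation then yields $B_q^{(j)}(1)\ll_{j,k}(\log\log X)^{O_j(1)}$. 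Hence the lower-order terms total $O\big(X(\log X)^{k^2-2}(\log\log X)^{O(1)}\big)=O_\epsilon\big(X(\log X)^{k^2-2+\epsilon}\big)$, completing the proof.

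The only step demanding genuine care, as opposed to a routine manipulation, is keeping this $q$-dependence of the subleading coefficients below $X^\epsilon$ uniformly for $q\le X^C$; the analytic input is made easy by the rapid decay of $\widetilde{(\Psi^2)}$, which is precisely why it suffices to shift to a fixed vertical line $\Re s=1-\delta$ rather than to run a full Selberg--Delange contour.
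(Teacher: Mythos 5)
Your argument is correct and follows essentially the same route as the paper: Mellin inversion, factoring the Dirichlet series as $\zeta(s)^{k^2}$ times a $q$-dependent factor (your $B_q(s)$ is precisely the paper's $F(s)f_q(s)$, written as a single Euler product), a contour shift past $s=1$, and controlling the lower-order Taylor coefficients at $s=1$ by $\sum_{p\mid q}(\log p)^j/p\ll(\log\log q)^j$. The only cosmetic differences are that the paper shifts to $\Re s=\tfrac12+\epsilon$ (yielding the sharper $O(X^{1/2+\epsilon}q^\epsilon)$ noted after the proposition) whereas you stop at a fixed $\Re s=1-\delta$, which still suffices for the stated error term.
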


The proof gives an asymptotic formula of the form $X P(q;\log X) + O_\epsilon( X^{1/2+\epsilon}q^\epsilon)$ uniformly for all $q$, where $P(q;\cdot)$ is a $k^2-1$ degree polynomial with (somewhat complicated) coefficients that depend on $q$.

\begin{proof}
Mellin inversion gives for any $\alpha > 1$ 
\begin{equation}
\label{Mellin_diagonal}
\sum_{(n,q)=1} d_k(n)^2 \Psi\Big(\frac{n}{X}\Big)^2 = \frac{1}{2\pi i} \int_{(\alpha)} \widetilde{(\Psi^2)}(s) X^s \sum_{(n,q)=1} \frac{d_k(n)^2}{n^s}\,ds.
\end{equation}
For $\Re s > 1$ we may write 
$$
\sum_{(n,q)=1} \frac{d_k(n)^2}{n^s} = \prod_{p \nmid q} \sum_{\ell=0}^\infty \binom{\ell+k-1}{k-1}^2 p^{-\ell s} = \zeta(s)^{k^2}  F(s) f_q(s),
$$
where 
$$
f_q(s):= \prod_{p \mid q} \Big( \sum_{\ell =0}^{\infty} \binom{\ell + k-1}{k-1}^2 p^{-\ell s} \Big)^{-1},
$$
and
$$
F(s):= \prod_{p} \Big(1-\frac{1}{p^s}\Big)^{k^2} \sum_{\ell=0}^\infty \binom{\ell+k-1}{k-1}^2 p^{-\ell s}.
$$
The Euler product defining $F(s)$ converges absolutely in $\Re s >1/2$, and so in the region $\Re s \ge 1/2+\epsilon$ 
we have $F(s) \ll_{\epsilon} 1$.  The product defining $f_q(s)$ converges when $\Re s >0$, and in the 
region $\Re s \ge 1/2$ we have   
$$
f_q(s) \ll \prod_{p \mid q} \Big(1 + O\Big(\frac{1}{p^{1/2}}\Big)\Big)  \ll_\epsilon q^\epsilon. 
$$

Shifting the contour in \eqref{Mellin_diagonal} from $\alpha$ to $1/2+\epsilon$ (and noting that $|\zeta(s)|$ grows only polynomially in 
$|s|$ for $\Re s\ge 1/2$; see \cite[Ch. V]{Ti}) ,   we thus have
\begin{align}
\label{Res1}
\notag \sum_{(n,q)=1} d_k(n)^2 \Psi\Big(\frac{n}{X}\Big)^2 &=  \Res_{s=1}\;\widetilde{(\Psi^2)}(s) X^s \zeta(s)^{k^2} f_q(s) F(s)  \\
\notag &\quad+ \frac{1}{2\pi i} \int_{(1/2+\epsilon)} \widetilde{(\Psi^2)}(s) X^s \zeta(s)^{k^2} f_q(s) F(s)\,ds \\
&= \Res_{s=1}\;\widetilde{(\Psi^2)}(s) X^s \zeta(s)^{k^2} f_q(s) F(s)+ O_\epsilon( X^{1/2+\epsilon} q^\epsilon).
\end{align}
Expanding $f_q(s)$ into its Taylor series around $1$, the residue above may be written as 
\begin{equation} 
\label{Res2}
\sum_{j=0}^{k^2-1} \frac{f_q^{(j)}(1) }{j!} R_{k^2-1-j}(\log X), 
\end{equation} 
where $R_{k^2-1-j}$ is a polynomial of degree $k^2-1-j$ with coefficients determined by the Laurent expansion of 
$\widetilde{(\Psi^2)}\zeta(s)^{k^2} F(s)$ (and thus independent of $q$).  

For a prime $p$ consider the Euler factor in the definition of $f_q(s)$, call it temporarily $e_p(s)$.  This may be expanded into 
a power series around $1$:   
$$
e_p(s) = e_p(1) \big( 1+ b_1(p) (s-1) + b_2(p) (s-1)^2 +\ldots \big), 
$$ 
with $b_j(p) \ll_j (\log p)^j/p$.  Multiplying this expansion over all $p|q$ we find 
$$ 
f_q(s) = f_q(1) \big( 1+ c_1(q) (s-1) + c_2(q) (s-1)^2 + \ldots \big), 
$$ 
where we may see that $c_j(q) \ll_j (\log \log q)^j$.  The bound on $c_j(q)$ follows from the bound on $b_j(p)$ 
together with the bound $\sum_{p|q} (\log p)^j/p \ll (\log \log q)^{j}$ (attained for primorials $q$).   Using these 
observations in \eqref{Res2}, we see that the quantity in \eqref{Res2} is 
\begin{align*}
&f_q(1) R_{k^2 -1} (\log X) + O((\log X)^{k^2-2} \log \log q) \\
=& f_q(1)  \Big( \widetilde{(\Psi^2)}(1) F(1) \frac{(\log X)^{k^2-1}}{(k^2-1)!} + O((\log X)^{k^2-2}) \Big)
+ O((\log X)^{k^2-2+\epsilon}). 
\end{align*} 
Noting that $\widetilde{(\Psi^2)}(1)=1$ and that $f_q(1)F(1) =a_k(q)$, the proposition follows. 
 \end{proof}

\subsection{Averaged moduli}

By using standard contour integration techniques as above one may see that
\begin{proposition}
\label{averaging_constants}
For any $\epsilon > 0$,
$$
\sum_q a_k(q) \Phi\Big(\frac{q}{Q}\Big) = \widetilde{a}_k Q + O_\epsilon(Q^{1/2+\epsilon}).
$$
\end{proposition}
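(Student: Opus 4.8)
The plan is to recognize the Dirichlet series $\sum_q a_k(q) q^{-s}$ as $\zeta(s)$ times an Euler product that converges in a half-plane extending past $\Re s = 1$, and then run exactly the Mellin-transform-and-contour-shift argument used in the proof of Proposition~\ref{Diagonal_fixedmoduli}.

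First, by Mellin inversion as in \eqref{InverseMellin} and absolute convergence,
$$
\sum_q a_k(q)\,\Phi\Big(\frac qQ\Big) = \frac{1}{2\pi i}\int_{(2)} \widetilde\Phi(s)\, Q^s\, D(s)\, ds, \qquad D(s) := \sum_{q\ge 1}\frac{a_k(q)}{q^s}.
$$
By the Euler-product description \eqref{constants_euler}, $a_k(q)/a_k = \prod_{p\mid q}\mathfrak a_p^{-1}$ is multiplicative and takes the value $\mathfrak a_p^{-1}$ on every positive power of $p$, so
$$
D(s) = a_k\prod_p\Big(1 + \mathfrak a_p^{-1}\frac{p^{-s}}{1-p^{-s}}\Big) = a_k\,\zeta(s)\prod_p\Big(1-(1-\mathfrak a_p^{-1})p^{-s}\Big).
$$
Since $\binom{k}{k-1}^2 = k^2$ we have $\mathfrak a_p = 1 + k^2 p^{-1} + O(p^{-2})$, hence $1-\mathfrak a_p^{-1}\ll 1/p$, and so $E(s):=\prod_p(1-(1-\mathfrak a_p^{-1})p^{-s})$ converges absolutely, is analytic, and is $\ll_\epsilon 1$ in the half-plane $\Re s\ge \tfrac12+\epsilon$.

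Now shift the contour from $\Re s = 2$ to $\Re s = \tfrac12 + \epsilon$. The horizontal pieces vanish in the limit because $\widetilde\Phi$ decays faster than any polynomial in the relevant strip (Proposition~\ref{Mellin_decay}) while $\zeta$ and $E$ grow at most polynomially there. The only pole crossed is the simple pole of $\zeta(s)$ at $s=1$, contributing
$$
\widetilde\Phi(1)\, Q\, a_k\, E(1) = Q\, a_k\prod_p\Big(1 - \frac1p(1-\mathfrak a_p^{-1})\Big) = \widetilde a_k\, Q,
$$
where we used $\widetilde\Phi(1) = \int_0^\infty\Phi = 1$ and the second identity in \eqref{constants_euler}. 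On the shifted line, $\widetilde\Phi(s)$ decays rapidly (Proposition~\ref{Mellin_decay}), $|\zeta(s)|$ grows only polynomially (see \cite[Ch.~V]{Ti}), and $E(s)\ll_\epsilon 1$; hence the remaining integral is $\ll_\epsilon Q^{1/2+\epsilon}$. Adding the two contributions gives the proposition. (Shifting instead to $\Re s = \tfrac12$ even yields the stronger error term $O(Q^{1/2})$, which we shall not need.)

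The only step that is not entirely routine is matching the residue at $s=1$ against the closed form for $\widetilde a_k$ in \eqref{constants_euler}; this is pure bookkeeping, and no genuine analytic obstacle arises.
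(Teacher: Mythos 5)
Your proof is correct and is essentially the argument the paper has in mind: the paper explicitly says the proposition follows ``by using standard contour integration techniques as above'' (i.e.\ as in the proof of Proposition~\ref{Diagonal_fixedmoduli}) and leaves the details to the reader, and you have supplied exactly those details, factoring $\sum_q a_k(q)q^{-s} = a_k\,\zeta(s)\,E(s)$ with $E$ bounded and analytic past $\Re s=1$, shifting the contour, and matching the residue at $s=1$ with $\widetilde a_k$ via \eqref{constants_euler}.
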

We leave details to the reader. 

By combining Propositions \ref{Diagonal_fixedmoduli} and \ref{averaging_constants} we obtain the diagonal piece of the asymptotic formula \eqref{smooth_variance2} in Theorem \ref{main_thm}.
\begin{lemma}[Diagonal terms]
\label{diagonal}
Fix $\delta > 0$. For $X \geq Q^\delta$ and for any $\epsilon > 0$,
$$
\mathcal{D}_k(Q;X) = \frac{\widetilde{a}_k}{(k^2-1)!} QX (\log X)^{k^2-1} + O_\epsilon(Q^{1/2+\epsilon} X (\log X)^{k^2-1} + QX (\log X)^{k^2-2+\epsilon}).
$$
Hence for $c:=\frac{\log X}{\log Q}$ uniformly for $c \geq \delta$,
$$
\mathcal{D}_k(Q;X) \sim \widetilde{a}_k \frac{c^{k^2-1}}{(k^2-1)!} QX(\log Q)^{k^2-1}.
$$
\end{lemma}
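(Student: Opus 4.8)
The plan is to deduce the lemma by summing Proposition \ref{Diagonal_fixedmoduli} over $q$ against the weight $\Phi(q/Q)$ and then invoking Proposition \ref{averaging_constants}. Since $\Phi$ is compactly supported in $(0,\infty)$, only moduli $q \asymp Q$ contribute to $\mathcal{D}_k(Q;X)$ in \eqref{D_sum}; in particular, under the hypothesis $X \ge Q^\delta$ every such $q$ satisfies $q \ll Q \ll X^{1/\delta}$, so upon choosing the constant $C$ in Proposition \ref{Diagonal_fixedmoduli} large enough in terms of $\delta$ and the support of $\Phi$, that proposition applies uniformly to all $q$ occurring in the sum.

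First I would substitute the asymptotic of Proposition \ref{Diagonal_fixedmoduli} into \eqref{D_sum}, obtaining
$$
\mathcal{D}_k(Q;X) = \frac{X(\log X)^{k^2-1}}{(k^2-1)!}\sum_q a_k(q)\Phi\Big(\frac{q}{Q}\Big) + O_\epsilon\Big(X(\log X)^{k^2-2+\epsilon}\sum_q \Phi\Big(\frac{q}{Q}\Big)\Big).
$$
Because $\Phi$ is bounded and supported on an interval of length $\asymp 1$, we have $\sum_q \Phi(q/Q) \ll Q$, so the error term above is $O_\epsilon(QX(\log X)^{k^2-2+\epsilon})$. Next, applying Proposition \ref{averaging_constants} gives $\sum_q a_k(q)\Phi(q/Q) = \widetilde{a}_k Q + O_\epsilon(Q^{1/2+\epsilon})$; multiplying through by $X(\log X)^{k^2-1}/(k^2-1)!$ produces the claimed main term together with a further error $O_\epsilon(Q^{1/2+\epsilon}X(\log X)^{k^2-1})$. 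Collecting the two error contributions yields the displayed equality.

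For the asymptotic statement, note that $\widetilde{a}_k > 0$ (each Euler factor in \eqref{constants_euler} is positive and the product converges), so the main term has size $\asymp QX(\log X)^{k^2-1}$. The first error term is smaller by a factor $Q^{-1/2+\epsilon}$ and the second by a factor $(\log X)^{-1+\epsilon}$; taking $\epsilon < 1/2$ both tend to $0$ as $X,Q\to\infty$, which gives $\mathcal{D}_k(Q;X) \sim \frac{\widetilde{a}_k}{(k^2-1)!}QX(\log X)^{k^2-1}$. Finally, substituting $\log X = c\log Q$, so that $(\log X)^{k^2-1} = c^{k^2-1}(\log Q)^{k^2-1}$, rewrites this as $\widetilde{a}_k \frac{c^{k^2-1}}{(k^2-1)!}QX(\log Q)^{k^2-1}$, as claimed.

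I do not expect a genuine obstacle here: all of the analytic content was already carried out in Propositions \ref{Diagonal_fixedmoduli} and \ref{averaging_constants}, which handle the relevant contour shifts and the slow growth of the coefficients $f_q^{(j)}(1)$ and of $a_k(q)$. The only points needing attention are (i) verifying that the uniformity range $q \le X^C$ in Proposition \ref{Diagonal_fixedmoduli} covers the entire support of $\Phi(\cdot/Q)$ — which is precisely where the hypothesis $c \ge \delta$ enters — and (ii) the routine bookkeeping that a per-modulus error $O_\epsilon(X(\log X)^{k^2-2+\epsilon})$ accumulates to $O_\epsilon(QX(\log X)^{k^2-2+\epsilon})$ over the $\asymp Q$ relevant moduli.
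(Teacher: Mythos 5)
Your proof is correct and is precisely what the paper does: the paper states the lemma immediately after Propositions \ref{Diagonal_fixedmoduli} and \ref{averaging_constants} with the single remark ``By combining Propositions \ref{Diagonal_fixedmoduli} and \ref{averaging_constants} we obtain the diagonal piece...'', leaving exactly the bookkeeping you have written out. Your attention to point (i), that $X \ge Q^\delta$ ensures the relevant $q \asymp Q$ lie within the uniformity range $q \le X^C$ of Proposition \ref{Diagonal_fixedmoduli}, is the one detail worth making explicit, and you did.
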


\subsection{Estimating $\Delta_k$ for small $X$}

In the range $X=o(Q)$, the condition $m\equiv n \Mod q$ forces $m=n$.  Thus, in this range ${\mathcal G}_k(Q;X)=0$. 
Moreover it is straightforward to see that (when $X=o(Q)$ and $k\ge 2$)  
$$ 
{\mathcal B}_k(Q;X) \ll \big( X (\log X)^{k-1} \big)^2 \log Q \ll X^2 (\log Q)^{2k-1}  = o(XQ (\log Q)^{k^2-1}).
$$ 
Using our evaluation of ${\mathcal D}_k$, we conclude the following. (To fix our imagination, rather than just $X = o(Q)$, we take $X \leq Q/\log \log Q$.) 

\begin{lemma}[Theorem \ref{main_thm} for small $c$]
\label{small_c}
Fix $\delta > 0$. For $Q^\delta \leq X \leq Q/\log \log Q$, with $c:=\frac{\log X}{\log Q}$ we have 
$$
\Delta_k(Q;X) \sim \widetilde{a}_k \frac{c^{k^2-1}}{(k^2-1)!} QX (\log Q)^{k^2-1}.
$$
\end{lemma}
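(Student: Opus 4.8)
\textbf{Proof proposal for Lemma \ref{small_c}.}

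The plan is to combine the three facts already assembled in this section. Since we are in the range $X \le Q/\log\log Q = o(Q)$, any two integers $m, n$ with $m \equiv n \Mod q$ and both in the support of $\Psi(\cdot/X)$ must satisfy $|m - n| < cX < q$ for $Q$ large (here $c$ is an absolute constant coming from the supports of $\Psi$; I reuse the letter loosely), hence $m = n$. Therefore $\mathcal{G}_k(Q;X) = 0$ identically in this range, and the decomposition \eqref{summary} collapses to $\Delta_k(Q;X) = \mathcal{D}_k(Q;X) - \mathcal{B}_k(Q;X)$.

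Next I would dispose of $\mathcal{B}_k(Q;X)$ as a genuine error term. From the definition \eqref{B_sum}, bound the inner sum trivially: $\sum_{(n,q)=1} d_k(n)\Psi(n/X) \ll X(\log X)^{k-1}$ by the standard estimate for the summatory function of $d_k$ (a Mellin-inversion/contour-shift argument of exactly the type in Proposition \ref{Diagonal_fixedmoduli}). Since $1/\phi(q) \le 1$ and $\Phi(q/Q)$ is supported on $q \asymp Q$, summing over $q$ contributes a factor $\ll Q$, but in fact we only need the crude bound $\sum_q \Phi(q/Q)/\phi(q) \ll \log Q$. This gives $\mathcal{B}_k(Q;X) \ll X^2 (\log X)^{2k-2} \log Q \ll X^2 (\log Q)^{2k-1}$. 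Because $X \le Q/\log\log Q$ and $k \ge 2$, we have $X^2 (\log Q)^{2k-1} \le XQ (\log Q)^{2k-1}/\log\log Q$, and since $2k - 1 \le k^2 - 1$ for $k \ge 2$, this is $o\big(XQ(\log Q)^{k^2-1}\big)$.

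Finally, invoke Lemma \ref{diagonal}: in the range $Q^\delta \le X$ we have the asymptotic
$$
\mathcal{D}_k(Q;X) \sim \widetilde{a}_k \frac{c^{k^2-1}}{(k^2-1)!}\, QX(\log Q)^{k^2-1},
$$
whose main term is of exact order $XQ(\log Q)^{k^2-1}$ (using $\delta \le c$ so that $c^{k^2-1}$ is bounded below). Subtracting the $\mathcal{B}_k$ contribution, which we have just shown is of strictly smaller order, yields
$$
\Delta_k(Q;X) = \mathcal{D}_k(Q;X) - \mathcal{B}_k(Q;X) \sim \widetilde{a}_k \frac{c^{k^2-1}}{(k^2-1)!}\, QX(\log Q)^{k^2-1},
$$
which is the claim. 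There is no real obstacle here: the only point requiring a moment's care is checking that the exponent $2k-1$ in the bound for $\mathcal{B}_k$ is dominated by $k^2-1$ (true for all $k \ge 2$, with equality only at $k=2$, where the extra $1/\log\log Q$ saving from the constraint $X \le Q/\log\log Q$ is exactly what makes $\mathcal{B}_k$ negligible), together with noting that $\mathcal{G}_k$ vanishes rather than merely being small.
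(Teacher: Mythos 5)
Your argument is correct and matches the paper's proof essentially line for line: show $\mathcal{G}_k$ vanishes identically because the supports of $\Psi$ and $\Phi$ together with $X = o(Q)$ force $m = n$, bound $\mathcal{B}_k \ll (X(\log X)^{k-1})^2 \log Q = o(XQ(\log Q)^{k^2-1})$, and invoke Lemma \ref{diagonal}. If anything you are slightly more explicit than the paper about why the borderline case $k=2$ (where $2k-1 = k^2-1$) still goes through thanks to the $1/\log\log Q$ saving built into the hypothesis.
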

We will see later (see Section 7) by explicitly calculating $\gamma_k(c)$ that this establishes Theorem \ref{main_thm} for $c \in (0,1)$. 

Since we have proved an estimate for diagonal sums that is uniform in $q$, this may also be used in exactly the same way to establish a smoothed version of Conjecture \ref{APConj} for $c\in(0,1)$. On the other hand, to go past this diagonal analysis, we will really require the averaging in $q$.

\section{Off-diagonal asymptotics}
\label{sec:offdiagonal}

We turn now to the terms $\mathcal{MG}_k-\mathcal{B}_k$ in the decomposition \eqref{summary}. For $X = o(Q)$ we have just shown that these are error terms, but for $X \gg Q$ they instead make a contribution to the leading order of $\Delta_k$. We will prove both an unconditional and a conditional asymptotic formula for $X \geq Q/\log \log Q$.

Let $P_k(c)$ be the polynomial
\begin{equation}
\label{P_k}
P_k(c):= \Res_{z=0} \Res_{\substack{s_1=0 \\ s_2=0}} e^z e^{c(s_1+s_2-z)} \frac{(s_1-z)^k (s_2-z)^k}{z^2 s_1^k s_2^k (s_1+s_2-z)^{k^2}}.
\end{equation}
\begin{remark}
In Proposition \ref{P_k_expand}, we will record a slightly more traditional representation for the polynomial $P_k(c)$, involving a (complicated) sum of binomial coefficients.
\end{remark}

Our sole purpose in the rest of this section is to demonstrate the following:

\begin{lemma} 
\label{off_diagonal_unconditional}
Fix $\delta > 0$. For $X$ in the range $Q/\log \log Q \leq X \leq Q^{(k+2)/k-\delta}$
\begin{equation}
\label{MG-B_asymptote}
\mathcal{MG}_k(Q;X) - \mathcal{B}_k(Q;X) = \widetilde{a}_k P_k\Big(\frac{\log X}{\log Q}\Big) XQ (\log Q)^{k^2-1} + O_\epsilon(XQ (\log X)^{k^2-2+\epsilon}),
\end{equation}
for any $\epsilon > 0$.

Moreover, on the Riemann hypothesis \eqref{MG-B_asymptote} is true in the larger range $Q/\log \log Q \leq X \leq Q^{2-\delta}$.
\end{lemma}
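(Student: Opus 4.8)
The plan is to evaluate $\mathcal{MG}_k$ and $\mathcal{B}_k$ by Mellin transforms followed by contour shifting, in the style of the asymptotic large sieve computations of \cite{CoIwSo} and \cite{ChLi}; the one structural feature to keep in mind is that $\mathcal{MG}_k$ and $\mathcal{B}_k$ are each of size $\asymp X^2(\log X)^{k^2-1}$... more precisely $\asymp X^2(\log X)^{2k-2}$, which for $X/Q$ large dwarfs the claimed answer, so these pieces must be arranged to cancel at the level of the Mellin integrals before the asymptotic is read off. First I would insert the three-variable Mellin inversion for $\mathcal{W}$ from Section \ref{sec:analytic} into \eqref{MG_sum}: writing $m=gM$, $n=gN$ with $g=(m,n)$ and $(M,N)=1$, and using $\Psi(m/X)\Psi(n/X)\Phi(|M-N|/(rQ))=\mathcal{W}(gM/X,gN/X;X/(grQ))$, this gives
$$\mathcal{MG}_k(Q;X)=\frac{1}{(2\pi i)^3}\int_{(\alpha)}\int_{(\beta)}\int_{(\gamma)}\widetilde{\mathcal{W}}_3(s_1,s_2;z)\,X^{s_1+s_2-z}Q^{z}\,\mathcal{Z}(s_1,s_2;z)\,dz\,ds_2\,ds_1,$$
with $\alpha,\beta$ large and $\gamma<0$, where
$$\mathcal{Z}(s_1,s_2;z):=\sum_{g\ge1}\ \sum_{\substack{M,N\ge1\\(M,N)=1}}\frac{d_k(gM)d_k(gN)}{g^{s_1+s_2-z}M^{s_1}N^{s_2}}\sum_{\substack{a,r\ge1\\a\mid g\\(ar,MN)=1}}\frac{\mu(a)\,r^{z}}{\phi(ar)};$$
Proposition \ref{W3_decay} gives absolute convergence and justifies the interchange of sum and integral, and will be reused to estimate the tails. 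In parallel, starting from $\sum_{(n,q)=1}d_k(n)\Psi(n/X)=\tfrac1{2\pi i}\int_{(\alpha)}\widetilde{\Psi}(s)X^{s}\zeta(s)^k\prod_{p\mid q}(1-p^{-s})^k\,ds$, squaring (using that $\Psi$ is real together with Schwarz reflection for $\zeta$) and summing over $q$ against $\Phi(q/Q)/\phi(q)$, I would record
$$\mathcal{B}_k(Q;X)=\frac{1}{(2\pi i)^2}\int_{(\alpha)}\int_{(\beta)}\widetilde{\Psi}(s_1)\widetilde{\Psi}(s_2)\,X^{s_1+s_2}\,\zeta(s_1)^k\zeta(s_2)^k\,\mathcal{B}(s_1,s_2)\,ds_2\,ds_1,$$
with $\mathcal{B}(s_1,s_2):=\sum_q\Phi(q/Q)\phi(q)^{-1}\prod_{p\mid q}(1-p^{-s_1})^k(1-p^{-s_2})^k$.

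The crux is the analytic structure of $\mathcal{Z}$. Using multiplicativity of $d_k$, $\mu$ and $\phi$, I would expand $\mathcal{Z}$ as an Euler product and, separating each local factor according to whether a prime divides $M$, divides $N$, or divides neither, isolate a product of zeta functions times a correction $\mathcal{A}(s_1,s_2;z)$ given by an Euler product converging absolutely — hence analytic and uniformly bounded — in a suitable polydisc-like region with $\Re(s_1+s_2-z)$ and $\Re s_1,\Re s_2$ a little larger than $\tfrac12$ and $\Re z$ not too large. The factor $\zeta(s_1+s_2-z)^{k^2}$ is the expected source of the exponent $k^2-1$ in $(\log q)^{k^2-1}$; a factor $\zeta(1-z)$ records the essentially free inner sum over $r$; and a factor of the shape $\zeta(s_1)^k\zeta(s_2)^k$ — already visible in the bound $\mathcal{MG}_k\asymp X^2(\log X)^{2k-2}$ — produces the poles at $s_1=1$ and $s_2=1$. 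Getting the exact shape right (in particular any $\zeta(s_i-z)$-type numerator factors forced by the coprimality of $a,r$ with $M,N$, which should correspond to the numerator $(s_1-z)^k(s_2-z)^k$ in \eqref{P_k}, and the order-$2$ pole in the $z$-variable of \eqref{P_k}), and tracking all coprimality conditions cleanly, is the main technical obstacle; it is the analogue here of the arithmetic-factor bookkeeping of \cite{CoIwSo,ChLi}. A by-product of this analysis is a formula for $\widetilde a_k$ as the value of $\mathcal{A}$ at the relevant point, matching \eqref{constant_avg}.

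With $\mathcal{Z}$ understood, I would move the $z$-contour to the right, across the poles at $z=0$ and $z=1$. The residue at $z=0$ comes from the $\zeta(1-z)$ factor; since $\widetilde{\mathcal{W}}_3(s_1,s_2;0)=\widetilde{\Phi}(0)\widetilde{\Psi}(s_1)\widetilde{\Psi}(s_2)$ (because $\widetilde{w}(s_1,s_2;0)=\widetilde{\Psi}(s_1)\widetilde{\Psi}(s_2)$), this residue has exactly the shape of the Mellin integral for $\mathcal{B}_k$ up to a genuinely smaller remainder, so the $X^2(\log X)^{2k-2}$-sized parts of $\mathcal{MG}_k$ and $\mathcal{B}_k$ cancel in the difference — and it is essential to check that this cancellation is exact, not merely to leading order. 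The residue at $z=1$ comes from the simple pole $1/(1-z)$ of $\widetilde{\mathcal{W}}_3$; combined with the pole of $\zeta(s_1+s_2-z)^{k^2}$ along the hyperplane $s_1+s_2-z=1$, then shifting the $s_1,s_2$-contours and collecting the residues there and at $s_1=1$, $s_2=1$, it produces a main term of the asserted size $\widetilde a_k\,P_k(\log X/\log Q)\,XQ(\log Q)^{k^2-1}$, the powers $Q^{z}$ and $X^{s_1+s_2-z}$ turning (after normalizing $\log Q$) into the exponentials $e^{z}$ and $e^{c(s_1+s_2-z)}$ of \eqref{P_k}. Verifying that this iterated residue equals the closed form \eqref{P_k} is a nontrivial but purely formal computation that I would carry out last; it is also exactly what feeds into the comparison with Conjecture \ref{APConj} in Section \ref{sec:comparison}.

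Finally, the leftover contour integrals are bounded by combining the super-polynomial decay of $\widetilde{\mathcal{W}}_3$ in the imaginary directions (Propositions \ref{W3_decay} and \ref{Mellin_decay}) with upper bounds for $\zeta(s)^{k^2}$ on vertical lines slightly to the right of $\Re s=\tfrac12$ (convexity bounds in general; when $k=2$ the classical fourth-moment estimate for $\zeta$ suffices). Optimizing how far the $z$- and $s_i$-contours can be pushed and balancing the resulting error against the main term $XQ(\log Q)^{k^2-1}$ is precisely what forces the restriction $X\le Q^{(k+2)/k-\delta}$ — in particular giving the full range $c<2$ already unconditionally when $k=2$, since $(k+2)/k=2$ there. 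Under GRH (or merely the Lindel\"of bound $\zeta(\tfrac12+it)\ll_\epsilon|t|^{\epsilon}$ together with $\zeta(\tfrac12+\epsilon+it)^{-1}\ll_\epsilon|t|^{\epsilon}$) one may push the $s_i$-contours all the way to $\Re s_i=\tfrac12+\epsilon$, which improves the error and extends the range to $X\le Q^{2-\delta}$, yielding the conditional half of the lemma. I expect the Euler-product analysis of $\mathcal{Z}$ — and the verification that the $\mathcal{B}_k$-sized contributions cancel exactly and that the extracted residue equals \eqref{P_k} — to be the real work; the error estimates, though intricate, follow the established template.
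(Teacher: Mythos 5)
Your proposal matches the paper's proof in essence: the same three-variable Mellin setup with your $\mathcal{Z}$ being the paper's $T_k$, the same factorization into $\zeta$-factors (the paper's identity \eqref{T_factor}), the same cancellation of the $z=0$ contribution against $\mathcal{B}_k$, the same residue near $z=1$ producing $P_k$, and the same error bounds via moments of $\zeta$ on the critical line (fourth moment plus convexity unconditionally, Lindel\"of under GRH), with the same threshold $(k+2)/k$ emerging. The only stylistic differences from the paper are that it shifts the $s_1,s_2$-contours to small circles around $1$ \emph{before} shifting $z$ -- which localizes the $z$-poles coming from $\zeta(s_1+s_2-z)^{k^2}$ near $z=1$ and makes the subsequent $z$-shift to a zero-free-region contour cleaner than shifting $z$ first -- and that it evaluates the $q$-sum in $\mathcal{B}_k$ via an auxiliary Dirichlet series $F_k(s_1,s_2;z)$ together with the identity $\lim_{z\to0}(-zT_k)=\lim_{z\to0}(zF_k)$, rather than by working with the $q$-sum directly as you propose.
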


\begin{proof}
Because this proof is somewhat lengthy, we break it up into three steps.

\textbf{Step 1: Approximating $\mathcal{MG}_k$ by (two) contour integrals.}
We will make use of Mellin transforms to approximate $\mathcal{MG}_k(Q;X)$, pulling off one term that will ultimately match up with $\mathcal{B}_k(Q;X)$ and another that will give rise to the main term in Lemma \ref{off_diagonal_unconditional}.

Note that
$$
\mathcal{MG}_k(Q;X) = \sum_{m,n} \sum_{\substack{a,r \\ a|g \\ (ar, MN)=1}} d_k(m) d_k(n) \frac{\mu(a)}{\phi(ar)} \mathcal{W}\Big(\frac{m}{X},\frac{n}{X}; \frac{X}{grQ}\Big).
$$
Defining
$$
T_k(s_1,s_2;z):= \sum_{m,n} \sum_{\substack{a,r \\ a|g \\ (ar, MN)=1}} \frac{d_k(m)}{m^{s_1}} \frac{d_k(n)}{n^{s_2}} \frac{\mu(a)}{\phi(ar)} (gr)^z,
$$
for $\Re s_1, \Re s_2 > 1$ and $\Re z < 0$, we have for any $A > 0$ (a parameter which will be chosen more exactly later) and small $\epsilon > 0$
\begin{equation}
\label{MG_Mellin}
\mathcal{MG}_k(Q:X) = \frac{1}{(2\pi i)^3} \int_{(-A)} \int_{(1+\epsilon)} \int_{(1+\epsilon)} \widetilde{\mathcal{W}}_3(s_1,s_2;z) X^{s_1} X^{s_2} (Q/X)^z \,T_k(s_1, s_2, z)\, ds_1 ds_2 dz.
\end{equation}
(Since the three variable integral is absolutely convergent, one needn't worry at this point about the order of the contours over $s_1, s_2,$ and $z$.)

For $\Re s_1, \Re s_2 > 1$ and $\Re z < 0$, note by multiplicativity that
$$
T_k(s_1, s_2; z) = \prod_p \sum_{\mu, \nu, \alpha, \rho} \frac{d_k(p^\mu) d_k(p^\nu)}{p^{\mu s_1} p^{\nu s_2}} \frac{\mu(p^\alpha)}{\phi(p^{\alpha+\rho})} (p^{\min(\mu,\nu)+\rho})^z,
$$
where the sum is over all tuples of non-negative integers $(\mu,\nu,\alpha,\rho)$ satisfying
$$
\alpha \leq \min(\mu,\nu), \quad \textrm{and either}\; \alpha = \rho = 0,\;\textrm{or}\; \mu = \nu.
$$
This is a complicated expression, but we note that by an inspection of the Euler product,
\begin{equation}
\label{T_factor}
T_k(s_1,s_2;z) = \zeta(1-z)\zeta(2-z) \frac{\zeta(s_1)^k \zeta(s_2)^k \zeta(s_1+s_2-z)^{k^2}}{\zeta(1+s_1-z)^k \zeta(1+s_2-z)^k} V_k(s_1,s_2;z),
\end{equation}
where $V_k(s_1,s_2;z)$ is a function that is analytic and bounded in the regions
$$
\Re s_1, \Re s_2 \geq 1/2 +c,\quad \Re(s_2+s_2-z) \geq 1/2 + c, \quad \Re z \leq 1+c,
$$
for any $c$ chosen with $0 < c < 1/2$.

We return to the expression \eqref{MG_Mellin} and shift the contours of $s_1$ and $s_2$ each from the line $(1+\epsilon)$ to $\Gamma$, where $\Gamma$ is a contour running in straight line-segments from

$$
(1+\epsilon)-i\infty \;\; \textrm{to} \;\; (1+\epsilon) - iY  \;\; \textrm{to} \;\; (1/2+\epsilon) - iY  \;\; \textrm{to} \;\; (1/2+\epsilon) + iY  \;\; \textrm{to} \;\; (1+\epsilon)+iY  \;\; \textrm{to} \;\; (1+\epsilon)+i\infty,
$$
\vspace{1pt}
where $Y$ is a (large) parameter to be chosen later. Shifting the contour of $s_1$ first and then $s_2$, we see
\begin{multline}
\label{Mellin_eval_s1s2}
\int_{(1+\epsilon)}\int_{(1+\epsilon)} \widetilde{\mathcal{W}}_3(s_1,s_2;z)  X^{s_1+s_2-z}\, T_k(s_1,s_2;z)\,ds_1 ds_2 \\=  \Res_{s_1,s_2=1} \Big(\widetilde{\mathcal{W}}_3(s_1,s_2;z)  X^{s_1+s_2-z}\, T_k(s_1,s_2;z)\Big) + ErrorTerm,
\end{multline}
where
\begin{align*}
ErrorTerm &=\Big( \int_{(1+\epsilon)} \int_{\Gamma} + \int_\Gamma \int_{(1+\epsilon)} - \int_\Gamma\int_\Gamma  \Big)\widetilde{\mathcal{W}}_3(s_1,s_2;z)  X^{s_1+s_2-z}\, T_k(s_1,s_2;z)\, ds_1ds_2 . 
\end{align*} 
Applying Proposition \ref{W3_decay} to bound $\widetilde{\mathcal{W}}_3$ (with $\ell$ being an integer larger than $1+A$), 
and using \eqref{T_factor} and that $\Re z = -A$, we 
may bound the integrand above by 
$$ 
\ll  \frac{1}{1+|z|^{1+A}} \frac{|\zeta(s_1)|^k |\zeta(s_2)|^k} {(1+|t_1+t_2|^{1+A}) (1+|t_1-t_2|^{1+A}) } X^{\Re(s_1+s_2)+A}. 
$$ 
If either $s_1$ or $s_2$ has real part $1+\epsilon$, then the corresponding factor of $|\zeta(s_j)|^k$ is $\ll 1$, and 
we may integrate out this variable.  If neither $s_1$ nor $s_2$ has real part $1+\epsilon$ then bound $X^{\Re(s_1+s_2)}|\zeta(s_1)\zeta(s_2)|^k$ 
by $X^{2\Re s_1}|\zeta(s_1)|^{2k} +X^{2\Re s_2} |\zeta(s_2)|^{2k}$ and then integrate out the variable not involving a power of $\zeta$.  In this way 
we obtain 
\begin{equation} 
\label{ET1}
ErrorTerm \ll \frac{1}{1+|z|^{1+A}} \int_{\Gamma} \frac{ X^{\Re s + 1 + \epsilon + A} |\zeta(s)|^{k} +X^{2\Re s +A} |\zeta(s)|^{2k}}{1+|s|^{A+1}} |ds|. 
\end{equation} 

In the part of $\Gamma$ with $|\Im s| > Y$ (so $\Re s =1+\epsilon$) we use $|\zeta(s)|\ll 1$ and see that the contribution of this 
part to the integral in \eqref{ET1} is $\ll X^{2+2\epsilon+A}/Y^A$.  To estimate the horizontal lines in $\Gamma$, we use the convexity bound 
$|\zeta(s)| \ll (1+|s|)^{(1-\Re s)/2+\epsilon}$, and obtain that these line segments contribute $\ll (X^{2+2\epsilon +A} Y^{k\epsilon} + X^{3/2+2\epsilon +A}Y^{k/4+\epsilon} + X^{1+2\epsilon+A}Y^{k/2+\epsilon})
/Y^{A+1}$.  Lastly, to bound the integrals on the line segment from $1/2+\epsilon -iY$ to $1/2+\epsilon +iY$, we split the integral into dyadic blocks and note that (for any $r>0$)
$$ 
\int_{T}^{2T} |\zeta(1/2+\epsilon +it)|^{r} dt \ll T^{1+\epsilon} + T^{r/4+\epsilon}, 
$$ 
which follows from the convexity bound for $\zeta(s)$ together with the fourth moment.   It follows that for $A > \epsilon$ the contribution of this line segment to \eqref{ET1} is 
$$ 
\ll X^{3/2+2\epsilon+A} (1 + Y^{k/4-1-A +\epsilon}) + X^{1+2\epsilon +A} (1+ Y^{k/2-1-A +\epsilon}). 
$$ 
Putting all these estimates together we conclude that 
\begin{equation} 
\label{ET2} 
ErrorTerm \ll \frac{1}{1+|z|^{1+A}} \Big( \frac{X^{2+2\epsilon+A}Y^{k\epsilon}}{Y^A} + \frac{X^{3/2+2\epsilon+A} Y^{k/4+\epsilon}}{Y^{1+A}} + 
\frac{X^{1+2\epsilon+A} Y^{k/2+\epsilon}}{Y^{1+A}} +X^{3/2+2\epsilon+A} \Big). 
\end{equation} 
If the Riemann hypothesis is assumed then $|\zeta(s)| \ll (1+|s|)^{\epsilon}$ when $\Re (s)\ge 1/2$, and we obtain the better bound 
\begin{equation} 
\label{ET3} 
ErrorTerm \ll \frac{1}{1+|z|^{1+A}} \Big( \frac{X^{2+2\epsilon+A}}{Y^A} + X^{3/2+2\epsilon+A}\Big) . 
\end{equation}

When $k= 2$, we let $Y\to \infty$ and take any $A > \epsilon$ in \eqref{ET2}, and conclude that 
$$
ErrorTerm \ll X^{3/2+2\epsilon+A}/(1+|z|^{1+A}). 
$$ 
 For $k>2$, we take $Y=X^{2/(k-2)}$ and $A= (k-2)/4$ to conclude again the same bound for $ErrorTerm$ (with $2\epsilon$ replaced by $2\epsilon+2\epsilon k/(k-2)$, a quantity still always smaller than $5\epsilon$).  
%
%
%
Returning to \eqref{Mellin_eval_s1s2}, we have (with the above choice for $A$ when $k>2$) 
\begin{multline*}
\mathcal{MG}_k(Q;X) = \frac{1}{2\pi i} \int_{(-A)} Q^z \Res_{s_1,s_2=1} \widetilde{\mathcal{W}}(s_1,s_2;z)  X^{s_1+s_2-z}\, T_k(s_1,s_2;z)\,dz \\+ O_\epsilon(Q^{-A}X^{3/2+5\epsilon+A}).
\end{multline*}
For $k= 2$ we take $A=2\epsilon$, and see that the error term above is $O(XQ)$ provided $X\le Q^{2-\delta}$ and $\epsilon$ is sufficiently small.  For $k>2$ the error term above is 
readily seen to be $O(XQ)$ provided $X\le Q^{(k+2)/2-\delta}$ and $\epsilon$ is sufficiently small.  
Hence unconditionally for $X \leq Q^{(k+2)/k -\delta}$,
\begin{equation}
\label{Mellin_eval_s1s2.2}
\mathcal{MG}_k(Q;X) = \frac{1}{2\pi i} \int_{(-A)} Q^z \Res_{s_1,s_2=1} \widetilde{\mathcal{W}}(s_1,s_2;z)  X^{s_1+s_2-z}\, T_k(s_1,s_2;z)\,dz + O(XQ).
\end{equation}
On the Riemann hypothesis, using \eqref{ET3} we obtain the same result in the wider range $X\le Q^{2-\delta}$. 

We further simplify \eqref{Mellin_eval_s1s2.2}. Writing each residue as an integral over a small contour centered at $1$:
$$
\mathcal{O}:= \{s: |s-1|=\epsilon\}
$$
we may exchange the order of integration to rewrite the integral in \eqref{Mellin_eval_s1s2.2}:
$$
\frac{1}{(2\pi i)^3} \int_\mathcal{O} \int_\mathcal{O} \int_{(-A)}  \widetilde{\mathcal{W}}_3(s_1,s_2;z) Q^z X^{s_1+s_2-z}\, T_k(s_1,s_2;z)\, dz\, ds_1 ds_2.
$$
We will shift the integral in $z$ from the line $(-A)$ to the line $(\epsilon)$. Using the estimate for $\widetilde{\mathcal{W}}_3$ of Proposition \ref{W3_decay} to bound the horizontal components of this shift, and collecting the sole residue at $z=0$ we see that
\begin{align}
\label{MG_TwoMellins}
\notag \mathcal{MG}_k(Q;X) =& \frac{1}{(2\pi i)^3} \int_\mathcal{O} \int_\mathcal{O} \int_{(\epsilon)}  \widetilde{\mathcal{W}}_3(s_1,s_2;z) Q^z X^{s_1+s_2-z}\, T_k(s_1,s_2;z)\, dz ds_1 ds_2 \\
&+ \frac{1}{(2\pi i)^2} \int_\mathcal{O} \int_\mathcal{O} - \Res_{z=0}\,  \widetilde{\mathcal{W}}_3(s_1,s_2;z)  Q^z X^{s_1+s_2-z}\, T_k(s_1,s_2;z) \, ds_1 ds_2 \\
\notag &+ O(XQ).
\end{align}
Unconditionally we have this for $X \leq Q^{(k+2)/k-\delta}$, while on the Riemann hypothesis it is true in the larger region $X \leq Q^{2-\delta}$.

We conclude by noting a simplification of the residue in \eqref{MG_TwoMellins} that we use later. From \eqref{T_factor} and the definition of $\widetilde{\mathcal{W}}_3$, we have for $s_1, s_2 \in \mathcal{O}$,
\begin{equation}
\label{Res_eval1}
- \Res_{z=0}\,  \widetilde{\mathcal{W}}_3(s_1,s_2;z)  Q^z X^{s_1+s_2-z}\, T_k(s_1,s_2;z) = X^{s_1+s_2} \widetilde{\Psi}(s_1) \widetilde{\Psi}(s_2) \widetilde{\Phi}(0) \lim_{z\rightarrow 0} - z T_k(s_1,s_2,z).
\end{equation}

We turn now to the second step of our proof to see that much the same expression occurs in an evaluation of $\mathcal{B}_k$. After that, in the third step, we will evaluate the first contour integral in \eqref{MG_TwoMellins}.

\vspace{3mm}
\textbf{Step 2: Approximating $\mathcal{B}_k$ by a matching contour integral.}
Recall
$$
\mathcal{B}_k(Q;X) = \sum_{m,n} d_k(m)d_k(n) \Psi\Big(\frac{m}{X}\Big)\Psi\Big(\frac{n}{X}\Big) \sum_{(q,mn)=1} \frac{1}{\phi(q)} \Phi\Big(\frac{q}{Q}\Big).
$$
We estimate this sum with the help of the following proposition.
\begin{proposition}
\label{phi_sum}
Let
$$
S_H(z):= \sum_{(q,H)=1} \frac{1}{\phi(q) q^z}.
$$
Then
$$
\sum_{(q,H)=1} \frac{1}{\phi(q)} \Phi\Big(\frac{q}{Q}\Big) = \widetilde{\Phi}(0) \lim_{z\rightarrow 0^+}( z S_H(z)) + O_\epsilon(H^\epsilon/Q^{1-\epsilon}),
$$
uniformly in $H$, for any $\epsilon > 0$.
\end{proposition}

\begin{proof}
Define
$$
\mathfrak{s}_p(z):= 1+ \frac{1}{(p^{1+z}-1)(1-p^{-1})}.
$$
Note that
$$
S_H(z) = \prod_{p\nmid H} \mathfrak{s}_p(z) = \zeta(1+z) \Big(\prod_{p|H}\mathfrak{s}_p(z)^{-1}\Big) R(z),
$$
where
$$
R(z) = \prod_p \Big(1-\frac{1}{p^{1+z}}\Big) \mathfrak{s}_p(z)
$$
is bounded and analytic for $\Re z \geq -1 + \delta$, for any $\delta > 0$. We have
$$
\sum_{(q,H)=1} \frac{1}{\phi(q)} \Phi\Big(\frac{q}{Q}\Big) = \frac{1}{2\pi i} \int_{(\epsilon)} \widetilde{\Phi}(z) Q^z S_H(z)\, dz,
$$
for $\epsilon > 0$. The proposition then follows by pushing the line of integration to $(-1+\epsilon)$ and noting that for $\Re z = -1 + \epsilon$,
$$
\prod_{p|H} \mathfrak{s}_p(z)^{-1} \ll_\epsilon H^\epsilon,
$$
by using the fact that $\omega(H) \ll \log H/\log \log H.$
\end{proof}

\begin{remark}
It is easy to simplify $\lim_{z\rightarrow 0^+}( z S_H(z))$ to an Euler product depending on $H$. However for us this representation as a limit will be more convenient.
\end{remark}

Applying Proposition \ref{phi_sum} to $\mathcal{B}_k$,
$$
\mathcal{B}_k(Q;X) = \lim_{z\rightarrow 0 +} \sum_{m,n} d_k(m) d_k(n) \Psi\Big(\frac{m}{X}\Big) \Psi\Big(\frac{n}{X}\Big) \widetilde{\Phi}(0) z S_{mn}(z) + O_\epsilon(X^{2+\epsilon}/Q^{1-\epsilon}).
$$
Making use of a Mellin transform to rewrite the sum over $m$ and $n$, this is
\begin{equation}
\label{B_Mellin}
\lim_{z\rightarrow 0^+} \int_{(1+\epsilon)} \int_{(1+\epsilon)} \widetilde{\Psi}(s_1)\widetilde{\Psi}(s_2)\widetilde{\Phi}(0) X^{s_1+s_2} z F_k(s_1,s_2;z)\; ds_1 ds_2 + O_\epsilon (X^{2+\epsilon}/Q^{1-\epsilon}),
\end{equation}
where
$$
F_k(s_1,s_2;z):= \sum_{\substack{m,n,q \\ (q,mn)=1}} \frac{d_k(m) d_k(n)}{m^{s_1} n^{s_2}} \frac{1}{\phi(q) q^z}.
$$
By factoring $F_k$ as an Euler product, one may check that
\begin{equation}
\label{F_factor}
F_k(s_1,s_2;z)  = \zeta(s_1)^k \zeta(s_2)^k \zeta(1+z) G_k(s_1,s_2;z)
\end{equation}
for a function $G_k(s_1,s_2;z)$ that is analytic and bounded for $\Re s_1, \Re s_2 \geq 1/2+\delta$ and $\Re z \geq -1/2 + \delta$ for any $\delta > 0$.

Returning to \eqref{B_Mellin}, we shift the contour in $s_1$ and $s_2$ from $(1+\epsilon)$ to $(1/2+\epsilon)$ each, picking up as before a residue at $s_1,s_2=1$. We make use of the rapid decay of $\widetilde{\Psi}(s_1)\widetilde{\Psi}(s_2)$ to bound the new contour. Such residues as we pick up by shifting the contour occur at $s_1,s_2=1$. Leaving these residues as contour integrals localized around $1$, we obtain:
\begin{multline}
\label{B_Mellin2}
\mathcal{B}_k(Q;X) = \lim_{z\rightarrow 0} \int_{\mathcal{O}}\int_{\mathcal{O}} \widetilde{\Phi}(0) \widetilde{\Psi}(s_1) \widetilde{\Psi}(s_2) X^{s_1+s_2} z F_k(s_1,s_2;z) \, ds_1 ds_2 \\ + \underbrace{O_\epsilon(X^{3/2+\epsilon}) + O_\epsilon(X^{2+\epsilon}/Q^{1-\epsilon})}_{= O(XQ)\quad \textrm{for}\; X\leq Q^{2-\delta}},
\end{multline}
where $\mathcal{O}$ is as before the contour $\{s: |s-1| = \epsilon\}$. Using \eqref{F_factor} to establish dominated convergence, we may transfer the limit inside the integrals. 

We claim that the integral that occurs here is the same as the second integral that appeared in \eqref{MG_TwoMellins}. By using \eqref{Res_eval1}, to demonstrate this we need only show that 
\begin{equation}
\label{singularity_comparison}
\lim_{z\rightarrow 0} -z T_k(s_1,s_2;z) = \lim_{z\rightarrow 0} z F_k(s_1,s_2;z)
\end{equation}
for $s_1, s_2 \in \mathcal{O}$. One method for verifying this is a straightforward though tedious computation with the Euler product factorizations in \eqref{T_factor} and \eqref{F_factor}. Alternatively, note for $\Re s_1, \Re s_2 > 1$ and $\Re z < 0$,
\begin{align}
\label{T_to_G}
\notag T_k(s_1,s_2;z) &= \sum_{\substack{m,n,q \\ (q,mn)=1}} \frac{d_k(m) d_k(n)}{m^{s_1} n^{s_2}} \frac{q^z}{\phi(q)} \sum_{a | (g,q)} (g/a)^z \mu(a) \\
&= F_k(s_1,s_2;-z) + \sum_{\substack{m,n,q \\ (q,mn)=1}} \frac{d_k(m) d_k(n)}{m^{s_1} n^{s_2}} \frac{q^z}{\phi(q)} \sum_{a|(g,q)} \big((g/a)^z-1\big) \mu(a).
\end{align}
For fixed $s_1,s_2$, the sum on the last line is bounded as $z \rightarrow 0^-$, a claim that follows upon applying the crude bound
$$
\sum_{a|(g,q)} \big((g/a)^z-1\big) \mu(a) \ll_\epsilon g^\epsilon |z|.
$$
Multiplying both sides of \eqref{T_to_G} by $z$ and letting $z \rightarrow 0^-$ establishes \eqref{singularity_comparison} for $\Re s_1, \Re s_2 > 1$. (Really this establishes the claim for $z \rightarrow 0^-$, but it is immediate from the factorizations \eqref{T_factor} and \eqref{F_factor} that both limits do exist.) By analytic continuation we obtain \eqref{singularity_comparison} for all $s_1,s_2 \in \mathcal{O}$.

Hence, substituting the representation \eqref{B_Mellin2} for $\mathcal{B}_k$ into the representation \eqref{MG_TwoMellins} with which we concluded the last step, we obtain
\begin{multline}
\label{MGB_Mellin}
\mathcal{MG}_k(Q;X) - \mathcal{B}_k(Q;X) = \frac{1}{(2\pi i)^3} \int_{(\epsilon)} \int_\mathcal{O} \int_\mathcal{O} X^{s_1+s_2-z} Q^z \widetilde{\mathcal{W}}_3(s_1,s_2;z) T_k(s_1,s_2;z)\, ds_1 ds_2 dz \\ + O(XQ).
\end{multline}
We have shown this for $X \leq Q^{(k+2)/k-\delta}$ unconditionally, and for $X \leq Q^{2-\delta}$ on RH.

\vspace{3mm}
\textbf{Step 3: Evaluating the contour integral \eqref{MGB_Mellin} approximating $\mathcal{MG}_k - \mathcal{B}_k$}

In this last step of the proof we turn to evaluating the integral in \eqref{MGB_Mellin}. The basic idea is to push the contour in $z$ from the line $(\epsilon)$ to a curve extending just beyond the point $z=1$, thereby picking up a residue at $z=1$ and $s_1,s_2=1$, which when evaluated gives the formula in the lemma. In the proof that follows we will seek only a first-order asymptotic formula and ignore lower order terms.

Let $\mathcal{C}_k(Q;X)$ be the integral on the right hand side of \eqref{MGB_Mellin}. We may contract each of the contours $\mathcal{O}$ over which $s_1$ and $s_2$ are integrated to the contour $\mathcal{O}' := \{s: |s-1| = 1/\log X\}$ without encountering any singularities. Hence
$$
\mathcal{C}_k(Q;X) =  \int_{(\epsilon)} \underbrace{\int_{\mathcal{O}\,'}\int_{\mathcal{O}\,'} X^{s_1+s_2-z} Q^z \widetilde{\mathcal{W}}_3(s_1,s_2;z) T_k(s_1,s_2;z) ds_1 ds_2}_{:= c(z)}\,dz.
$$
We recall that $\zeta(s)$ has no zeros in the region $\{\sigma+it: \sigma \geq 1 - B/\log(|t|+2)\}$ for some absolute constant $B$. With this as a motivation we define the contour
$$
\gamma:= \{z = \sigma+it: \sigma = 1 - \tfrac{1}{\log Q} + \tfrac{B}{\log(|t|+2)}\},
$$
traversed as usual in the counter-clockwise direction. Our fuller reasons for choosing such a contour will become clear shortly. Also define $\mathcal{O}\,'':= \{z: |z-1| = 3/\log Q\}$. Let $E$ be the region bounded by the line $(\epsilon)$ and the contour $\gamma$. For sufficiently large $Q$, the curve $\mathcal{O}\,''$ is contained in the interior $E$ and one sees that the only possible singularities of the function $c(z)$ in this region $E$ are contained inside $\mathcal{O}\,''$, by using the analytic continuation of $\widetilde{\mathcal{W}}_3$ in Proposition \ref{Mellin_AC} and that of $T_k$ given by the identity \eqref{T_factor}. Hence, by moving the contour $(\epsilon)$ to the \emph{right} to $\gamma$,
\begin{multline}
\label{C_exp}
\mathcal{C}_k(Q;X) = \int_\gamma \int_{\mathcal{O}'} \int_{\mathcal{O}'} X^{s_1+s_2-z} Q^z \widetilde{\mathcal{W}}_3(s_1,s_2;z) T_k(s_1,s_2;z)\, ds_1 ds_2 dz \\ -\int_{\mathcal{O}\,''} \int_{\mathcal{O}\,'} \int_{\mathcal{O}\,'} X^{s_1+s_2-z} Q^z \widetilde{\mathcal{W}}_3(s_1,s_2;z) T_k(s_1,s_2;z)\, ds_1 ds_2\, dz,
\end{multline}
where the `horizontal parts at infinity' of the contour bounding the region $E$ may be shown to be negligible using the bound for $\widetilde{\mathcal{W}}_3$ in Proposition \ref{W3_decay}. 

The first integral in \eqref{C_exp} can be bounded by noting that for all $z\in \gamma$, we have $1+A \geq \Re z \geq 1- 1/\log Q$, so for $s_1 \in \mathcal{O}\,', s_2 \in \mathcal{O}\,', z\in \gamma$,
$$
X^{s_1+s_2-z} Q^z \ll_\epsilon XQ (\log Q)^\epsilon,
$$
for $Q/\log \log Q \leq X \leq Q^2$. Moreover, for such $s_1, s_2, z$,
$$
\widetilde{\mathcal{W}}_3(s_1,s_2;z) \ll_\ell \frac{1}{1+|\Im z|^\ell}
$$
and using \eqref{T_factor} and the fact that $|1-z| \gg 1/\log Q$ for $z \in \gamma$ we have also for such $s_1, s_2,$ and $z$,
$$
T_k(s_1,s_2;z) \ll |\zeta(1-z)|(\log Q)^{2k} \Big(1 + \frac{1}{|z-1|}\Big)^{k^2-2k+1}.
$$
Hence, integrating on the contours $\mathcal{O}', \mathcal{O}''$ and $\gamma$, the first integral in \eqref{C_exp} is no more in order than
$$
XQ (\log Q)^{k^2-2+\epsilon}.
$$

Turning to the second integral of \eqref{C_exp}, we have for 
$$
s_1 = 1 + \frac{s_1'}{\log Q},\quad s_2 = 1+ \frac{s_2'}{\log Q},\quad z = 1 + \frac{z'}{\log Q},
$$
with $|s_1'|=|s_2'| = 1$ and $|z'|=3$, by an expansion of each term into Laurent series,
\begin{multline*}
X^{s_1+s_2-z} Q^z \widetilde{\mathcal{W}}_3(s_1,s_2;z) T_k(s_1,s_2;z) \\= - XQ e^{(s_1'+s_2' - z')\tfrac{\log X}{\log Q}} e^{z'} \frac{(s_1'-z')^k (s_2'-z')^k}{(z')^2 (s_1')^k (s_2')^k (s_1'+s_2'-z')^{k^2}}\, V_k(1,1;1) (\log Q)^{k^2+2} \\+ O(XQ (\log Q)^{k^2+1}).
\end{multline*}
(Used in the above evaluation is  the fact that $\zeta(0) = -1/2$, together with Proposition \ref{Mellin_AC}.) Substituting this into \eqref{C_exp} and making the change of variables $s_1 \mapsto s_1', s_2 \mapsto s_2', z \mapsto z'$ in the integral that remains, we obtain
$$
\mathcal{C}_k(Q;X) = V_k(1,1;1) P_k\Big(\frac{\log X}{\log Q}\Big) XQ (\log Q)^{k^2-1} + O_\epsilon(XQ (\log Q)^{k^2-2+\epsilon}).
$$
Putting this back in \eqref{MGB_Mellin}, we see that Lemma \ref{off_diagonal_unconditional} will be proved if only we can show that $V_k(1,1;1) = \widetilde{a}_k$. But this is a straightforward (if slightly tedious) matter to check.
\end{proof}

\begin{remark} 
We have made use of the zero free region for $\zeta(s)$ out of convenience rather than necessity. Indeed, by a somewhat more computationally intensive proof, explicitly expanding the residues we have isolated above, a more complicated asymptotic formula for $\mathcal{MG}_k(Q;X) - \mathcal{B}_k(Q;X)$, with a power-saving error term, can be obtained.
\end{remark}

\section{Bounding the error term $\mathcal{EG}_k$}
\label{sec:error}

\subsection{A statement of bounds}
In this section we obtain the following bounds for the error term $\mathcal{EG}_k$.  
\begin{lemma}
\label{Ebound_unconditional}  For any $\epsilon >0$, 
$$
\mathcal{EG}_k(Q;X) \ll_\epsilon X^{1+\epsilon} (X/Q)^{k/2}.
$$
In particular, if $X \le Q^{(k+2)/k -\epsilon}$ then 
$$
\mathcal{EG}_k(Q;X) = o(XQ).
$$
\end{lemma}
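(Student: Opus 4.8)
\emph{Strategy.} The plan is to bound the contribution of each individual non-principal character in \eqref{EG_sum} while exploiting the cancellation in the associated twisted divisor sum, and then to sum over characters; the savings $(X/Q)^{k/2}$ will come from the fact that every character that occurs has conductor $\ll X/Q$. I would first record this last point: grouping the inner sum of \eqref{EG_sum} according to the primitive character $\psi\bmod f$ (with $f>1$) inducing $\chi$, and writing $m=gM$, $n=gN$ with $g=(m,n)$, we have on the support of $\Phi(|M-N|/(rQ))$ that $r\asymp|M-N|/Q=|m-n|/(gQ)\ll X/(gQ)$ (since $m,n\asymp X$ on the support of $\Psi(\cdot/X)$), while $a\mid g$; hence the modulus $ar$, and so also the conductor $f$, of every occurring character is $\ll X/Q$. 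Thus $|\mathcal{EG}_k|\ll\sum_{1<f\ll X/Q}\,\sum_{\psi\bmod f}^{*}\,\sum_{a,r:\,f\mid ar}\phi(ar)^{-1}|S(f,\psi,a,r)|$, where $S(f,\psi,a,r)$ denotes the inner double sum over $m$ and $n$.

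\emph{Extracting $L$-functions.} To estimate $S(f,\psi,a,r)$ I would apply the multivariable Mellin inversion of Section \ref{sec:analytic} to $\mathcal{W}$, which separates the $m$- and $n$-variables and turns $S$ into a triple contour integral in $(s_1,s_2,z)$; after M\"obius inversion to strip the coprimality conditions (the full-gcd condition defining $g$, the condition $(MN,ar)=1$, and $a\mid g$), the integrand is, for each $g$, a product of two twisted divisor sums $\sum_M d_k(gM)\psi(M)M^{-s_1}$ and $\sum_N d_k(gN)\overline{\psi(N)}N^{-s_2}$, times bounded Euler corrections. Splitting each summation variable into its part supported on primes dividing $g$ and its part coprime to $g$ gives $\sum_M d_k(gM)\psi(M)M^{-s}=L(s,\psi)^k A_{g,\psi}(s)$, with $A_{g,\psi}$ holomorphic for $\Re s>0$ and $\ll_\epsilon g^\epsilon$ on $\Re s\ge\tfrac12$; since $\psi$ is non-principal, $L(s,\psi)$ is entire, so the $s_1$- and $s_2$-contours can be moved to lines with real part just above $\tfrac12$ without meeting a pole. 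There the convexity bound $|L(\tfrac12+\epsilon+it,\psi)|\ll(f(1+|t|))^{1/4+\epsilon}$ makes the two $L$-factors contribute a power $f^{k/2}$, together with factors $(1+|t_1|)^{k/4}(1+|t_2|)^{k/4}$ that are absorbed by the rapid decay of $\widetilde{\mathcal W}_3$ from Proposition \ref{W3_decay}. Choosing the $z$-contour $\Re z=\beta\in(-A,1)$ suitably, combining the weight $X^{s_1+s_2-z}Q^z$ with the support bound $gr\ll X/Q$, and summing over $f\ll X/Q$, over the $\le\phi(f)$ primitive characters mod $f$, over $a,r$ with $f\mid ar$, and over $g$, should give $\mathcal{EG}_k\ll_\epsilon X^{1+\epsilon}(X/Q)^{k/2}$. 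The ``in particular'' clause is then immediate, since $X^{1+\epsilon}(X/Q)^{k/2}=o(XQ)$ exactly when $X=o(Q^{(k+2)/k-\epsilon'})$.

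\emph{Main obstacle.} The real work is this last bookkeeping step: organising the multiple sum over the modulus $ar$, the conductor $f\mid ar$, the character $\psi$, the gcd $g$, and the M\"obius variables so that (i) the conductor is kept $\ll X/Q$ throughout, (ii) the total loses only $X^\epsilon$ over the trivial count, and (iii) no pole of an $L$-factor or of the $\zeta$-factors appearing in the Euler corrections is crossed under the contour shifts. In particular $\Re z=\beta$ must be chosen with care, since it has to be large enough for the sums over the moduli to converge and at the same time small enough that $(X/(grQ))^{-\beta}$ does not exceed $(X/Q)^{k/2}$. In place of the pointwise convexity bound one may feed the large sieve into the $\psi$-average of $|L(\tfrac12+\epsilon+it,\psi)|^{2k}$, which yields the same unconditional exponent; using instead the conditional Lindel\"of bound is what would enlarge the admissible range of $X$.
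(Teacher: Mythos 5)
The key identification that all moduli $ar$ (and hence all conductors) occurring in $\mathcal{EG}_k$ are $\ll X/Q$, and the plan to extract $L$-functions by Mellin inversion and shift to the critical line, both match the paper. However, your primary estimate is off by a full factor of $X/Q$, and this is a genuine gap. With the pointwise convexity bound you get at most $f^{k/2}$ (up to $t$-factors) from each primitive character $\psi\bmod f$, and you must then sum this over all $\ll \phi(f)$ primitive characters for each $f\ll X/Q$; the weight per character coming from the sum over $(a,r)$ with $f\mid ar$ and the $1/\phi(ar)$ factor is only $\ll 1/f$ (up to logs). So the total over the family is
$$
\sum_{f\ll X/Q}\phi(f)\,f^{k/2}\cdot\frac{1}{f}\ \asymp\ \sum_{f\ll X/Q}f^{k/2}\ \asymp\ (X/Q)^{k/2+1},
$$
which produces $\mathcal{EG}_k\ll_\epsilon X^{1+\epsilon}(X/Q)^{k/2+1}$, not $(X/Q)^{k/2}$. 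This weaker bound would only establish the lemma for $X\le Q^{(k+4)/(k+2)-\delta}$, strictly inside the claimed range. Your remark that feeding the large sieve into the $\psi$-average ``yields the same unconditional exponent'' as convexity is therefore not correct; the large sieve is not an alternative route but the essential ingredient. Indeed, the paper's Proposition \ref{unconditional_moment} (a Huxley-type hybrid large sieve bound) gives $\sum_{q\le R}\sum_{\chi\neq\chi_0}\int_0^S|L(\tfrac12+it,\chi)|^{2k}\,dt\ll (R^2S+(RS)^{k/2})(RS)^\epsilon$, which for the family of $\asymp R^2$ characters of modulus $\le R\asymp X/Q$ is on average $\ll\max(1,R^{k/2-2})$ per character --- a saving of a factor $\asymp R^2$ over what convexity can furnish when $k>4$, and a Lindel\"of-quality saving when $k\le 4$. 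It is precisely this family-wide cancellation that recovers the missing $X/Q$.

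Two smaller remarks. First, the paper works with the two-variable transform $\widetilde{\mathcal{W}}_2(s_1,s_2;u)$, keeping the third argument $u=X/(a\lambda rQ)$ explicit and using Proposition \ref{W2_bound} to trade decay in $\max(|t_1|,|t_2|)$ against powers of $u$; your three-variable $\widetilde{\mathcal{W}}_3$ route with a $z$-contour is workable but makes it harder to see how to balance the $t$-integrals against the dyadic ranges of the moduli, and you would still have to reproduce the large-sieve step in that setting. Second, in describing the convexity estimate you let the $(1+|t_j|)^{k/4}$ factors be ``absorbed by rapid decay''; note that Proposition \ref{W3_decay} gives rapid decay only in $|t_1+t_2|$ and $|\Im z|$, while the decay in $|t_1-t_2|$ is merely polynomial of order $1-\Re z$, so for $\Re z$ near $1$ this absorption is not automatic --- another point in favour of the paper's $\widetilde{\mathcal{W}}_2$ bookkeeping.
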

\begin{lemma}
\label{Ebound_conditional}
Assume GRH. For any $\epsilon > 0$.
$$
\mathcal{EG}_k(Q;X) \ll_\epsilon X^{2+\epsilon}/Q,
$$
In particular, if $X \le Q^{2-\epsilon}$ then 
$$
\mathcal{EG}_k(Q;X) = o(XQ).
$$ 
\end{lemma}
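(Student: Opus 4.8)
The plan is to bound $\mathcal{EG}_k$ by the large sieve (unconditionally, Lemma~\ref{Ebound_unconditional}) and by a Lindel\"of-type pointwise bound for Dirichlet $L$-functions (under GRH, Lemma~\ref{Ebound_conditional}). I would first sort the characters in \eqref{EG_sum} by conductor: write each $\chi \bmod ar$ with $\chi \neq \chi_0$ as induced by the primitive character $\chi^{*} \bmod d$, where $d \mid ar$ and $d > 1$, so that $\chi = \chi^{*}$ on integers coprime to $ar$. Recalling $g = (m,n)$, $M = m/g$, $N = n/g$, and that $\Psi(m/X)\Psi(n/X)\Phi(|M-N|/(rQ)) = \mathcal{W}(m/X, n/X; X/(grQ))$, this rewrites $\mathcal{EG}_k$ with an inner sum $\sum_{d \mid ar,\, d>1} \sum^{*}_{\chi \bmod d} \chi(M)\overline{\chi(N)}$. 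Next I would decouple the variables $m$ and $n$ (equivalently $M$ and $N$): Mellin-inverting $\mathcal{W}$ in its first two arguments via $\widetilde{\mathcal{W}}_2(s_1,s_2;u)$ with $u = X/(grQ)$, Proposition~\ref{W2_bound} shows that $\widetilde{\mathcal{W}}_2$ decays rapidly once $\max(|t_1|,|t_2|) \gg X/(grQ)$ and vanishes unless $grQ \ll X$ --- in particular $ar \leq X/Q$. After removing the condition $(M,N)=1$ by M\"obius inversion at a cost of $X^{\epsilon}$, one is left with a contour integral of
\[
X^{s_1+s_2} \sum_{\substack{a,r \\ a \mid g}} \frac{\mu(a)}{\phi(ar)\, g^{s_1+s_2}}\, \widetilde{\mathcal{W}}_2\!\left(s_1,s_2;\tfrac{X}{grQ}\right) \sum_{\substack{d \mid ar \\ d > 1}} \sum^{*}_{\chi \bmod d} \mathcal{L}_g(s_1,\chi)\,\overline{\mathcal{L}_g(s_2,\overline{\chi})},
\]
where $\mathcal{L}_g(s,\chi) = \sum_{(M,ar)=1} d_k(gM)\chi(M)M^{-s}$ factors as $L(s,\chi)^k$ times a finite Euler product over primes dividing $gar$ which is $\ll (gar)^{\epsilon}$ on $\Re s \geq \tfrac12 + \epsilon$.

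For Lemma~\ref{Ebound_unconditional} I would take $\Re s_1 = \Re s_2 = \tfrac12 + \epsilon$ and bound the inner character moment by Cauchy--Schwarz together with the large sieve applied to the approximate functional equation for $L(s,\chi)^k$: the latter expresses $L(\tfrac12+\epsilon+it,\chi)^k$ as a Dirichlet polynomial in $\chi$ of length $\asymp (d(1+|t|))^{k/2}$ with coefficients of size $d_k(n)$, and since $\sum_{n \leq Y} d_k(n)^2/n \ll (\log Y)^{k^2}$, summing over conductors $d \leq D \asymp X/Q$ gives
\[
\sum_{d \leq D} \sum^{*}_{\chi \bmod d} |L(\tfrac12+\epsilon+it,\chi)|^{2k} \ll \bigl( D^2 + (D(1+|t|))^{k/2} \bigr)(\log X)^{O(1)},
\]
which is the source of the factor $(X/Q)^{k/2}$. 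Feeding this back, together with the $t_1,t_2$-decay of $\widetilde{\mathcal{W}}_2$ and the absolutely convergent sums over $g$, $a$, $r$, produces $\mathcal{EG}_k(Q;X) \ll X^{1+\epsilon}(X/Q)^{k/2}$, which is $o(XQ)$ when $X \leq Q^{(k+2)/k - \epsilon}$. For Lemma~\ref{Ebound_conditional} I would instead invoke GRH in the form $L(\tfrac12+it,\chi) \ll (d(1+|t|))^{\epsilon}$ for primitive $\chi \bmod d$, so that $\mathcal{L}_g(\tfrac12+it,\chi) \ll (g\,d\,(1+|t|))^{\epsilon}$ with no dependence on $k$; bounding the at most $\phi(ar) \ll ar \leq X/Q$ primitive characters of conductor dividing $ar$ trivially and carrying out the remaining sums yields $\mathcal{EG}_k(Q;X) \ll X^{2+\epsilon}/Q = o(XQ)$ for $X \leq Q^{2-\epsilon}$.

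The main obstacle is the bookkeeping rather than any single hard estimate: tracking the coprimality constraints $a \mid g$, $(M,ar)=1$, $(M,N)=1$ and the attendant Euler-product corrections through the Mellin steps, and --- in the unconditional case --- arranging the large-sieve/functional-equation input so that the power of $X/Q$ comes out to exactly $k/2$, which forces one to balance the conductor range $D \asymp X/Q$, the lengths of the $d_k$-Dirichlet polynomials, and the $t$-decay supplied by $\widetilde{\mathcal{W}}_2$. One should also note that unconditionally the argument must pass through the large sieve (via the approximate functional equation), since a direct $2k$-th moment bound for Dirichlet $L$-functions is not available for $k \geq 3$, whereas on GRH the resulting bound is genuinely $k$-independent.
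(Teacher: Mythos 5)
Your proposal is correct and takes essentially the same route as the paper: you reuse the same M\"obius expansion of the coprimality, Mellin inversion of $\mathcal{W}$, contour shift to $\Re s = \tfrac12$, the estimate $|F_A(s,\chi)|\ll_\epsilon A^\epsilon|L(s,\chi)|^k$, and the dyadic decomposition balanced against the decay of $\widetilde{\mathcal{W}}_2$ from the unconditional case, with the only real difference being that you bound the $L$-function contribution pointwise via the Lindel\"of consequence of GRH rather than invoking the Chandee--Li conditional moment bound of Proposition~\ref{conditional_moments}. The paper's remark after that proposition explicitly notes that the crude bound $(QS)^{1+\epsilon}$ suffices --- which is exactly what your pointwise Lindel\"of bound produces after trivially summing over the $\ll h$ characters mod $h$ and integrating --- so your variation is already acknowledged there and changes nothing structurally.
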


\begin{remark}
\label{obvious_remark}
Owing to the support of $\Phi$ and $\Psi$, there plainly exists a constant $\eta$ depending on these functions so that if $X \leq \eta Q$, we have $\mathcal{EG}_k(Q;X) = 0.$ So in our proofs below we will only be concerned with the case $X \gg Q$.
\end{remark}

\subsection{Moments of $L$-functions}
In order to prove these results we require first some estimates for moments of $L$-functions.
\begin{proposition}
\label{unconditional_moment}
For $R,S \geq 1$, 
$$
\sum_{q\leq R} \sum_{\substack{\chi \Mod q \\ \chi \neq \chi_0}} \int_0^S |L(1/2+it,\chi)|^{2k}\, dt \ll_{k,\epsilon} (R^2 S + (RS)^{k/2}) (RS)^\epsilon,
$$
for any $\epsilon > 0$.
\end{proposition}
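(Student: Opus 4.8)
The plan is to follow the classical route for bounding $2k$-th power moments of Dirichlet $L$-functions: approximate $L(\tfrac12+it,\chi)^k$ by a Dirichlet polynomial of length $\ll (d(|t|+2))^{k/2+\epsilon}$ via the approximate functional equation, and then invoke the hybrid large sieve. First I would reduce to primitive characters. Writing each non-principal $\chi\bmod q$ as induced by a primitive $\chi^{*}\bmod d$ with $d\mid q$, $d>1$, and using that on the line $\Re s=\tfrac12$ one has $|L(s,\chi)|\le|L(s,\chi^{*})|\prod_{p\mid q}(1+p^{-1/2})$ together with the elementary bound $\prod_{p\mid q}(1+p^{-1/2})^{2k}\ll_\epsilon q^\epsilon$, we get
\[
\sum_{q\le R}\sum_{\substack{\chi\bmod q\\ \chi\ne\chi_0}}\int_0^S|L(\tfrac12+it,\chi)|^{2k}\,dt\ \ll_\epsilon\ R^\epsilon\sum_{1<d\le R}\frac{R}{d}\starsum_{\chi\bmod d}\int_0^S|L(\tfrac12+it,\chi)|^{2k}\,dt,
\]
the factor $R/d$ counting the multiples $q=dm\le R$ through which a fixed $\chi^{*}$ is induced. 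A dyadic splitting of the $d$-range then reduces the Proposition to proving
\[
\mathcal M(D,S):=\sum_{d\le D}\frac{d}{\phi(d)}\starsum_{\chi\bmod d}\int_0^S|L(\tfrac12+it,\chi)|^{2k}\,dt\ \ll_{k,\epsilon}\ \big(D^2S+(DS)^{k/2}\big)(DS)^\epsilon,
\]
because summing $(R/D)\,\mathcal M(D,S)$ over the $O(\log R)$ dyadic $D\le R$ produces $\ll(R^2S+(RS)^{k/2})(RS)^\epsilon$, the geometric sums being dominated by $D\asymp R$ (this is where $k\ge 2$ enters).

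To establish the bound for $\mathcal M(D,S)$: for $\chi$ primitive mod $d\ge 2$, the approximate functional equation for $L(s,\chi)^k$ (which for $\Re s>1$ equals $\sum_n d_k(n)\chi(n)n^{-s}$) gives, at $s=\tfrac12+it$ with $0\le t\le S$,
\[
|L(\tfrac12+it,\chi)|^{2k}\ \ll\ \Big|\sum_n\frac{d_k(n)\chi(n)}{n^{1/2+it}}V_s\big(\tfrac nY\big)\Big|^2+\Big|\sum_n\frac{d_k(n)\overline{\chi}(n)}{n^{1/2-it}}\widetilde V_s\big(\tfrac nY\big)\Big|^2,
\]
where $Y\asymp d^{k/2}$, the root number and archimedean factor are unimodular on the critical line, and $V_s,\widetilde V_s\ll 1$ are smooth and decay rapidly once their argument exceeds $(d(|t|+2))^{k/2+\epsilon}\ll(DS)^{k/2+\epsilon}$. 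Splitting the $n$-sums dyadically into $\ll\log(DS)$ ranges $n\sim N$ with $N\ll(DS)^{k/2+\epsilon}$, applying Cauchy--Schwarz over these $O(\log(DS))$ ranges, and removing the $t$-dependence of the weight by Mellin inversion in $n$ (which shifts the $t$-variable by $O((DS)^\epsilon)$ and enlarges the integration range to $[-O(S),O(S)]$), one is reduced to bounding, for each such $N$,
\[
\frac1N\sum_{d\le D}\frac{d}{\phi(d)}\starsum_{\chi\bmod d}\int_{-O(S)}^{O(S)}\Big|\sum_{n\sim N}d_k(n)\chi(n)\,n^{-it}\Big|^2\,dt.
\]
The hybrid large sieve inequality bounds the triple sum here by $(D^2S+N)\sum_{n\sim N}d_k(n)^2\ll(D^2S+N)N^{1+\epsilon}$, so each dyadic block contributes $\ll(D^2S+N)(DS)^\epsilon$; summing over the $\ll\log(DS)$ blocks with $N\ll(DS)^{k/2+\epsilon}$ gives $\mathcal M(D,S)\ll(D^2S+(DS)^{k/2})(DS)^\epsilon$, as required. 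The two terms have the expected meaning: $D^2S$ is the count of characters times the length of the $t$-range, and $(DS)^{k/2}$ is the length of the Dirichlet polynomial.

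The primitive-character reduction and the bookkeeping of the geometric sums are routine. The substantive ingredient is the coupling of the $k$-th power approximate functional equation with the hybrid large sieve, and I expect the main technical nuisance to be handling the $t$-dependence of the cutoffs $V_s,\widetilde V_s$ cleanly, so that the large sieve --- whose coefficients must be independent of $t$ --- can be applied while keeping the polynomial length uniformly $\ll(DS)^{k/2+\epsilon}$ and the $t$-range $O(S)$. This manipulation is standard, but it is the point at which one must take care.
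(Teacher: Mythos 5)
Your proposal is correct and takes essentially the same route as the paper's (admittedly terse) sketch: reduce between non-principal and primitive characters via the Euler-factor relation $L(s,\chi)=L(s,\chi^{*})\prod_{p\mid q}(1-\chi^{*}(p)p^{-s})$, approximate $L(\tfrac12+it,\chi^{*})^{k}$ by Dirichlet polynomials of length $\ll (dS)^{k/2+\epsilon}$ via the approximate functional equation, and apply Gallagher's hybrid large sieve. The only difference is cosmetic ordering (you reduce to primitive characters before applying the large sieve, the paper does the primitive case first and then extends, citing Proposition 2.9 of Chandee--Li); the observation that the $R/d$ geometric sum requires $k\ge 2$ to be dominated by $d\asymp R$ is a correct and worthwhile detail that the paper leaves implicit.
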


\begin{proof}
{This is an application of the large sieve.  Without the average over $t$, the bound would be a special case of a result of Huxley \cite[Thm. 3]{Hu}, (see also \cite[Thm 7.34]{IwKo}). 
 Since the proof of Proposition \ref{unconditional_moment} is much the same as Huxley's we only give a sketch of the proof. By using the approximate functional equation, for $\chi$ primitive we may approximate $L(1/2+it,\chi)^k$ by Dirichlet polynomials of length roughly $(RS)^{k/2}$. By using the hybrid large sieve \cite{Ga} of Gallagher (or see \cite[Thm. 7.17]{IwKo}), we obtain
$$
\sum_{q\leq R} \sum_{\substack{\chi \Mod q \\ \chi \;\mathrm{prim.}}} \int_0^S |L(1/2+it,\chi)|^{2k}\, dt \ll_{k,\epsilon} (R^2 S + (RS)^{k/2}) (RS)^\epsilon,
$$
It is then a simple matter to extend the sum to all non-zero characters.  See, for example, the proof of Proposition 2.9 of \cite{ChLi}.
}
\end{proof}

\begin{proposition}
\label{conditional_moments}
Assume GRH. For $S \geq 2$ and all positive integers $q$,
$$
\sum_{\substack{\chi \Mod q \\ \chi \neq \chi_0}} \int_0^S |L(1/2+it,\chi)|^{2k} \,dt \ll_{k,\epsilon} \phi(q) S (\log qS)^{k^2+\epsilon}.
$$
\end{proposition}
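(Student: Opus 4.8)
The plan is to follow Soundararajan's method for conditional upper bounds on moments of $\zeta(s)$, in the form adapted to Dirichlet $L$-functions by Chandee and Li \cite{ChLi}; the two new features here --- an average over $t\in[0,S]$, and the presence of imprimitive characters --- cause no essential difficulty, so I would only sketch the argument. The first step is a conditional pointwise bound. If $\chi\Mod q$ is induced by the primitive character $\chi^*$, then $L(s,\chi)=L(s,\chi^*)\prod_{p|q}(1-\chi^*(p)p^{-s})$; the finite Euler product has no zeros with $\Re s\ge 1/2$ and contributes only $O\big(\sum_{p|q}1/p\big)=O(\log\log\log q)$ to $\log|L(1/2+it,\chi)|$, so GRH for $L(s,\chi^*)$ yields the usable form of GRH for $L(s,\chi)$, and the number of zeros of $L(s,\chi)$ in a unit box at height $t$ is $\ll\log(q(|t|+2))$. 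Soundararajan's lemma then gives, for any parameter $x\ge 2$ and $|t|\le S$,
\begin{equation*}
\log|L(\tfrac12+it,\chi)|\le\Re\sum_{2\le n\le x}\frac{\Lambda(n)\chi(n)}{n^{1/2+it}\log n}\,\frac{\log(x/n)}{\log x}+\frac{(1+o(1))\log(q(|t|+2))}{2\log x}+O(1);
\end{equation*}
in particular $\log|L(\tfrac12+it,\chi)|\ll\log(qS)/\log\log(qS)$ pointwise.

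Next I would estimate the distribution function $\mathcal{M}(V):=\sum_{\chi\Mod q}\operatorname{meas}\{t\in[0,S]:\log|L(\tfrac12+it,\chi)|\ge V\}$. Trivially $\mathcal{M}(V)\le\phi(q)S$, and by the pointwise bound $\mathcal{M}(V)=0$ once $V\gg\log(qS)/\log\log(qS)$. For $V$ in between, I would choose an auxiliary parameter $x=x(V)$ and an even moment order $2\ell=2\ell(V)$ as in \cite{ChLi}: the pointwise bound reduces matters to bounding the measure of the set where $\Re$ of the Dirichlet polynomial above exceeds $V-(1+o(1))\log(qS)/(2\log x)$, and this is controlled by its $2\ell$-th power, integrated in $t$ and summed over $\chi\Mod q$. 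On expanding that power, orthogonality gives $\sum_{\chi\Mod q}\chi(n)\overline{\chi(m)}=\phi(q)$ if $n\equiv m\Mod q$ with $(nm,q)=1$ and $0$ otherwise, while $\int_0^S (m/n)^{it}\,dt=S$ if $n=m$ and is $O(\min(S,|\log(m/n)|^{-1}))$ otherwise (the prime-power terms, having bounded variance, affecting none of this). Provided $x^{2\ell}$ is a small enough power of $qS$, the off-diagonal contributions --- from $n\equiv m\Mod q$ with $n\ne m$, and from the oscillatory $t$-integral --- are negligible, this being the estimate worked out carefully in \cite{ChLi}, and one is left with a diagonal main term $\ll\phi(q)S\,(2\ell-1)!!\,\sigma(x)^{2\ell}$ with $\sigma(x)^2=(1+o(1))\log\log x$. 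Optimizing in $\ell$ produces a Gaussian-type bound $\mathcal{M}(V)\ll\phi(q)S\exp\!\big(-V^2/((1+o(1))\log\log qS)\big)$ throughout the range of $V$ that matters, together with the much cruder but amply sufficient bound $\mathcal{M}(V)\ll\phi(q)S\exp(-cV\log V)$ for larger $V$.

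Finally I would assemble the moment through the layer-cake identity
\begin{equation*}
\sum_{\chi\Mod q}\int_0^S|L(\tfrac12+it,\chi)|^{2k}\,dt=2k\int_{-\infty}^{\infty}e^{2kV}\mathcal{M}(V)\,dV.
\end{equation*}
The part $V\le 0$ contributes $\ll\phi(q)S$; inserting the Gaussian bound, the main range contributes $\ll\phi(q)S\int_0^\infty e^{2kV-V^2/((1+o(1))\log\log qS)}\,dV$, an integral dominated by $V=(1+o(1))k\log\log(qS)$ and hence of size $\ll\phi(q)S(\log qS)^{(1+o(1))k^2}$; the crude bound disposes of the large-$V$ tail with room to spare. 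Since $(1+o(1))k^2\log\log(qS)\le(k^2+\epsilon)\log\log(qS)$ for $qS$ large, this gives $\sum_{\chi\Mod q}\int_0^S|L(\tfrac12+it,\chi)|^{2k}\,dt\ll_{k,\epsilon}\phi(q)S(\log qS)^{k^2+\epsilon}$.

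The hard part will be the off-diagonal and parameter-tuning analysis inside the distribution-function estimate: forcing the Gaussian bound to hold out to $V\asymp k\log\log(qS)$ requires the Dirichlet polynomial and moment order to be large enough that $x^{2\ell}$ exceeds $q$, so one must bound the resulting off-diagonal terms rather than discard them. This is precisely the technical heart of the arguments of Soundararajan and of Chandee and Li, and I would import it with only cosmetic changes; by contrast, the passage from primitive to imprimitive characters and the inclusion of the $t$-average call for only routine bookkeeping.
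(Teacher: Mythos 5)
The paper's own proof of this proposition is a single line citing Chandee and Li's Proposition 2.9; your sketch essentially reconstructs the argument behind that citation, so you are following the same route. One step in the reduction to primitive characters needs repair, though. You assert that the finite Euler product $\prod_{p\mid q,\,p\nmid q^*}(1-\chi^*(p)p^{-s})$ contributes $O\big(\sum_{p\mid q}1/p\big)=O(\log\log\log q)$ to $\log|L(1/2+it,\chi)|$. Each factor in fact contributes $\log|1-\chi^*(p)p^{-1/2-it}|=O(p^{-1/2})$, not $O(p^{-1})$, so by itself the Euler product can shift $\log|L|$ by as much as $O\big(\sum_{p\mid q}p^{-1/2}\big)$, which for highly composite $q$ is of order $\sqrt{\log q}/\log\log q$ --- much larger than the $\log\log(qS)$ scale of the relevant values of $V$, and so not absorbable into the $O(1)$ of Soundararajan's inequality as you suggest. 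What actually saves the day, and what the Chandee--Li argument exploits, is that the first-order piece $-\Re\sum_{p\mid q,\,p\le x}\chi^*(p)p^{-1/2-it}$ of the Euler product cancels against the discrepancy between the $\chi^*$- and $\chi$-twisted Dirichlet polynomials over $n\le x$ (the latter vanishes on $p\mid q$); once $x$ is a small power of $q$, the leftover is genuinely $O\big(\sum_{p\mid q}1/p\big)+o(1)=O(\log\log\log q)$, matching your final estimate but by a cancellation rather than a crude absolute bound. With that repair your sketch is sound. It may also be worth recording, as the remark after the proposition does, that the present paper only uses the cruder bound $(qS)^{1+\epsilon}$, which is immediate from the generalized Lindel\"of hypothesis.
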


\begin{proof}
This is \cite[Prop. 2.9]{ChLi}.
\end{proof}

\begin{remark}
The result there is proven making use of the conditional moment bounds from \cite{So}.  Arguing as in the work of Harper \cite{Ha}, the $\epsilon$ in the exponent may be removed.  
In fact, we will only need the crude bound of $(QS)^{1+\epsilon}$, which follows from the generalized Lindel\"of hypothesis.
\end{remark}

\subsection{Proofs of the bounds}

\begin{proof}[Proof of Lemma \ref{Ebound_unconditional}]
By using \eqref{EG_sum} we see that
\begin{multline}
\label{E_exp1}
\mathcal{EG}_k(Q;X) = \sum_{(M,N)=1} \sum_{\substack{a,r,g \\ a|g}} \frac{\mu(a)}{\phi(ar)} \sum_{\substack{\chi \Mod{ar}\\\chi \neq \chi_0}} \chi(M)\overline{\chi(N)} d_k(gM) d_k(gN) \mathcal{W}\Big(\frac{gM}{X}, \frac{gN}{X}; \frac{X}{grQ}\Big),
\end{multline}
where from the support of the function $\mathcal{W}$, we have that $gM$, and $gN$ are $\ll X$, and that $r \ll |M-N| /Q \ll X/g$.   
By writing $g = a\lambda$ and using the fact that for any function $\alpha(M,N),$
$$
\sum_{(M,N)=1} \chi(M)\overline{\chi(N)} \alpha(M,N) = \sum_{(j,ar)=1} \sum_{\mm, \nn \geq 1} \mu(j) \chi(\mm) \overline{\chi(\nn)} \alpha(j\mm, j\nn),
$$
where $\chi$ is a character modulo $ar$, we can further expand \eqref{E_exp1} to
$$
\sum_{\mm,\nn\geq 1} \mathop{\sum_{a,\lambda, j \ll X} \sum_{r \ll X/Q}}_{(j,ar)=1} \frac{\mu(a)}{\phi(ar)} \chi(\mm) \overline{\chi(\nn)} \mu(j) d_k(a\lambda j \mm) d_k(a\lambda j \nn) \mathcal{W}\Big(\frac{a\lambda j \mm}{X}, \frac{a\lambda j \nn}{X}; \frac{X}{a\lambda rQ}\Big).
$$
By taking a Mellin transform in the first two variables of $\mathcal{W}$, it follows that for $\beta > 1$,
\begin{multline}
\label{E_exp2}
\mathcal{EG}_k(Q;X) = \mathop{\sum_{a\lambda j \ll X} \sum_{r \ll X/(Qa\lambda)}}_{(j,ar)=1} \frac{\mu(a)\mu(j)}{\phi(ar)} \sum_{\substack{\chi \Mod{ar} \\ \chi \neq \chi_0}} \frac{1}{(2\pi i)^2} \int_{(\beta)} \int_{(\beta)} \frac{X^{s_1+s_2}}{(a\lambda j)^{s_1+s_2}}\\ \times  F_{a\lambda j} (s_1, \chi) F_{a\lambda j} (s_2, \overline{\chi}) \widetilde{\mathcal{W}_2}\Big(s_1,s_2; \frac{X}{a\lambda r Q}\Big)\, ds_1 ds_2,
\end{multline}
where  
$$
F_A(s,\chi):= \sum_{m \geq 1} d_k(Am) \frac{\chi(m)}{m^s}
 = \frac{\prod_{p^\ell || A} \sum_{i\geq 0} d_k(p^{\ell+i})\chi(p^i)p^{-is}} {\prod_{p | A} \sum_{i\geq 0} d_k(p^{i})\chi(p^i)p^{-is}} L(s,\chi)^k.
$$
Since the characters $\chi$ in \eqref{E_exp2} are not principal,  we may shift the contours from the lines $(\beta)$ to the lines $(1/2)$ without encountering any singularities. (Proposition \ref{W2_bound}, which estimates the function $\widetilde{W}_2$, allows us bound the horizontal part of the contour shift.)  Since $d_k(p^{\ell+i}) \leq d_k(p^\ell) d_k(p^i)$, on the line $\Re s = 1/2$, we have 
$$
\Big| \sum_{i\geq 0} d_k(p^{\ell+i})\chi(p^i)p^{-is} \Big| \leq d_k(p^\ell) \sum_{i\geq 0} \frac{d_k(p^i)}{ p^{i/2}} = d_k(p^\ell) \frac{1}{(1-1/\sqrt{p})^k}.
$$
Further 
$$
\Big| \sum_{i \geq 0} d_k(p^{i})\chi(p^i)p^{-is} \Big|  = \Big| 1-\frac{\chi(p)}{p^{s}} \Big|^{-k} \geq \frac{1}{(1+1/\sqrt{p})^k}, 
$$
and so for $\Re s = 1/2$,
$$
|F_A(s,\chi)| \leq d_k(A) \prod_{p|A} \Big(\frac{1+1/\sqrt{p}}{1-1/\sqrt{p}}\Big)^k |L(s,\chi)|^k  \ll_\epsilon A^\epsilon |L(s,\chi)|^k,
$$
for any $\epsilon > 0$.  
  Hence, returning to \eqref{E_exp2} and using also $\phi(ar) \gg (ar)^{1-\epsilon}$, 
\begin{multline*}
\mathcal{EG}_k(Q;X) \ll_\epsilon \mathop{\sum_{a\lambda  j \ll X} \sum_{r \ll X/(Qa\lambda)}}_{(j,ar)=1} \frac{X^{1+\epsilon} }{a^2 \lambda j r} \\
\times \sum_{\substack{\chi \Mod{ar} \\ \chi \neq \chi_0}} \int_{-\infty}^\infty \int_{-\infty}^\infty |L(s_1,\chi)|^k |L(s_2,\chi)|^k \Big|\widetilde{\mathcal{W}}_2\Big(s_1,s_2; \frac{X}{a\lambda r Q}\Big)\Big|\,dt_1 dt_2,
\end{multline*}
where $s_1 = 1/2+it_1$, $s_2 = 1/2+it_2$ above. 
Writing $ar=h$, and estimating the sum over the free variable $j$, we may simplify our bound to 
\begin{equation} 
\label{E_exp3} 
{\mathcal EG}_k(Q;X) \ll  \sum_{h \lambda \ll X/Q} \frac{X^{1+\epsilon}}{h\lambda} \sum_{\substack{\chi \Mod h \\ \chi\neq{\chi_0}}}  \int_{-\infty}^\infty \int_{-\infty}^\infty |L(s_1,\chi)|^k |L(s_2,\chi)|^k \Big|\widetilde{\mathcal{W}}_2\Big(s_1,s_2; \frac{X}{h \lambda Q}\Big)\Big|\,dt_1 dt_2 . 
\end{equation} 
%
We bound this quantity by breaking $h,\lambda$ and $t_1,t_2$ into dyadic intervals, making use of Proposition \ref{W2_bound} to bound $\widetilde{\mathcal{W}}_2$ and Proposition \ref{unconditional_moment} to bound moments of $L$-functions.

{Define $E(S):=\{t: S \leq t \leq 2S\}$ for $S > 1$ and $E(S) = [-1,1]$ for $S=1$.  We have for $S_1, S_2 \geq 1$ with $ S_2 \geq S_1$, and for $H$ and $L$ with $HL \ll X/Q$, 
\begin{multline}
\label{dyadic_bound}
\sum_{\substack{H \leq h \leq 2H \\ L \leq \lambda \leq 2L}} \frac{1}{\lambda h}
 \sum_{\substack{\chi \Mod h \\ \chi \neq \chi_0}} \int_{E(S_2)} \int_{E(S_1)} \big( |L(s_1,\chi)|^{2k} + |L(s_2,\chi)|^{2k}\big) \Big|\widetilde{\mathcal{W}}_2\Big(s_1,s_2; \frac{X}{h \lambda Q}\Big)\Big|\,dt_1 dt_2 \\
\ll_{p,\epsilon} \frac{1}{LH} (H^2 S_2 + (HS_2)^{k/2}) (HS_2)^\epsilon S_1 \frac{(X/HLQ)^{p-1}}{S_2^p} .
\end{multline}
We make use of this bound with $p=1$ for $S_2 \leq X/HLQ$ and $p = k/2+2$ otherwise. We return to \eqref{E_exp3}. From the symmetry between $s_1$ and $s_2$, and since $|L(s_1,\chi) L(s_2,\chi)|^k \ll |L(s_1,\chi)|^{2k} + |L(s_2,\chi)|^{2k}$, 
\begin{multline*}
\mathcal{EG}_k(Q;X) \ll_\epsilon X^{1+\epsilon} \sumd_{\substack{H,L \\HL \ll X/Q}} \frac{1}{L H}\bigg(\sumd_{\substack{S_1 \leq S_2\\ S_2 \leq X/HLQ}} (H^2 S_2 + (HS_2)^{k/2})(HS_2)^\epsilon S_1 \frac{1}{S_2} \\
+ \sumd_{\substack{S_1 \leq S_2 \\ S_2 > X/HLQ}}(H^2 S_2 + (HS_2)^{k/2})(HS_2)^\epsilon S_1 \frac{1}{S_2^{k/2+2}}\Big(\frac{X}{HLQ}\Big)^{k/2+1}\bigg),
\end{multline*}
where $\sumd$ indicates a dyadic sum over powers of $2$. By estimating these sums we obtain
$$
\mathcal{EG}_k(Q;X) \ll_\epsilon X^{1+\epsilon}(X/Q) + X^{1+\epsilon} (X/Q)^{k/2+\epsilon}.
$$
This is $O_\epsilon(X^{1+\epsilon} (X/Q)^{k/2})$, establishing the claim.}
\end{proof}

\begin{proof}[Proof of Lemma \ref{Ebound_conditional}]
We proceed almost as above, but instead of making use of the large sieve bound in Proposition \ref{unconditional_moment} we use the GRH bound of Proposition \ref{conditional_moments}. Applying this to the dyadic intervals in \eqref{dyadic_bound}, we obtain for the upper bound in the second line of that inequality,
$$
\ll_{p,\epsilon} \frac{1}{LH} H^2 S_2 (HS_2)^\epsilon S_1 \frac{(X/HLQ)^{p-1}}{S_2^p} .
$$
Using this in \eqref{E_exp3}, and evaluating the dyadic sum as before, we see
$$
\mathcal{EG}_k(Q;X) \ll_\epsilon X^{1+\epsilon}(X/Q),
$$
as claimed.
\end{proof}

\section{An asymptotic formula for $\Delta_k$}
\label{sec:final}

We summarize what we have shown. If $X^\delta \leq X \leq Q/\log \log Q)$, we have found an asymptotic formula for $\Delta_k(Q;X)$ in Lemma \ref{small_c}, namely
$$
\Delta_k(Q;X) \sim \widetilde{a}_k \frac{c^{k^2-1}}{(k^2-1)!} QX (\log Q)^{k^2-1},
$$
where $c:= \log X/\log Q$. 

On the other hand, if $Q/\log \log Q \leq X \leq Q^{\tfrac{k+2}{k}-\delta}$, from Lemmas \ref{diagonal}, \ref{off_diagonal_unconditional}, and \ref{Ebound_unconditional},
\begin{align*}
\Delta_k(Q;X) =& \mathcal{D}_k(Q;X) - \mathcal{B}_k(Q;X) + \mathcal{MG}_k + \mathcal{EG}_k\\
=& \widetilde{a}_k \Big(\frac{c^{k^2-1}}{(k^2-1)!} + P_k(c)\Big) QX (\log Q)^{k^2-1} + O_\epsilon(XQ (\log Q)^{k^2-2+\epsilon}),
\end{align*}
where recall $P_k$ is defined by \eqref{P_k}. Likewise from Lemmas \ref{diagonal}, \ref{off_diagonal_unconditional}, and \ref{Ebound_conditional}, on GRH this is true for $Q/\log \log Q \leq X \leq Q^{2-\delta}$.

Thus Theorems \ref{main_thm} and \ref{main_thm_GRH} will be established if only we demonstrate that
\begin{equation}
\label{gamma_k_01_prelim}
\gamma_k(c)  = c^{k^2-1}/(k^2-1)!,\quad \mathrm{for} \quad c \in (0,1],
\end{equation}
and
\begin{equation}
\label{gamma_k_12_prelim}
\gamma_k(c) = c^{k^2-1}/(k^2-1)! + P_k(c),\quad \mathrm{for} \quad c \in [1,2).
\end{equation}
The relation \eqref{gamma_k_01_prelim} is established in section 4.4.3 of \cite{KeRoRoRu}, and in the next section we prove 
\eqref{gamma_k_12_prelim}.  


\section{A comparison with $\gamma_k$ and remarks on $P_k$}
\label{sec:comparison}

\subsection{A RMT computation}
Our purpose is to demonstrate the identity 
\eqref{gamma_k_12_prelim}.
	


\begin{lemma}
	\label{gamma_k_poly}
	For $c\in [1,2)$,
	$$
	\gamma_k(c) = \frac{c^{k^2-1}}{(k^2-1)!} + P_k(c),
	$$
	where $P_k(c)$ is the polynomial given by \eqref{P_k}.
\end{lemma}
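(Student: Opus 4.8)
The plan is to evaluate the random-matrix integral defining $\gamma_k$ directly on the interval $[1,2)$ and to check that the result matches $c^{k^2-1}/(k^2-1)!+P_k(c)$; since both sides are polynomials on $[1,2)$, it suffices to identify them as polynomials.

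\textbf{Reducing $\gamma_k$ by inclusion--exclusion.} In the definition \eqref{piecewise_poly} I would remove the upper constraints $w_i\le 1$ by writing $\mathbf 1_{[0,1]}(w)=\mathbf 1_{w\ge 0}-\mathbf 1_{w\ge 1}$ and translating, in each term, the selected coordinates by $1$. This gives
\[
\gamma_k(c)=\sum_{S\subseteq\{1,\dots,k\}}\frac{(-1)^{|S|}}{k!\,G(k+1)^2}\int_{\mathbb{R}_{\ge 0}^k}\delta\Big(\sum_i u_i-(c-|S|)\Big)\Delta(u+\mathbf 1_S)^2\,d^k u,
\]
where $u+\mathbf 1_S$ denotes the point obtained from $u$ by adding $1$ to the coordinates indexed by $S$. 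The $S$-summand vanishes unless $c>|S|$, so for $c\in[1,2)$ only $S=\varnothing$ and the singletons survive: $\gamma_k(c)=I_0(c)+I_1(c)$ with $I_0(c)=\frac{1}{k!G(k+1)^2}\int_{\mathbb{R}_{\ge 0}^k}\delta(\sum u_i-c)\Delta(u)^2\,d^k u$ and $I_1(c)=-\frac{1}{k!G(k+1)^2}\sum_{j=1}^k\int_{\mathbb{R}_{\ge 0}^k}\delta(\sum u_i-(c-1))\Delta(u+e_j)^2\,d^k u$.

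\textbf{Evaluating the two pieces.} Taking Laplace transforms in $c$ and applying Andr\'{e}ief's identity, $\int_{\mathbb{R}_{\ge 0}^k}e^{-\xi\sum u}\Delta(u)^2\,d^k u=k!\det\big[(i+j-2)!\,\xi^{-(i+j-1)}\big]_{1\le i,j\le k}=k!\,G(k+1)^2\,\xi^{-k^2}$, the Hankel evaluation $\det[(i+j-2)!]_{1\le i,j\le k}=\prod_{j=1}^{k-1}(j!)^2=G(k+1)^2$ being classical; hence $I_0(c)=\Res_{\xi=0}e^{\xi c}\xi^{-k^2}=c^{k^2-1}/(k^2-1)!$, which re-derives and is consistent with the $c\in(0,1)$ formula of \cite{KeRoRoRu}. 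The lemma thus reduces to proving $I_1(c)=P_k(c)$ on $[1,2)$. Likewise $I_1(c)=-\frac{1}{k!G(k+1)^2}\Res_{\xi=0}\big[e^{\xi c}\sum_{j=1}^k\int_{\mathbb{R}_{\ge 0}^k}e^{-\xi\sum u}\Delta(u+e_j)^2\,d^k u\big]$; expanding $\Delta(u+e_j)^2$ as a polynomial in $u$ and integrating termwise, using $\int_0^\infty u^m e^{-\xi u}\,du=m!\,\xi^{-(m+1)}$ and $\int_0^\infty (u+1)^m e^{-\xi u}\,du=\sum_{\ell\le m}\binom m\ell\,\ell!\,\xi^{-(\ell+1)}$ for the shifted coordinate, turns the bracket into a finite Laurent polynomial in $\xi$ with binomial-coefficient weights, and so expresses $I_1(c)$ as an explicit alternating sum of products of binomial coefficients times powers of $c$.

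\textbf{Matching, and the main obstacle.} It then remains to expand the residue $P_k(c)$ of \eqref{P_k} into the same shape --- carrying out the $s_2$- and $s_1$-residues by expanding $e^{cs_i}$ and $(s_i-z)^k s_i^{-k}$ and resumming geometric series, then the $z$-residue --- obtaining the binomial-sum representation of $P_k$ that will be recorded in Proposition \ref{P_k_expand}, and to verify that the two binomial sums coincide. This last identification is the ``non-trivial combinatorial argument'' referred to in the introduction and is the real obstacle: structurally it is a CFKRS-type recipe computation, in which $s_1,s_2$ are the two ``block'' variables to which the $k$ copies of the shift (coming from the $\zeta^k\zeta^k$ structure of $d_k(m)d_k(n)$) collapse, and $z$ is the extra modulus/interval variable carrying the double pole at $0$ that arises --- exactly as in Step~3 of the proof of Lemma \ref{off_diagonal_unconditional} --- from the confluence of the pole of $\zeta(2-z)$ with the factor $2/(1-z)$ of Proposition \ref{Mellin_AC}; the actual work is the bookkeeping of binomial identities, signs, and the constant $G(k+1)^2$. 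As a normalization check one can take $k=2$: by $\gamma_k(c)=\gamma_k(k-c)$ one must then have $P_2(c)=(2-c)^3/6-c^3/6$ on $[1,2)$, which is readily verified directly from \eqref{P_k}, and more generally the computation above predicts leading coefficient $(1-k)/(k^2-1)!$ for $\gamma_k$ on $[1,2)$, matching the displayed $\gamma_2$ and $\gamma_3$.
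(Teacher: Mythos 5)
Your proposal takes a genuinely different route from the paper, but it has a real gap at the crucial step, and you yourself put your finger on it.

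\textbf{What you do differently, and what is fine.} You start from the explicit hypercube integral \eqref{piecewise_poly} for $\gamma_k$ and peel off the constraints $w_i\le 1$ by inclusion--exclusion, observing that for $c\in[1,2)$ only $S=\varnothing$ and the singletons survive. The $S=\varnothing$ piece $I_0(c)=c^{k^2-1}/(k^2-1)!$ you handle cleanly by Laplace transform, Andr\'eief, and the Hankel evaluation $\det[(i+j-2)!]_{i,j\le k}=G(k+1)^2$; this is correct (and indeed re-derives the $c\in(0,1)$ case). The consistency checks (the leading coefficient $(1-k)/(k^2-1)!$, the $k=2,3$ comparisons, the symmetry $\gamma_k(c)=\gamma_k(k-c)$) are all right and give genuine evidence. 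One small slip: in your Laplace-transform formula for $I_1(c)$ the residue should carry $e^{\xi(c-1)}$, not $e^{\xi c}$, since the delta constraint is $\sum u_i=c-1$; presumably a typo, and you do account for the shift implicitly in your later leading-coefficient check.

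\textbf{The gap.} The identity you have reduced the lemma to, namely $I_1(c)=P_k(c)$, is precisely the hard part, and the proposal stops here: you say ``the actual work is the bookkeeping of binomial identities, signs, and the constant $G(k+1)^2$'' and leave it undone. Expanding $\Delta(u+e_j)^2$ termwise gives one binomial sum; carrying out the three nested residues in \eqref{P_k} gives another (recorded in Proposition~\ref{P_k_expand}); establishing that these two rather different-looking alternating sums of products of binomial coefficients agree is a non-trivial identity that is not proved here and is not obviously a known one. The paper avoids this matching entirely by working from the other representation of $\gamma_k$: it applies the CFKRS autocorrelation formula (Theorem~\ref{CFKRS}) to $I_k(m;N)$, observes that for $N<m<2N$ only the $|S|=|T|\le 1$ terms contribute, and then scales $s_i\mapsto s_i'/N$, $z\mapsto z'/N$ and passes $N\to\infty$. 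The $|S|=|T|=0$ term limits to $c^{k^2-1}/(k^2-1)!$ and the $|S|=|T|=1$ terms limit \emph{directly} to the triple-residue expression \eqref{P_k2} defining $P_k$, so there is nothing to match. That is the point of the ``residue-friendly'' CFKRS route: the same decomposition (by $|S|$, i.e.\ by the number of shifted coordinates) appears in both your argument and the paper's, but the paper's starting point produces $P_k$ in exactly the form it is defined, whereas yours produces it in a different form and would still require the combinatorial bridge.

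In short: the approach is sound in conception and could in principle be pushed to a proof, but as written it is a plan with the decisive step missing, and that step is not routine.
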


It will be convenient for us to deal with $P_k$ as a multiple residue, rather than a more traditionally written polynomial (though we do record a more traditional expression for it in the next subsection). 

We note that \eqref{P_k} may be rewritten as
\begin{equation}
\label{P_k2}
P_k(c) = - \Res_{z'=0} \Res_{\substack{s_1=0 \\ s_2=0}} e^{cz'} e^{s_1+s_2-z'}\frac{(s_1-z')^k (s_2-z')^k}{(z')^{k^2}s_1^k s_2^k (s_1+s_2-z)^2}.
\end{equation}
We outline a proof of this identity, leaving details to the reader. Begin by writing \eqref{P_k} as a multiple contour integral, with the contour in the $z$ variable containing in its interior the sumset of the contours over $s_1$ and $s_2$. 
It is now possible to swap the order of integration, and make a change of variable $z' = s_1 + s_2 -z$, and then swap the order of integration again.  The right side of  \eqref{P_k2} is the resulting residue. This change of variables will prove useful for us later.

Though the piecewise polynomial $\gamma_k(c)$ is succinctly expressed by the integral \eqref{piecewise_poly}, in proving Lemma \ref{gamma_k_poly} it will be useful to return to $\gamma_k$'s origins in random matrix theory over the unitary group. As usual, we let $U(N)$ be the $N\times N$ group of unitary matrices, endowed with Haar probability measure $dg$.

By expanding characteristic polynomials, it is apparent that the integral
$$
\mathcal{I}:= \int_{U(N)} \det(1-x g)^k \det(1-g^{-1})\, dg
$$
is a polynomial of degree $kN$; that is
$$
\mathcal{I} = \sum_{m=0}^{kN} I_k(m;N) x^m,
$$
with coefficients $I_k(m;N)$ defined by this relation. In \cite{KeRoRoRu}, it is shown (Theorem 1.5) that as $N\rightarrow \infty$,
\begin{equation}
\label{I_K_asymptotic}
I_k(m;N) = \gamma_k(m/N) N^{k^2-1} + O(N^{k^2-2}),
\end{equation}
the estimate uniform in $m$.

We will prove Lemma \ref{gamma_k_poly} by finding a formula for $I_k(m;N)$ and taking the limit $N\rightarrow \infty$. A formula of \cite{CoFaKeRuSn} is the starting point. 

\begin{theorem}[Conrey-Farmer-Keating-Rubinstein-Snaith]
\label{CFKRS}
For $A$ and $B$ finite collections of complex numbers,
\begin{equation}
\label{autocorrelation}
\int_{U(N)} \prod_{\alpha\in A} \det(1-\alpha g) \prod_{\beta \in B} \det(1-\beta g^{-1})\,dg = \sum_{\substack{S \subseteq A \\ T \subseteq B  \\ |S| = |T|}} \prod_{\alpha\in S} \alpha^N \prod_{\beta\in T} \beta^N Z(\overline{S}+ T^-\,,\, \overline{T} + S^-),
\end{equation}
where $\overline{S} := A\setminus S$, $\overline{T} := B\setminus T$, $S^- := \{-\alpha:\, \alpha \in S\}$, $T^- := \{-\beta:\, \beta \in T\}$, and for any finite collections of complex numbers $A$ and $B$,
$$
Z(A,B):= \prod_{\substack{\alpha\in A\\ \beta \in B}} \frac{1}{1-\alpha\beta}.
$$
\end{theorem}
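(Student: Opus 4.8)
The plan is to prove \eqref{autocorrelation} by the symmetric-function and orthogonality method, which converts the group integral into a truncated Cauchy sum that is then evaluated by a determinant identity. \textbf{Step 1 (reduction to a truncated Cauchy sum).} Let $x=(x_1,\dots,x_N)$ be the eigenvalues of $g$, so that $\det(1-\alpha g)=\prod_n(1-\alpha x_n)$ and $\det(1-\beta g^{-1})=\prod_n(1-\beta\overline{x_n})$. The dual Cauchy identity gives $\prod_n\prod_{\alpha\in A}(1-\alpha x_n)=\sum_{\lambda}s_\lambda(x)\,s_{\lambda'}(-A)$, the sum over partitions $\lambda$ inside the $N\times|A|$ box, where $s_{\lambda'}(-A)$ is the Schur polynomial in the variables $\{-\alpha:\alpha\in A\}$; similarly $\prod_n\prod_{\beta\in B}(1-\beta\overline{x_n})=\sum_{\mu}\overline{s_\mu(x)}\,s_{\mu'}(-B)$. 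On $U(N)$ the Schur polynomial $s_\lambda$ is the character of the irreducible representation of highest weight $\lambda$ (when $\ell(\lambda)\le N$), so $\int_{U(N)}s_\lambda(g)\overline{s_\mu(g)}\,dg=\delta_{\lambda\mu}$, and integrating term by term, after reindexing $\nu=\lambda'$ (the length constraint being automatic, since Schur polynomials in $m$ variables vanish on partitions with more than $m$ parts), yields
\[ \int_{U(N)}\prod_{\alpha\in A}\det(1-\alpha g)\prod_{\beta\in B}\det(1-\beta g^{-1})\,dg=\sum_{\nu:\,\nu_1\le N}s_\nu(-A)\,s_\nu(-B). \]
Without the restriction $\nu_1\le N$ this would be $Z(A,B)$ by the ordinary Cauchy identity, which is exactly the $S=T=\emptyset$ term of the right-hand side of \eqref{autocorrelation}; the restriction generates the remaining terms.

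\textbf{Step 2 (evaluating the truncated sum).} It remains to equate the displayed sum with the right-hand side of \eqref{autocorrelation}. I would first treat the case $|A|=|B|=k$. Writing each Schur polynomial as a bialternant and substituting $\mu_j=\nu_j+k-j$, the sum becomes $\Delta(-A)^{-1}\Delta(-B)^{-1}$ times a sum of $\det\big((-\alpha_i)^{\mu_j}\big)\det\big((-\beta_i)^{\mu_j}\big)$ over strictly decreasing $k$-tuples $\mu$ drawn from $\{0,\dots,N+k-1\}$; by the Cauchy--Binet formula this collapses to
\[ \sum_{\nu:\,\nu_1\le N}s_\nu(-A)\,s_\nu(-B)=\frac{1}{\Delta(-A)\,\Delta(-B)}\det\!\Big(\frac{1-(\alpha_i\beta_j)^{N+k}}{1-\alpha_i\beta_j}\Big)_{1\le i,j\le k}. \]
Splitting each entry as $\tfrac{1}{1-\alpha_i\beta_j}-(\alpha_i\beta_j)^{N+k}\tfrac{1}{1-\alpha_i\beta_j}$, expanding the determinant by multilinearity over the rows, and using $-\tfrac{\alpha_i\beta_j}{1-\alpha_i\beta_j}=\tfrac{1}{1-(\alpha_i\beta_j)^{-1}}$ to recast the ``swapped'' contributions, each resulting minor is a Cauchy-type determinant; applying the Cauchy determinant evaluation, the Vandermonde prefactors cancel and one is left with a sum over a pair of equal-size subsets — of the $\alpha$'s and of the $\beta$'s — the swapped variables entering through their inverses and carrying the powers $\alpha^{N}$, $\beta^{N}$, which upon matching conventions is precisely the right-hand side of \eqref{autocorrelation}.

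\textbf{General case and analyticity.} The case $|A|\neq|B|$ follows from the equal-size case by specialization: adjoin auxiliary variables to the shorter list, apply the identity, and let the auxiliary variables tend to a value that annihilates their contribution, both sides being rational functions of all parameters. Throughout one works first in the region $\max_\alpha|\alpha|\cdot\max_\beta|\beta|<1$, where every series converges absolutely; both sides of \eqref{autocorrelation} are rational in the $\alpha$'s and $\beta$'s with poles only along hyperplanes of the form (a product of two parameters) $=1$, so the identity propagates by analytic continuation, and this continuation is also what gives meaning to the $Z$-factors appearing with the swapped arguments.

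\textbf{Main obstacle and alternatives.} The genuine content is Step 2: performing the determinant expansion and the Cauchy-determinant bookkeeping so as to recover \emph{exactly} the subset sum in \eqref{autocorrelation}, with all signs and the factors $\alpha^N,\beta^N$ in their correct places, and then passing cleanly to the case $|A|\neq|B|$; Step 1 is standard representation theory and the analyticity is routine. Alternatively, one may follow \cite{CoFaKeRuSn} more directly: express the left-hand side as a multiple contour integral via the Weyl integration formula and Andr\'eief's identity (which presents it as an $N\times N$ Toeplitz determinant with a rational symbol), and extract \eqref{autocorrelation} by deforming contours and collecting residues; or one may characterize both sides as the unique rational function symmetric in $A$ and in $B$, with the prescribed poles and the correct degree, satisfying the functional equations that stem from $\det(1-\alpha g)=(-\alpha)^N\det(g)\det(1-\alpha^{-1}g^{-1})$, and having the known limit $Z(A,B)$ as $N\to\infty$.
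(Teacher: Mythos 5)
The paper does not actually prove Theorem \ref{CFKRS}: it is quoted from \cite{CoFaKeRuSn}, whose own derivation goes through the Weyl integration formula and contour-integral/residue manipulations. Your route via the dual Cauchy identity and Schur-function orthogonality (essentially the Bump--Gamburd method) is therefore a different and perfectly legitimate approach, and your Step 1 is sound: the reduction to the truncated sum $\sum_{\nu:\nu_1\le N}s_\nu(-A)s_\nu(-B)$, and (for $|A|=|B|=k$) its Cauchy--Binet evaluation as $\Delta(-A)^{-1}\Delta(-B)^{-1}\det\bigl((1-(\alpha_i\beta_j)^{N+k})/(1-\alpha_i\beta_j)\bigr)$, are correct, as is the specialization argument for $|A|\neq|B|$ (adjoining zeros) and the analytic continuation remark.

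There are, however, two concrete gaps in Step 2, which you yourself identify as the genuine content. First, splitting each entry and expanding the determinant multilinearly (over rows, say) indexes the terms by only \emph{one} subset, and the resulting mixed determinants are \emph{not} Cauchy determinants: after pulling $\alpha_i^{N+k}$ out of the chosen rows, the leftover factor $\beta_j^{N+k}$ sits only in those rows and cannot be extracted column-wise. What is missing is a Laplace expansion of each mixed determinant along the chosen rows (or columns); it is this step that generates the \emph{second} subset, splits the matrix into two complementary blocks which, after removing the monomial factors, really are Cauchy determinants, and allows the Vandermonde bookkeeping (which must also convert the powers $\alpha^{N+k}\beta^{N+k}$ into $\alpha^N\beta^N$) to be carried out with the correct signs. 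As written, the proposal stops just short of the identity it is meant to establish. Second, the phrase ``upon matching conventions'' conceals a real discrepancy: your manipulation produces the swapped variables through their \emph{inverses}, i.e.\ the factor $Z(\overline{S}+\{\beta^{-1}:\beta\in T\},\,\overline{T}+\{\alpha^{-1}:\alpha\in S\})$, and that is in fact the correct statement. With the literal definition $S^-:=\{-\alpha\}$ in the theorem as printed, the identity already fails for $|A|=|B|=1$ and $N=1$: the left side of \eqref{autocorrelation} is $1+\alpha\beta$, while the right side would be $(1+\alpha\beta)/(1-\alpha\beta)$. The superscript minus must be read as inversion --- exactly as the paper's own application of the theorem in deriving \eqref{A_2} does, where the swapped factors appear as $(1-e^{-\epsilon_\mu+\epsilon_i})^{-1}$, $(1-e^{-\delta_\nu+\delta_j})^{-1}$ and $(1-x^{-1}e^{-\epsilon_\mu-\delta_\nu})^{-1}$ --- so you should state and prove that corrected version rather than appeal to convention-matching.
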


\begin{remark}
Following the convention of this area, we use the notation $\overline{S}+T^{-}$ to mean appending the list $\overline{S}$ to the list $T^{-}$. So $\{1,4,4\}+\{2,4,5\} = \{1,2,4,4,4,5\}$.
\end{remark}

\begin{remark}
The singularities that might at first appear to occur on the right hand side of \eqref{autocorrelation} are known to be removable. Terms in this sum that grow arbitrarily large for certain choices of $A$ and $B$ cancel out with other terms for such $A,B$. (Indeed, this is evident from the left hand side.)
\end{remark}

\begin{remark}
The method of proof we take closely resembles that of section 4.2 of \cite{KeRoRoRu}, though our approach differs somewhat in its specifics, since our goal is to match up with the residue defining $P_k$.
\end{remark}

\begin{proof}[Proof of Lemma \ref{gamma_k_poly}]
Our proof proceeds in two steps.

\emph{Step 1:} By making use of Theorem \ref{CFKRS}, we will represent $I_k(m;N)$ as a residue: for $N < m < 2N$,
\begin{equation}
\label{step1.1}
I_k(m;N) = -\Res_{\zeta=1} \zeta^{-(m+1)} f_N(\zeta),
\end{equation}
with
\begin{multline}
\label{step1.2}
f_N(\zeta):= \frac{1}{(1-\zeta)^{k^2}} - \frac{1}{(2\pi i)^2} \int_{|s_1|=1/10} \int_{|s_2|=1/10} \frac{\zeta^{N+1} e^{(N+1)(s_1+s_2)}}{(1-\zeta)^{k^2}} \Big(\frac{1-\zeta e^{s_2}}{1-e^{-s_1}}\Big)^k \\
\times \Big(\frac{1-\zeta e^{s_1}}{1-e^{-s_2}}\Big)^k \frac{ds_1 ds_2}{(1-\zeta e^{s_1+s_2})^2}.
\end{multline}

\emph{Step 2:} By analyzing this residue in the limit $m,N \rightarrow\infty$ with $m/N \rightarrow c \in [1,2)$, and using
\begin{equation}
\label{step2.0}
\gamma_k(c) = \lim \frac{1}{N^{k^2-1}} I_k(m;N),
\end{equation}
which clearly follows from \eqref{I_K_asymptotic}, we are able to verify Lemma \ref{gamma_k_poly}.

We turn to \textbf{Step 1}. We introduce the polynomial
\begin{align*}
\mathcal{I}^{(\epsilon,\delta)}&:= \int_{U(N)} \prod_{i=1}^k \det(1-x e^{\epsilon_i} g) \prod_{j=1}^k \det(1-e^{\delta_i} g^{-1})\, dg \\
&= \sum_{m=0}^{kN} I_k^{(\epsilon,\delta)}(m;N) x^m,
\end{align*}
with the coefficients $I_k^{(\epsilon,\delta)}(m;N)$ defined by this relation. Note that for fixed $m$ and $N$,
\begin{equation}
\label{I_limit}
\lim_{\epsilon, \delta \rightarrow 0} I_k^{(\epsilon,\delta)}(m;N) = I_k(m;N).
\end{equation}
By applying Theorem \ref{CFKRS}, for $\epsilon_i$ and $\delta_j$ all distinct (distinct so that we need not worry about the analysis of any removable singularities in the right-hand side of \eqref{autocorrelation}) we have
\begin{equation}
\label{diag_and_offdiag}
\mathcal{I}^{(\epsilon,\delta)} = A_1^{(\epsilon,\delta)}(x) + A_2^{(\epsilon,\delta)}(x) + O_{\epsilon,\delta}(x^{2N}),
\end{equation}
for sufficiently small $x$, where
$$
A_1^{(\epsilon,\delta)}(x):= \prod_{i,j=1}^k \frac{1}{1-x e^{\epsilon_i+\delta_j}},
$$
which corresponds to the term $S=T=\emptyset$ in the summation on the right hand side of \eqref{autocorrelation}, and
\begin{align*}
A_2^{(\epsilon,\delta)}(x) :=& \sum_{\mu,\nu=1}^k x^N e^{N(\epsilon_\mu+ \delta_\nu)} \Big(\prod_{\substack{i\neq \mu\\ j \neq \nu}} \frac{1}{1-x e^{\epsilon_i + \delta_j}}\Big)  \Big(\prod_{i\neq \mu} \frac{1}{1-e^{-\epsilon_\mu+\epsilon_i}}\Big) \\ &\hspace{60mm}\times\Big(\prod_{j\neq \nu} \frac{1}{1-e^{-\delta_\nu + \delta_j}}\Big) \Big(\frac{1}{1-x^{-1}e^{-\epsilon_\mu-\delta_\nu}}\Big) \\
=& - \sum_{\mu,\nu=1}^k x^{N+1} e^{(N+1)(\epsilon_\mu+\delta_\nu)} \Big(\prod_{i,j} \frac{1}{1-x e^{\epsilon_i + \delta_j}}\Big)\Big(\prod_{i\neq \mu} \frac{1-x e^{\epsilon_i +\delta_\nu}}{1-e^{-\epsilon_\mu+\epsilon_i}}\Big) \\ &\hspace{60mm}\times \Big(\prod_{j\neq \nu} \frac{1-x x^{\delta_j +\epsilon_\mu}}{1-e^{-\delta_\nu + \delta_j}}\Big)\Big(\frac{1}{1-x e^{\epsilon_\mu+\delta_\nu}}\Big)^2
\end{align*}
which corresponds to the terms for which $|S|=|T|=1$ in the summation of \eqref{autocorrelation}; these terms arise from $S = \{\epsilon_\mu\}$ and $T=\{\delta_\nu\}$ for $1\leq \mu, \nu \leq k$. The error term $O_{\epsilon,\delta}(x^{2N})$ in \eqref{diag_and_offdiag} is $x^{2N} g_{\epsilon,\delta}(x)$, where $g_{\epsilon,\delta}(x)$ is an analytic function in $x$ around the origin, arising because all remaining terms in the summation in \eqref{autocorrelation} have $|S| \geq 2$ and this produces for $j \geq 2N$ a summand with $x^j$ multiplied by a rational function analytic around $x=0$.

We note that $A_2$ has a further simplification. For $\epsilon_i$ and $\delta_j$ always distinct and $|\epsilon_i|, |\delta_j| \leq 1/10$ for all $i,j$, the reader should check that we have by a computation of residues
\begin{multline}
\label{A_2}
A_2^{(\epsilon,\delta)}(x) = - \frac{1}{(2\pi i)^2} \int_{|s_1|=1/10}\int_{|s_2|=1/10} \frac{x^{N+1} e^{(N+1)(s_1+s_2)}}{\prod_{i,j} (1-x e^{\epsilon_i+\delta_j})} \Big(\prod_i \frac{1- x e^{\epsilon_i + s_2}}{1- e^{\epsilon_i - s_1}} \Big)\\
\times \Big(\prod_j \frac{1 - x e^{\delta_j +s_1}}{1- e^{\delta_j -s_2}} \Big)\frac{1}{(1- x e^{s_1+s_2})^2}\, ds_1 ds_2,
\end{multline}
so long as $|x|$ is sufficiently small that the final term $(1- x e^{s_1+s_2})^{-2}$ contributes no residue ($|x| < e^{-2/10}$ suffices). (Thus the poles inside this contour are exactly those at $s_1 = \epsilon_1,...,\epsilon_k$ and $s_2 = \delta_1,...,\delta_k$. Taking contours of radius, say, $1/10$ ensures that e.g. $\epsilon_1 + 2\pi i$ is not also a pole.)
	
From \eqref{diag_and_offdiag} then for $m\leq 2N$,
$$
I_k^{(\epsilon,\delta)}(m;N) = \frac{1}{2\pi i} \int_{|\zeta|=R} \zeta^{-(m+1)} \big(A_1^{(\epsilon,\delta)}(\zeta) + A_2^{(\epsilon,\delta)}(\zeta)\big)\,d\zeta,
$$
with $R$ a parameter chosen sufficiently small (i.e. $R < e^{-2/10}$). Taking a limit as $\epsilon,\delta\rightarrow 0$ along paths such that all $\epsilon_i, \delta_j$ remain distinct, we see from \eqref{I_limit} and uniform convergence (both on the contour $|\zeta|=R$ and in the integral representation \eqref{A_2} for $A_2^{(\epsilon,\delta)}$) that
\begin{equation}
\label{contour_I}
I_k(m;N) = \frac{1}{2\pi i} \int_{|\zeta|=R} \zeta^{-(m+1)} f_N(\zeta)\,d\zeta,
\end{equation}
for $m < 2N$, where $f_N(\zeta)$ is defined by \eqref{step1.2}.
	
By inspection, one sees that $f_N(\zeta)$ is meromorphic with a singularity only at $\zeta=1$, and for $\zeta$ of large moduli $|f_N(\zeta)| = O(|\zeta|^N).$ Hence for $N < m < 2N$ and $R > 1$,
\begin{align*}
I_k(m;N) &= \frac{1}{2\pi i} \int_{|\zeta|=R} \zeta^{-(m+1)} f_N(\zeta)\,d\zeta - \Res_{\zeta = 1} \zeta^{-(m+1)} f_N(\zeta) \\
& = - \Res_{\zeta=1} \zeta^{-(m+1)}f_N(\zeta),
\end{align*}
with the second line following by letting $R\rightarrow\infty$. This proves the claim of Step 1.

We turn to \textbf{Step 2}. We let the residue just obtained in Step 1 be the result of a contour integral; making the change of variable $z = 1-\zeta$, we have for $N < m < 2M$,
\begin{equation}
\label{step2.1}
I_k(m;N) = \frac{1}{2\pi i} \int_{|z| = r} (1-z)^{-(m+1)} f_N(1-z)\,dz,
\end{equation}
for any $r < 1$. 

Note that the integrand in the definition \eqref{step1.2} of $f_N(\zeta)$ is analytic in $s_1$ for $s_1\neq 0$ in the disc $|s_1|<1/10$, and likewise for $s_2$. We can therefore replace the contours $|s_1|=1/10$, $s_2| = 1/10$ by $|s_1| = \rho$, $|s_2| = \rho$ for any $\rho < 1/10$. We set $r = 1/N$ in \eqref{step2.1} and take the contours $|s_1|=1/N$, $|s_2|=1/N$ in the representation \eqref{step1.2} of $f_N$. By making the re-scaling change of variables $z' = Nz$, $s_1' = N s_1$, $s_2' = Ns_2$, we see that
\begin{align*}
I_k(m;N) =& \frac{N^{k^2-1}}{2\pi i} \int_{|z'|=1} \frac{1}{\Big(1- \frac{z'}{N}\Big)^{(m+1)}} \Bigg[ \frac{1}{(z')^{k^2}} - \frac{1}{2\pi i} \int_{|s_1'| = 1} \int _{|s_2'|=1} \frac{\Big(1-\frac{z'}{N}\Big)^{(N+1)} e^{(1+1/N)(s_1+s_2)}}{(z')^{k^2}}\\
&\quad \times \Big(\frac{s_1'-z'}{s_2'} + O(1/N)\Big)^k \Big(\frac{s_2'-z'}{s_1'} + O(1/N)\Big)^k \frac{ds_1' ds_2'}{(s_1' + s_2' - z' + O(1/N))^2}\Bigg]\,dz'.
\end{align*}
If $m,N\rightarrow\infty$ in such a way that $m/N \rightarrow c$, then by uniform convergence,
\begin{multline}
\label{nearly_done}
\lim \frac{I_k(m;N)}{N^{k^2-1}} = \frac{1}{2\pi i} \int_{|z'|=1} e^{cz'} \Bigg[ \frac{1}{(z')^{k^2}}\, -\, \frac{1}{(2\pi i)^2}\int_{|s_1'|=1}\int_{|s_2'|=1} \frac{e^{s_1'+s_2'-z'}}{(z')^{k^2}}\\
\times \frac{(s_1'-z')^k (s_2'-z')^k}{(s_1')^k (s_2')^k (s_1'+s_2'-z)^2}\, ds_1'ds_2'\Bigg]\,dz'. 
\end{multline}
Yet from \eqref{step2.0} the left hand side of \eqref{nearly_done} is $\gamma_k(c)$, while on the right hand side we note
$$
\frac{1}{2\pi i} \int_{|z'|=1} \frac{e^{cz'}}{(z')^{k^2}}\,dz' = \frac{c^{k^2-1}}{(k^2-1)!}.
$$
Using \eqref{P_k2} to extract $P_k(c)$ from what remains, we  have verified the lemma.
\end{proof}

\subsection{An expansions of $P_k$}

Though it is was not necessary for our purposes, we record an expansion of the polynomial $P_k(c)$ in a more traditional form that may be of interest. 

\begin{proposition}
\label{P_k_expand}  With $\binom{m}{i_1,i_2,i_3}$ denoting the multinomial coefficient, we have 
\begin{multline*}
P_k(c) = \frac{(-1)^k}{(k^2-1)!}\sum_{\substack{a,b \geq 0 \\ \alpha,\beta \geq 0}} (-1)^{a+b+\alpha+\beta} c^{a+b}(1-c)^{k^2-1-a-b} \binom{k^2-1}{a,b,k^2-1-a-b} \\ \times \binom{k^2-1+\alpha+\beta}  {\alpha,\beta,k^2-1}\binom{k}{a+\alpha+1}\binom{k}{b+\beta+1}.
\end{multline*}
\end{proposition}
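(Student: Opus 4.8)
The plan is to start from the residue representation \eqref{P_k} of $P_k(c)$ (the equivalent form \eqref{P_k2} would serve just as well) and to evaluate the three residues by expanding every factor of the integrand into a Taylor/Laurent series about the origin and reading off a single coefficient. Concretely, one writes $e^{z}e^{c(s_1+s_2-z)}=e^{(1-c)z}e^{cs_1}e^{cs_2}$ and expands each exponential by its Taylor series; expands the polynomials $(s_1-z)^k$ and $(s_2-z)^k$ by the binomial theorem; and expands
\[
(s_1+s_2-z)^{-k^2}=(-1)^{k^2}\sum_{n\ge 0}\binom{k^2-1+n}{n}\frac{(s_1+s_2)^n}{z^{k^2+n}},
\]
followed by a further binomial expansion of $(s_1+s_2)^n$. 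Since $\Res_{z=0}$, $\Res_{s_1=0}$, $\Res_{s_2=0}$ (in the presence of the denominators $z^2$, $s_1^k$, $s_2^k$) amount to extracting the coefficient of $z^{-1}s_1^{-1}s_2^{-1}$ from the resulting product of series, the whole computation reduces to a sum over the expansion indices subject to three linear constraints.

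I would label the indices as follows: $a$ and $b$ are the exponents of $s_1$ and $s_2$ produced by $e^{cs_1}$ and $e^{cs_2}$ (with weights $c^{a}/a!$ and $c^{b}/b!$); $d$ is the exponent of $z$ produced by $e^{(1-c)z}$ (with weight $(1-c)^{d}/d!$); $a_1,b_1$ are the binomial-theorem indices of $(s_1-z)^k$ and $(s_2-z)^k$ (with weights $\binom{k}{a_1}(-1)^{k-a_1}$ and $\binom{k}{b_1}(-1)^{k-b_1}$, carrying $z^{k-a_1}$ and $z^{k-b_1}$); and $\alpha,\beta$ are the exponents of $s_1,s_2$ in $(s_1+s_2)^n$, so $n=\alpha+\beta$ and the split carries $\binom{\alpha+\beta}{\alpha}$. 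Matching powers: the $s_1$-coefficient forces $a+a_1+\alpha=k-1$, the $s_2$-coefficient forces $b+b_1+\beta=k-1$, and then the $z$-coefficient forces $d=k^2-1-a-b$ (a one-line arithmetic using the first two relations). Hence $a_1=k-1-a-\alpha$ and $b_1=k-1-b-\beta$ are determined, and the sum may be taken over all $a,b,\alpha,\beta\ge 0$, since $\binom{k}{a_1}$ vanishes once $a+\alpha\ge k$ (likewise $\binom{k}{b_1}$), which moreover forces $d\ge 0$.

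Collecting all the factors, the summand becomes
\[
(-1)^{k^2}\,(-1)^{k-a_1}(-1)^{k-b_1}\,\frac{c^{a+b}(1-c)^{k^2-1-a-b}}{a!\,b!\,(k^2-1-a-b)!}\,\binom{k}{a_1}\binom{k}{b_1}\,\binom{k^2-1+\alpha+\beta}{\alpha+\beta}\binom{\alpha+\beta}{\alpha},
\]
and the proof concludes by four elementary simplifications: $(-1)^{k^2}=(-1)^k$; the parity relation $a_1+b_1\equiv a+b+\alpha+\beta\pmod 2$ (immediate from $a_1=k-1-a-\alpha$, $b_1=k-1-b-\beta$), which turns $(-1)^{k-a_1}(-1)^{k-b_1}$ into $(-1)^{a+b+\alpha+\beta}$; the symmetry $\binom{k}{a_1}=\binom{k}{k-1-a-\alpha}=\binom{k}{a+\alpha+1}$ (valid including the degenerate cases where both sides vanish), and similarly for $b_1$; the multinomial identity $\binom{k^2-1+\alpha+\beta}{\alpha+\beta}\binom{\alpha+\beta}{\alpha}=\binom{k^2-1+\alpha+\beta}{\alpha,\beta,k^2-1}$; and $1/(a!\,b!\,(k^2-1-a-b)!)=\binom{k^2-1}{a,b,k^2-1-a-b}/(k^2-1)!$. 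Substituting these gives precisely the asserted formula for $P_k(c)$.

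This is a mechanical computation, and I do not expect any genuine obstacle beyond careful bookkeeping of exponents and signs. The one point meriting a line of justification is that the iterated residue in \eqref{P_k} is legitimately evaluated by the term-by-term expansion above — that is, that the expansions chosen are compatible with the nesting of contours implicit in \eqref{P_k} (for instance taking $|s_1|,|s_2|\ll|z|\ll 1$, so that the geometric series in $(s_1+s_2)/z$ converges on the contours). This is the same manipulation already used to pass from \eqref{P_k} to \eqref{P_k2} and involves no new idea.
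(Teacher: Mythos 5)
Your proposal is correct and follows essentially the same route the paper sketches: rewrite the integrand of \eqref{P_k} with the factors $(1-z/s_1)^k$, $(1-z/s_2)^k$, $(1-(s_1+s_2)/z)^{-k^2}$, expand everything in Taylor/Laurent series (on nested contours $|s_1|,|s_2|\ll|z|\ll 1$), and extract the coefficient of $z^{-1}s_1^{-1}s_2^{-1}$; your index bookkeeping, sign computation, and reduction to multinomial coefficients all check out. The only point worth flagging is that the display accompanying the paper's one-line sketch has dropped the factor $1/z^2$ from \eqref{P_k}; your version correctly retains it, and it is needed to get the exponent $d=k^2-1-a-b$ and hence the stated formula.
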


The computation follows (with some book-keeping) by writing
$$
P_k(c) = \Res_{z=0} \frac{(-1)^{k^2}}{z^{k^2}} e^{(1-c)z} \Res_{\substack{s_1 = 0 \\ s_2 = 0}} e^{c s_1} e^{c s_2} \Big(1-\frac{z}{s_1}\Big)^k \Big(1-\frac{z}{s_2}\Big)^k\Big(1-\frac{s_1+s_2}{z}\Big)^{-k^2},
$$
and, in order to compute residues, expanding each of $(1-z/s_1)^k$, $(1-z/s_2)^k$, and $(1-(s_1+s_2)/z)^{-k^2}$ into powers of $z/s_1$, $z/s_2$ and $(s_1+s_2)/z$ respectively.

\end{document}